\newtheorem{thm}{Theorem}[section]
\newtheorem{prop}[thm]{Proposition}
\newtheorem{coro}[thm]{Corollary}
\newtheorem{lem}[thm]{Lemma}
\newtheorem{remark}[thm]{\textit{Remark}}
\newtheorem{defn}[thm]{Definition}
\newcommand{\beq}{\begin{equation}}
\newcommand{\eeq}{\end{equation}}
\newcommand{\ep}{{\epsilon}}
\newcommand{\del}{\delta}
\newcommand{\Om}{\Omega}
\def\R{\mathbb{R}}
\def\bx'{{\bar{\bm x}}}
\def\bby{{\bar{\bm y}}}
\def\bs{{\bf s}}
\def\bx{{\bf x}}
\def\by{{\bf y}}
\newcommand{\bdx}{{\bf x}}
\newcommand{\bdy}{{\bf y}}
\newcommand{\bds}{{\bf s}}
\newcommand{\bdz}{{\bf z}}
\newcommand{\bdw}{{\bf w}}
\def\Nw{\mathfrak{W}^{s,p}_{\vartheta}}
\def\Wf{\mathfrak{W}^{s,p}}
\newcommand{\dist}{\mathrm{dist}}
\title[Fractional Hardy-type and trace theorems for nonlocal function spaces]{Fractional Hardy-type and trace theorems for nonlocal function spaces with heterogeneous localization}
\author{Qiang Du}
\address{Department of Applied Physics and Applied Mathematics, Columbia University, New York, NY
10027}
\email{qd2125@columbia.edu}
\author{Tadele Mengesha}
\address{Department of Mathematics, The University of Tennessee, Knoxville, TN 37996}
\email{mengesha@utk.edu}
\author{Xiaochuan Tian}
\address{Department of Mathematics,
University of California, San Diego,  CA 92093}
\email{xctian@ucsd.edu}
\thanks{Qiang Du's research is supported in part by the NSF DMS-2012562,  DMS-1937254
and  ARO MURI Grant W911NF-15-1-0562.
Tadele Mengesha's research is supported by NSF DMS-1910180. Xiaochuan Tian's research is supported in part by NSF DMS-2044945 and DMS-2111608.}
\begin{document}

\begin{abstract}  
 {This work aims to prove a Hardy-type inequality and a trace theorem for a class of
function spaces on smooth domains with a nonlocal character. Functions in these spaces are  {allowed to be} as  rough as   {an $L^p$-function} inside the domain of definition but as smooth as a $W^{s,p}$-function near the boundary. This feature is captured by a norm that  is characterized by  a 
nonlocal interaction kernel defined heterogeneously with a special 
localization feature on the boundary.
Thus, the trace theorem we obtain here can be viewed as an improvement and refinement of the classical trace
theorem for fractional Sobolev spaces $W^{s,p}(\Omega)$.
Similarly, the Hardy-type inequalities we establish for functions  that vanish on the boundary  show that  functions in this generalized space have the same decay rate to the boundary as functions in the smaller space $W^{s,p}(\Om)$. 
The results we prove extend existing results shown in the Hilbert space setting with $p=2$. A Poincar\'e-type inequality we establish for the function space
under consideration  together with the new trace theorem allow formulating and proving well-posedness of
a nonlinear nonlocal variational problem with conventional local boundary condition.}
\end{abstract}

\keywords{Nonlocal operator, nonlocal function space, trace map, trace inequality, Hardy inequality, vanishing horizon}

\subjclass[2010]{46E35, 35A23, 49N60, 47G10, 35Q74}

\maketitle

\section{Introduction and Main results}
\label{sec:intro}

In this paper, following \cite{TiDu2017}, we prove a fractional Hardy-type and trace theorems for functions in the space $\mathfrak{W}^{s,p}(\Omega)$  {that} we define as follows. Let $d\geq 2$, $\Omega\subset \mathbb{R}^{d}$ be an open set, bounded or unbounded, and $\partial \Omega$ representing its boundary which is assumed to have sufficient regularity. 
In the event $\Omega=\mathbb{R}^{d}_{+},$ the half space, or $\Omega=\R^{d}_{M^{+}}:=\R^{d-1}\times (0, M]$,  {then the boundary $\partial\Omega$ is $\mathbb{R}^{d-1}\times\{0\}$.}
Given $s\in (0, 1]$, $1\leq p < \infty$, and for any $u\in L^{p}(\Omega)$, let us introduce the notation $|u|_{\mathfrak{W}^{s,p}(\Omega)}$  defined as 
\[
|u|^{p}_{\mathfrak{W}^{s,p}(\Omega)} := \int_{\Omega} \int_{ {B_{\delta(\bdx)} (\bdx)}} \frac{|u(\by) - u({\bx })|^{p}}{|\delta (\bx)| ^{{\mu} }} d\bdy d\bdx
\]
where {$\mu=d+ps$} and $\delta(\bdx) =  {\dist}(\bdx, \partial \Om)$, and  the notation $B_{r} ({\bdz})$ denotes a ball of radius $r$ and  {centered} at $\bdz$.  It is clear that $|u|_{\mathfrak{W}^{s,p}(\Omega)}$ defines a seminorm. 
We take the function space $\mathfrak{W}^{s,p}(\Omega)$ to be the  {completion} of $C^{0, 1}_c(\overline{\Om})$ with respect to the norm 
\[
\|\cdot\|_{\mathfrak{W}^{s,p}(\Omega)}  =\left( \|\cdot\|_{L^{p}}^{p} + |\cdot|_{\mathfrak{W}^{s,p}(\Omega)}^{p}\right)^{1/p}.
\]
We also denote by the space $\mathfrak{\mathring{W}}^{s,p}(\Omega)$ as the  {completion} of $C^{0, 1}_c(\Om)$ with respect to the norm $\|\cdot\|_{\mathfrak{W}^{s,p}(\Omega)}$. In the above, $\overline{\Om}$ is the closure of $\Om$ and the function spaces $C_c^{0, 1}(\overline{\Omega})$ and $C_c^{0,1}(\Omega)$
 are the set of $C^{0,1}$ (Lipschitz) functions with compact support in $\overline{\Omega}$ and $\Omega$ respectively.  If $\Omega$ is bounded, then $C_c^{0, 1}(\overline{\Omega}) = C^{0, 1}(\overline{\Omega})$. Given a set $X$, the notation $C(X)$ represents the set of continuous function defined over $X$.
The function spaces $\mathfrak{W}^{s,p}(\Omega)$ and $\mathfrak{\mathring{W}}^{s,p}(\Omega)$ are Banach spaces with respect to the norm $\|\cdot\|_{\mathfrak{W}^{s,p}(\Omega)}$ 
as can be checked easily  {from their definition.}


 The main goal of this paper is to prove that functions in $\mathfrak{W}^{s,p}(\Omega)$ behave exactly like functions in the fractional Sobolev space $W^{s, p}(\Omega)$ near the boundary.   
 We recall that 
  {the space $W^{s,p}(\Omega)$ consists of all $L^{p}(\Omega)$-function $u$ 
 that has a finite Gagliardo seminorm
 $|u|_{W^{s,p}(\Omega)}=\left\{\int_{\Omega}\int_{\Omega} \frac{|u(\by) - u({\bx })|^{p}}{|\by -\bx| ^{d+ps}}d\by d\bx\right\}^{1/p}$} 
  \cite{gag57}. To be precise, we show that  functions in $\mathfrak{W}^{s,p}(\Omega)$, subject to condition on $s$ and $p$,  have a well defined trace on $\partial \Omega$ and also support a Hardy-type inequality that  {quantifies} the decay rate of functions in $\mathfrak{W}^{s,p}(\Omega)$ that vanish on the boundary. What distinguishes functions in $\mathfrak{W}^{s,p}(\Omega)$ from functions in the Sobolev spaces is that these functions do not necessarily have any smoothness inside $\Omega$ and can be as rough as typical   {$L^{p}$-functions}.  Rather, the defining property of functions in $\mathfrak{W}^{s,p}(\Omega)$ comes from their regular behavior near the boundary $\partial \Omega$ and our main result of the paper captures that regularity. 

 The motivation for this line of research comes from the prevalence of nonlocal modeling which has become suitable to describe singular and discontinuous behavior in diffusion, image processing, mechanics of materials  and other application areas. 
Typically, nonlocal models are based on integration as opposed to differentiation so as to demand less regularity of their solutions \cite{AMRT10,Du19,DGLZ12SR,DGLZ13,GiOs08,KlSo05,LZOB10,MeDu14,MeDu15,MeKl00}.  As a consequence, the associated function spaces require less smoothness and exhibit a nonlocal character \cite{BBM01,CaSi07,ponce2004,ros15}. In some  applications, one is interested in modeling singular behavior inside the domain subject to some boundary conditions.  This is the case for peridynamics \cite{Silling00}, for example, where the interest is in modeling crack formation and fracture in deforming materials subject to some loading conditions on the lower dimensional boundary. 
We refer to \cite{SLS14} for a computational peridynamics model that uses  position-dependent interaction kernels. Similarly, coupling of nonlocal models with local models through co-dimensional one interfaces through heterogeneous localization has also been proposed \cite{TTD19}, These models naturally need to work with function spaces whose elements have well defined traces on the boundary and/or interfaces but also capable of capturing singularities inside the domain. 
Interest in this aspect of appropriately defined function spaces has  {been increasing} recently.  For example, recent works \cite{TiDu2017,Leoni-trace, Moritz-trace, Foss-trace,Foss-poincare,NNS16} deal with defining a bounded trace operator for functions that may not have classical differential regularity in the interior of the domain of definition. 
 We note that the study of  
function spaces with variable order of smoothness and growth is a popular subject with a rich history and significant recent interest;
see, for instance, \cite{Nakai10,Di04}.

The results shown in this paper are extensions of the study initiated in \cite{TiDu2017} and are in parallel to the well known trace theorems for classical function spaces, see \cite{Demengel,Le09} for  a reference  {on the latter}. 
In particular,  
for  {$\Om\subset \R^d$} with sufficiently smooth boundary $\partial\Om$, 
the trace operator $ {T}$ on $\partial \Om$ 
\[
 {T}u= u |_{\partial \Omega}\qquad \forall u \in C_c^{0,1}(\overline{\Om})\,,
\] 
can be uniquely  extended continuously as a map
from  $W^{s,p}(\Om)$,
the fractional Sobolev space of order $s \in (0, 1]$  and $1< p < \infty$ to the fractional Sobolev space 
$W^{s-\frac{1}{p}, p}(\partial \Om)$ on the boundary of $\Omega$, provided that $ps >1$. In addition,  the trace of a function in $W^{s,p}(\Om)$ is well defined over any smooth hypersurface contained in the domain $\Omega$. Functions in $W^{s,p}(\Om)$ are mildly regular and for that matter, in the event $ps > d$, they are H\"older continuous (\cite{Nezza}), which makes them less useful in the 
modeling   {of} singular behavior inside the domain
 {$\Om$}.    
Similar to the spaces first proposed in \cite{TiDu2017} (for the case of $p=2$), the function space  $\mathfrak{W}^{s,p}(\Omega)$, defined above,  combines the best of $W^{s,p}(\Om)$ on the boundary $\partial \Om$ and $L^{p}(\Omega)$ inside $\Omega$. The main result of this paper is to show that when $\Omega$ is a domain with sufficiently smooth boundary, the trace map exists  and 
is continuous from  $\mathfrak{W}^{s,p}(\Omega)$ to $W^{s-\frac{1}{p}, p }(\partial \Om)$ provided $sp > 1$, while the functions in $\mathfrak{W}^{s,p}(\Omega)$ remain as singular as typical  {$L^{p}$-functions} in the interior of $\Omega$.  

\begin{thm}[General trace theorem]\label{coro:bdddomain-intro}

Let $1\leq p < \infty$, $s\in (0, 1]$ and $ps > 1$. Assume that $\Omega$ is a bounded Lipschitz domain  in $\mathbb{R}^{d}(d\geq 2)$. Then the linear operator 
\[
  {T}u = u|_{\partial \Omega}, \quad u\in C^{0,1}(\overline{\Omega})
\]
has a unique extension to a bounded linear operator 
\[
 {T}: \Wf(\Omega) \to W^{s- \frac{1}{p}, p}(\partial \Omega), 
\]
and there exists a constant $C$ depending only on $s, p, d $ and the boundary character of $\Omega$ such that 
\begin{equation} \label{trace-estimate}
\| {T}u\|_{W^{s-\frac{1}{p},p}(\partial\Omega)} \leq C \|u\|_{\Wf (\Omega)}, \quad \forall u\in \Wf (\Omega).
\end{equation}
\end{thm}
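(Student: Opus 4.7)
The plan is to first localize and flatten the boundary of $\Omega$ to reduce the problem to the flat half-space $\R^d_+$, then prove the trace inequality there by a dyadic averaging argument, and finally extend the result from $C^{0,1}(\overline{\Omega})$ to $\Wf(\Omega)$ by density. For the reduction, since $\Omega$ is a bounded Lipschitz domain I would cover $\partial\Omega$ by finitely many charts on which $\partial\Omega$ is the graph of a Lipschitz function, flatten each via a bi-Lipschitz map onto a neighborhood of $\Gamma = \R^{d-1} \times \{0\}$ in $\overline{\R^d_+}$, and use a subordinate partition of unity. The $\Wf$ norm transforms equivalently under such charts because $\delta(\bx) \asymp x_d$ near the flat boundary and the Jacobians are bounded above and below, though care must be taken with the singular weight $\delta(\bx)^{-(d+ps)}$. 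It then suffices to prove the estimate for $u \in C^{0,1}(\overline{\R^d_+})$ supported near $\Gamma$.

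The heart of the argument is a dyadic averaging on the half-space. For $\bx' \in \R^{d-1}$ and $r > 0$ define
\[
U_r(\bx') := \fint_{Q_r(\bx')} u(\by)\, d\by,
\]
where $Q_r(\bx')$ is a cube of side length $\asymp r$ sitting at height $\asymp r$ above $\bx'$. For Lipschitz $u$ one has $U_r(\bx') \to u(\bx', 0)$ as $r \to 0^+$, and the telescoping identity
\[
u(\bx',0) = U_1(\bx') + \sum_{k=0}^\infty \bigl[U_{2^{-k-1}}(\bx') - U_{2^{-k}}(\bx')\bigr]
\]
holds pointwise. Two fluctuation bounds drive the rest: a \emph{vertical} bound $|U_r(\bx') - U_{r/2}(\bx')|^p \leq C r^{-d} \iint_{Q_r \times Q_r} |u(\bx) - u(\by)|^p\, d\bx\, d\by$, and a \emph{lateral} analogue for $|U_r(\bx') - U_r(\by')|^p$ when $|\bx'-\by'| \asymp r$. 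Both come from Jensen's inequality and the key geometric fact that, within a cube of diameter $\asymp r$ sitting at height $\asymp r$, every pair $(\bx, \by)$ satisfies $|\bx - \by| \leq \delta(\bx)$ so the pair is admissible in the kernel of the $\Wf$ seminorm; absorbing $r^{d+ps} \asymp \delta(\bx)^{d+ps}$ converts each right-hand side to the genuine integrand
\[
\iint_{Q_r \times (Q_r \cap B_{\delta(\bx)}(\bx))} \frac{|u(\bx)-u(\by)|^p}{\delta(\bx)^{d+ps}}\, d\by\, d\bx.
\]

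With these bounds available, decompose the boundary Gagliardo seminorm dyadically in $|\bx'-\by'|$ and, for each pair at scale $2^{-k}$, chain through $u(\bx',0) \to U_{2^{-k}}(\bx') \to U_{2^{-k}}(\by') \to u(\by',0)$, using the vertical telescoping on the two ends and the lateral estimate in the middle. Bounded overlap of the cubes $Q_r(\bx')$ as $\bx'$ varies allows the resulting integral over $\Gamma$ to be reassembled as an integral over the dyadic layer $\{\delta \asymp 2^{-k}\}$ in $\R^d_+$; summing over $k$ then recovers the full $\Wf$ seminorm, and the hypothesis $sp > 1$ is precisely what forces the geometric series arising from the scaling factors in the fluctuation bounds to converge. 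The $L^p$ bound of the trace is handled by an easier variant of the same telescoping. Since $C^{0,1}(\overline{\Omega})$ is dense in $\Wf(\Omega)$ by definition, the bounded linear map extends uniquely to all of $\Wf(\Omega)$, yielding \eqref{trace-estimate}.

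The main obstacle, compared with the classical $W^{s,p}$ trace theorem, is that the $\Wf$ seminorm only captures short-range interactions inside $B_{\delta(\bx)}(\bx)$, so long-range chaining is unavailable and every comparison must be built out of same-scale pairs together with dyadic vertical telescoping. The entire scheme rests on the delicate geometric compatibility between the horizon $\delta(\bx)$ and the dyadic scale at the corresponding height: at height $\asymp r$ the horizon equals $r$, just wide enough to allow both lateral comparison within a dyadic layer and vertical comparison between adjacent layers. Verifying this compatibility carefully, and tracking the scaling of each estimate so that $sp > 1$ emerges as the precise threshold for convergence of the dyadic sums, is the technical heart of the proof.
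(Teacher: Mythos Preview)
Your proposal is a valid strategy but takes a genuinely different route from the paper on the half-space step. The paper does \emph{not} use dyadic averages and a telescoping Littlewood--Paley style argument. Instead, after the same localize-and-flatten reduction (Step~1 and Step~2 of the paper's proof of Theorem~\ref{coro:bdddomain-intro}), the half-space estimate (Theorem~\ref{Trace-for-half-space}) is obtained by (i) a one-dimensional Hardy-type inequality (Proposition~\ref{1d-trace}) applied slice-by-slice in the normal direction, together with (ii) a pair of auxiliary ``normal'' and ``tangential'' directional seminorms $[u]_n$, $[u]_t$ (Definition~\ref{defn:directionalnorms}) and a coupled system of inequalities $[u]_n^p \le \tau_1[u]_t^p + C|u|_{\Wf}^p$, $[u]_t^p \le \tau_2[u]_n^p + C|u|_{\Wf}^p$ with $\tau_1\tau_2<1$ (Lemma~\ref{normal-tangent}). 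For the Gagliardo seminorm on $\Gamma$, the paper splits $u(\bx',0)-u(\by',0)$ via two intermediate points $(\bx',x_d)$, $(\by',y_d)$ chosen at heights in a fixed window $(\alpha|\bx'-\by'|,\beta|\bx'-\by'|)$ and averages over that window; the condition $sp>1$ enters through the Hardy inequality rather than through convergence of a dyadic series.

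Your dyadic-averaging scheme is more in the spirit of Whitney-type trace proofs for Besov spaces and is arguably more modular; it avoids the Hardy inequality entirely and makes the role of $sp>1$ transparent as a summability threshold. The paper's approach, on the other hand, yields the Hardy-type inequality (Theorem~\ref{hardy-for-smooth-special}) as a byproduct of independent interest, and the comparison lemma between different horizon parameters $\vartheta$ (Lemma~\ref{lemma-half-to-vartheta}) is reused later for the Hardy and Poincar\'e inequalities on Lipschitz domains. Both routes rely on the same geometric compatibility you identify---that at height $\asymp r$ the horizon is exactly $r$---but exploit it differently.
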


The proof of Theorem \ref{coro:bdddomain-intro} follows the same general procedures adopted in \cite{TiDu2017}, which are standard for the proof of trace theorems for Sobolev spaces, \cite{Di96}. We first establish the validity of the statement over the half-space $\R^{d}_+=\R^{d-1}\times (0, \infty)$, which is the main step that involves significantly different analysis from the traditional case. We then use the partition of unity to extend the result for bounded domains with Lipschitz boundary. 
The proof of the trace theorem for the half space, similar to the case of \cite{TiDu2017}, 
relies on a  {one-dimensional nonlocal fractional Hardy-type inequality, which is a special case of the general Hardy-type inequality stated below.  }  
 
 {
\begin{thm}[Hardy-type inequality]\label{NLHardy-Lips}
 Let $1< p < \infty$, $s\in (0, 1]$ and $ps > 1$. Assume that $\Omega$ is a bounded Lipschitz domain in $\mathbb{R}^{d} \,(d\geq 2)$.
Then there exists a constant $C>0$ such that for any $u\in \mathfrak{\mathring{W}}^{p,s}(\Omega)$
\beq \label{prop:ghardy-semi}
 \int_\Om \dfrac{|u(\bm x)|^p}{( {\dist}(\bm x, \partial \Om))^{ps}} d\bm x \leq C  |u|^p_{\Wf (\Omega)}\,.
 \eeq
 \end{thm}
 }

Theorem \ref{NLHardy-Lips} quantifies the rate of decay to zero for functions with vanishing trace which is more heavily determined by their smoothness near $\partial \Omega$ as they can be rough away from the boundary.  {  Inequalities similar to \eqref{prop:ghardy-semi} have been shown in  \cite[Theorem 4]{Dyda-Vah-Hardy} where a general framework of  fractional Hardy-type inequalities is provided. 
Our proof of the inequality \eqref{prop:ghardy-semi} involves first establishing it for the half-space $\R^d_+$, which is discussed in details in Section \ref{N-Hardy}, and then extending it to general bounded Lipschitz domains by the partition of unity technique.} 


To demonstrate  {potential applications} of the function space in allowing  singularity as part of the solution, we study the problem of minimizing the energy 
\begin{equation}\label{Energy}
E(u) =\int_{\Omega}\int_{ { B_{\delta(\bx)}(\bx)}} \frac{A(\bx, \by)}{\delta(\bx)^{\mu}} {F(u(\by)-u(\bx))}d\bx d\by
\end{equation}
over an appropriate subsets of $\mathfrak{W}^{s, p}(\Omega)$ where $A(\bx, \by)$ serves as a coefficient and is symmetric, and elliptic in the sense that $0<\alpha_1 \leq A(\bx, \by) \leq \alpha_2 < \infty$ for all  {$\bx,\by \in \Omega$. The} function $F:\mathbb{R} \to \mathbb{R}$ is a convex function such that for some positive constants $c_1, c_2$ and $c_3$ 
\begin{equation}\label{Convexity-of-F}
c_1|t|^p \leq F(t) \leq c_{2} |t|^p\text{ and $|F'(t)| \leq c_3|t|^{p-1} $ for almost every $t\in \mathbb{R}$.}
\end{equation}
 {Notice that convex functions are differentiable almost everywhere (see e.g. \cite[Theorem 25.5]{rockafellar1970}).}
To define the subset over which we minimize the energy, for $1<p<\infty $ and $s\in (0, 1)$ such that $ps> 1$,  {We consider some prescribed data given by} $\phi\in W^{s, p}(\Omega)$  {and} $0\leq h\in L^{p}(\Omega)\setminus {W}^{s, p}(\Omega)$. For a fixed open subset $\Omega_0\Subset \Omega$ (compactly contained), consider the set of functions 
\[
K_{\phi}(p, h)=\{u\in \mathfrak{W}^{s, p}(\Omega): u\geq h,\,\text{in $\Omega_0$  }\text{and  } u-\phi\in \mathfrak{\mathring{W}}^{s, p}(\Omega) \}. 
\]
The set $K_{\phi}(p, h)$ collects all functions in  $\mathfrak{W}^{s, p}(\Omega)$ that are above the nonnegative function $h(\bx)$ for $\bx\in\Omega_0$ 
and have the same trace on the boundary  {$\partial\Om$} as $\phi$.
Notice that the function $h=h(\bx)$ is assumed to be a generic  {$L^p$-function}, thus allowing representations of more rough obstacles. 
While behaving like  $W^{s,p}(\Omega)$-functions at the boundary, functions in $K_\phi(p, h)$   {are allowed to} essentially retain the  {potential} singularity of an 
 {$L^p$-function} inside $\Omega_0.$   
 {The set $K_{\phi}(p,h)$ is nonempty as the function $\bx\mapsto \phi(\bx)\chi_{\Omega\setminus \Omega_0}(\bx)+ h(\bx)\chi_{\Omega_0}(\bx)\in K_{\phi}(p,h).$ Indeed, since $\phi\in W^{s,p}(\Omega)$, it suffices to show that $u-\phi= (h-\phi) \chi_{\Omega_0}\in \mathfrak{\mathring{W}}^{s, p}(\Omega)$. To that end, let $\varphi_{n} \in C_c^{0, 1}(\Omega_0)$ such that $\|h-\phi-\varphi_{n}\|_{L^{p}(\Omega_0)}\to0$ as $n\to\infty$. Now there exists $\delta_0 >0$ such that $\text{dist}(\bx, \partial \Omega) \geq \delta_0$ for all $\bx\in \Omega_0$ and so for some positive constant $C$,
\[
\|h-\phi-\varphi_{n}\|_{\mathfrak{W}^{s, p}(\Omega)} \leq C  \delta_0^{-\frac{d+ps}{p}} \|h-\phi-\varphi_{n}\|_{L^{p}(\Omega_0)} \to 0, \quad \text{as $n\to \infty$,}
\] where extending by 0 outside $\Omega_0$, we assume that $\varphi_{n} \in C_c^{0, 1}(\Omega)$. 
It is also clear that the set  $K_{\phi}(p,h)$ is a convex and closed subset of $\mathfrak{W}^{s, p}(\Omega)$. Notice that by the trace theorem, since $ps> 1$, any function in $K_{\phi}(p,h)$ has a well defined trace 
that 
agrees with that of the trace of $\phi$.} 
 {Another ingredient we need for the coercicity of the energy $E$ is the nonlocal Poincar\'e-type inequality that will be established in Proposition \ref{Poincare}. The Poincar\'e-type inequality  extends a result stated in \cite{TTD19} and also fills in a gap in the proof presented there. In particular, special treatment near the boundary of $\Omega$ is needed for the type of nonlocal kernel with heterogeneous localization on the boundary.}
As an application of the direct method of calculus of variations,  {using the Poincar\'e-type inequality,} we have the following existence result.
\begin{thm}\label{obstacle}
 {Let $\Omega\subset\mathbb{R}^d$ be a bounded Lipschitz domain.}
For $1<p<\infty $ and $s\in (0, 1)$ such that $ps> 1$, fix $\phi\in W^{s, p}(\Omega)$ and $0\leq h\in L^{p}(\Omega)\setminus {W}^{s, p}(\Omega)$
and $f\in 
[\mathfrak{W}^{s, p}(\Omega)]^\ast$, the dual space of $\mathfrak{W}^{s, p}(\Omega)$.
Then, there is a $u\in K_{\phi}(p, h)$
such that 
\[
E(u) - \langle f, u\rangle = \inf_{v\in K_{\phi}(p, h)}\left(E(v) - \langle f,  {v}\rangle\right).
\]
where $E$ is the energy functional defined in \eqref{Energy} and $\langle f, u\rangle$ is the action of $f$ on $u$. 
\end{thm}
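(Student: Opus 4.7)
The plan is to apply the direct method of the calculus of variations to $J(v):=E(v)-\langle f,v\rangle$ on the set $K_{\phi}(p,h)$. I would pick an infimizing sequence $\{u_n\}\subset K_{\phi}(p,h)$, establish its boundedness in $\mathfrak{W}^{s,p}(\Omega)$, extract a weakly convergent subsequence, show the limit still lies in $K_\phi(p,h)$, and invoke weak lower semicontinuity of $J$ to conclude that it is a minimizer.

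\emph{Boundedness via coercivity.} The ellipticity $A\geq \alpha_1$ and the lower bound $F(t)\geq c_1|t|^p$ give $E(v)\geq c_1\alpha_1\,|v|_{\mathfrak{W}^{s,p}(\Omega)}^p$. Because $v-\phi\in\mathfrak{\mathring{W}}^{s,p}(\Omega)$, I would apply the Poincaré-type inequality for $\mathfrak{\mathring{W}}^{s,p}(\Omega)$ derived from the Hardy-type inequality (cf.\ Theorem~\ref{hardy-for-smooth-special} and its bounded-domain counterpart) to control $\|v-\phi\|_{L^p(\Omega)}$ by $|v-\phi|_{\mathfrak{W}^{s,p}(\Omega)}$. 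Since $\delta(\bdx)^{-(d+ps)}\leq |\bdy-\bdx|^{-(d+ps)}$ whenever $|\bdy-\bdx|<\delta(\bdx)$, one has the embedding $W^{s,p}(\Omega)\hookrightarrow\mathfrak{W}^{s,p}(\Omega)$, so $|\phi|_{\mathfrak{W}^{s,p}(\Omega)}<\infty$. Combining with $|\langle f,v\rangle|\leq \|f\|_{\ast}\|v\|_{\mathfrak{W}^{s,p}(\Omega)}$ and Young's inequality (using $p>1$) yields $J(v)\geq c\|v\|_{\mathfrak{W}^{s,p}(\Omega)}^p-C$, whence $\sup_n\|u_n\|_{\mathfrak{W}^{s,p}(\Omega)}<\infty$.

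\emph{Weak compactness and weak closedness of the constraint.} The map $u\mapsto\bigl(u,\;\delta(\bdx)^{-\mu/p}(u(\bdy)-u(\bdx))\chi_{\Omega\cap B_{\delta(\bdx)}(\bdx)}(\bdy)\bigr)$ is a linear isometry of $\mathfrak{W}^{s,p}(\Omega)$ into $L^p(\Omega)\times L^p(\Omega\times\Omega)$ with closed range (by completeness), so $\mathfrak{W}^{s,p}(\Omega)$ is reflexive for $1<p<\infty$. Passing to a subsequence, $u_n\rightharpoonup u$ in $\mathfrak{W}^{s,p}(\Omega)$, which in particular gives $u_n\rightharpoonup u$ in $L^p(\Omega)$. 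The affine condition $u-\phi\in\mathfrak{\mathring{W}}^{s,p}(\Omega)$ is preserved under weak limits because $\mathfrak{\mathring{W}}^{s,p}(\Omega)$ is (by definition) a closed subspace; the obstacle condition $\{v:v\geq h\text{ a.e.\ in }\Omega_0\}$ is convex and strongly closed in $L^p(\Omega_0)$, hence weakly closed. Therefore $u\in K_{\phi}(p,h)$.

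\emph{Weak lower semicontinuity and conclusion.} Convexity of $F$ makes the integrand of $E$ pointwise convex in $v$, so $E$ is convex on $\mathfrak{W}^{s,p}(\Omega)$. Using $|F'(t)|\leq c_3|t|^{p-1}$, the mean value theorem, and Hölder's inequality, one obtains $|E(u)-E(v)|\leq C\bigl(|u|_{\mathfrak{W}^{s,p}(\Omega)}+|v|_{\mathfrak{W}^{s,p}(\Omega)}\bigr)^{p-1}|u-v|_{\mathfrak{W}^{s,p}(\Omega)}$, so $E$ is strongly continuous and thus weakly lower semicontinuous by convexity; meanwhile $v\mapsto\langle f,v\rangle$ is weakly continuous. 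Hence $J(u)\leq \liminf_n J(u_n)=\inf_{K_\phi(p,h)}J$, so $u$ is a minimizer. The only step that really depends on the specific structure of $\mathfrak{W}^{s,p}(\Omega)$ is the coercivity, which hinges on the nonlocal Poincaré/Hardy inequality for $\mathfrak{\mathring{W}}^{s,p}(\Omega)$; the remaining work is standard direct-method bookkeeping, modulo the isometric embedding used to justify reflexivity.
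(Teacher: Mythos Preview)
Your proposal is correct and follows essentially the same direct-method approach as the paper: coercivity via the nonlocal Poincar\'e inequality (Proposition~\ref{Poincare}), reflexivity via the isometric embedding into $L^p(\Omega)\times L^p(\Omega\times\Omega)$, weak closedness of $K_\phi(p,h)$ from convexity and strong closedness, and weak lower semicontinuity from convexity of $F$. The only cosmetic difference is that the paper first translates to $K_0(p,h)$ and establishes weak lower semicontinuity via the subgradient inequality $F(\tau)-F(t)\geq F'(t)(\tau-t)$ applied directly, whereas you deduce it from convexity plus strong continuity; both routes are standard and equivalent here.
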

{In the special case of $p=2$ and $F(t)=t^2$, the minimizer solves a linear variational inequality.}
\begin{coro}\label{obstaclep=2}
 {Let $\Omega\subset\mathbb{R}^d$ be a bounded Lipschitz domain.}
For $p=2$, let $s\in (0, 1)$ such that $2s> 1$, fix $\phi\in W^{s, 2}(\Omega)$, $0\leq h\in L^{2}(\Omega)\setminus {W}^{s, 2}(\Omega)$ and
 $f\in 
 [\mathfrak{W}^{s, 2}(\Omega)]^\ast 
 $. Then, 
 there is a unique minimizer 
 $u\in K_{\phi}(2, h)$ defined by
 \[
E(u) - \langle f, u\rangle = \inf_{v\in K_{\phi}(2, h)}\left(E(v) - \langle f, u\rangle\right)
\]
where $E$ is the energy functional defined in \eqref{Energy} with $p=2$.
Moreover, $u$ satisfies the variational inequality 
\begin{equation}\label{obstacle-inequality}
B(u, v-u)\geq  \langle f, v-u\rangle,\quad \forall v\in K_{\phi}(2, h),
\end{equation}
{for the bilinear form $B(u, v)$ defined by 
\begin{equation}\label{bilinearform}
B(u, v) = \frac{1}{2}\int_{\Omega}\int_{\Omega} G(\bx, \by)(u(\bx)-u(\by))(v(\bx)-v(\by))d\by d\bx,\quad \text{for all $u,v\in \mathfrak{W}^{s, 2}(\Omega).$ } 
\end{equation}
and
\[
G(\bx, \by) =A(\bx, \by)\left(  \frac{\chi_{B_{\delta(\bx)} {(\bm 0)}} (\bx-\by)}{\delta(\bx)^{d+2s}}+   \frac{\chi_{B_{\delta(\by)}  {(\bm 0)}}(\bx-\by)}{\delta(\by)^{d+2s}}\right).
\]
}
\end{coro}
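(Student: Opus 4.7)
The existence of a minimizer $u \in K_\phi(2, h)$ follows immediately from Theorem~\ref{obstacle} specialized to $p=2$ and $F(t)=t^2$, so the new content here is uniqueness and the variational characterization. For uniqueness, I would invoke strict convexity: the map $u \mapsto E(u)$ is a quadratic form, and on differences $u_1 - u_2$ of elements of $K_\phi(2,h)$ (which lie in $\mathfrak{\mathring{W}}^{s,2}(\Omega)$ by the definition of the admissible set) the ellipticity $A \geq \alpha_1 > 0$ combined with a Poincar\'e-type inequality on $\mathfrak{\mathring{W}}^{s,2}(\Omega)$ (itself a consequence of the Hardy inequality, Theorem~\ref{hardy-for-smooth-special}) guarantees positive definiteness. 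Hence $E - \langle f, \cdot\rangle$ is strictly convex on the convex set $K_\phi(2,h)$, and the minimizer is unique.

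For the variational inequality \eqref{obstacle-inequality}, I would run the standard first-variation argument. Fix any $v \in K_\phi(2,h)$; convexity of the admissible set gives $w_t := (1-t)u + tv \in K_\phi(2,h)$ for every $t \in [0,1]$. The scalar function $g(t) := E(w_t) - \langle f, w_t\rangle$ attains its minimum on $[0,1]$ at $t=0$, so $g'(0^+) \geq 0$. Differentiating the quadratic integrand under the integral sign yields
\[
g'(0^+) = 2\int_{\Omega}\int_{\Omega\cap B_{\delta(\bx)}(\bx)} \frac{A(\bx,\by)}{\delta(\bx)^{d+2s}}\,\bigl(u(\by)-u(\bx)\bigr)\bigl((v-u)(\by)-(v-u)(\bx)\bigr)\,d\by\,d\bx - \langle f, v-u\rangle.
\]
The key step is to symmetrize this asymmetric kernel. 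Swapping $\bx \leftrightarrow \by$ in a copy of the double integral and using $A(\bx,\by) = A(\by,\bx)$ together with the invariance of the product of differences converts the kernel $\chi_{\{|\bx-\by|<\delta(\bx)\}}/\delta(\bx)^{d+2s}$ into the symmetric kernel $G(\bx,\by)$ of the statement. The resulting expression is $B(u,v-u)$ (up to the explicit $\tfrac{1}{2}$ normalization in \eqref{bilinearform}, which cancels the factor $2$ arising from $F'(t)=2t$), so the inequality $g'(0^+)\geq 0$ is exactly \eqref{obstacle-inequality}.

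The only delicate point, and the one I expect to be the main obstacle, is justifying the differentiation under the integral. For the quadratic $F(t) = t^2$, however, the difference quotient $\bigl(F(a+tb)-F(a)\bigr)/t = 2ab + tb^2$ is dominated pointwise by $a^2 + (2+|t|)b^2$. Applied with $a = u(\by)-u(\bx)$ and $b = (v-u)(\by)-(v-u)(\bx)$, this majorant is integrable against the kernel $A(\bx,\by)/\delta(\bx)^{d+2s}$ over $\{(\bx,\by): \by \in \Omega \cap B_{\delta(\bx)}(\bx)\}$ uniformly in $t\in[0,1]$, since both $u$ and $v-u$ belong to $\mathfrak{W}^{s,2}(\Omega)$. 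Dominated convergence then validates the computation, and I do not anticipate any further obstruction.
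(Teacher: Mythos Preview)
Your argument is correct and covers the same ground as the paper, but the paper packages the uniqueness and variational-inequality parts into a single citation of Stampacchia's theorem \cite[Theorem~5.6]{brezisbook}: once one observes that $B$ is a bounded, symmetric, coercive bilinear form on the Hilbert space $\mathfrak{W}^{s,2}(\Omega)$ (coercivity on $\mathfrak{\mathring{W}}^{s,2}(\Omega)$ coming from the Poincar\'e inequality, Proposition~\ref{Poincare}), Stampacchia gives at once a unique $u\in K_\phi(2,h)$ satisfying \eqref{obstacle-inequality}, and symmetry of $B$ identifies it with the minimizer of $\tfrac12 B(v,v)-\langle f,v\rangle = E(v)-\langle f,v\rangle$. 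Your direct route---strict convexity for uniqueness and a first-variation computation for \eqref{obstacle-inequality}---is exactly what underlies Stampacchia's theorem, so the two proofs are equivalent in content; yours is more self-contained, the paper's is shorter. One small remark: for strict convexity you do not actually need the full Poincar\'e inequality, only that the seminorm $|\cdot|_{\mathfrak{W}^{s,2}(\Omega)}$ vanishes on $\mathfrak{\mathring{W}}^{s,2}(\Omega)$ only at zero, which was already noted in Section~\ref{Prelim}.
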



This paper is organized as follows. In Section \ref{Prelim}, we present some preliminary results collecting estimates that will be used through out the paper. Some properties of the space $\mathfrak{W}^{s,p}(\Omega)$ will be proved. In Section \ref{N-Hardy}, the nonlocal Hardy-type inequality 
 for a stripe and half space will be proved. This result will be used in Section \ref{trace-strip} to establish the trace theorem for functions defined on stripes and a half space. The main theorem on the trace of functions on a bounded  Lipschitz domain is proved in Section \ref{trace-bounded} {together with a proof of the nonlocal Hardy-type inequality and Poincar\'e-type inequality on such general domains}. The proofs of Theorem \ref{obstacle} and Corollary \ref{obstaclep=2} are presented in Section \ref{appl}. Finally, concluding remarks are given at the end.

\section{Some definitions and preliminary estimates}\label{Prelim}
In this section we present some elementary estimates that will be frequently cited throughout the paper. 
The proof of the trace theorem also requires us to introduce the parametrized space of functions $\mathfrak{W}^{s,p}_{\vartheta}(\Omega)$. Given $s\in (0, 1]$, $1\leq p < \infty$, a number $\vartheta \in (0, 1]$ and for any $u\in L^{p}(\Omega)$, let us introduce the notation $|u|_{\mathfrak{W}^{s,p}_{\vartheta}(\Omega)}$  defined as 
\[
|u|^{p}_{\mathfrak{W}^{s,p}_{\vartheta}(\Omega)} := \int_{\Omega} \int_{ { B_{\vartheta\delta(\bx)}(\bx)}} \frac{|u(\by) - u({\bx })|^{p}}{|\vartheta\delta (\bx)|^{{\mu}}} d\bdy d\bdx
\]
where $\delta(\bdx) =  {\dist}(\bdx, \partial \Om)$. As before,  if $\Omega$ is the special unbounded sets, either the half space $\mathbb{R}^{d}_{+},$ or the stripe  $\R^{d}_{M^{+}}:=\R^{d-1}\times (0, M]$, then the boundary $\partial\Omega$ is  $\mathbb{R}^{d-1}\times\{0\}$ and $\delta(\bdx', x_d)= x_d$. It is clear that $|u|_{\mathfrak{W}^{s,p}_{\vartheta}(\Omega)}$ defines a seminorm. 
We  now define the function space $\mathfrak{W}^{s,p}_{\vartheta}(\Omega)$ to be the  {completion} of $C_c^{0,1}(\overline{\Om})$ with respect to the norm 
\[
\|\cdot\|_{\mathfrak{W}^{s,p}_{\vartheta}(\Omega)}  = \left( \|\cdot\|_{L^{p}}^{p} + |\cdot|_{\mathfrak{W}^{s,p}_{\vartheta}(\Omega)}^{p}\right)^{1/p}. 
\]
We observe that if $|u|^{p}_{\mathfrak{W}^{s,p}_{\vartheta}(\Omega)} =0$, then $u$ is a constant  {on any connected component of $\Omega$}. Indeed, for  any $D$ compactly contained in  {a connected component of} $\Omega,$ for any $\bx\in D,$ $\delta(\bx) \geq  {\dist}(D,\partial \Omega)=\varepsilon$. Then we have 
\[
0=|u|^{p}_{\mathfrak{W}^{s,p}_{\vartheta}(\Omega)} \geq  \int_{D} \int_{D\cap B_{\vartheta\varepsilon}(\bx)}  \frac{|u(\by) - u({\bx })|^{p}}{|\vartheta\delta (\bx)| ^{{\mu} }} d\bdy d\bdx
\]
from which we have $u(\by) = u({\bx })$ for all $\by\in B_{\vartheta\varepsilon}(\bx)$. By covering a chain of intersecting balls, we have that $u$ is constant in $D$ and therefore constant in  {the connected component of $\Omega$ to which $D$ belongs}.  
By introducing the symmetrized kernel 
\begin{equation}\label{symmetrized-kernel}
\gamma^\vartheta_{p}(\bx, \by) =  \frac{\chi_{B_{\vartheta\delta(\bx)} {(\bm 0)}} (\bx-\by)}{|\vartheta\delta(\bx)|^{{\mu} }}+   \frac{\chi_{B_{\vartheta\delta(\by)}  {(\bm 0)}}(\bx-\by)}{|\vartheta\delta(\by)|^{{\mu} }}\end{equation} we can write the seminorm as 
\[
|u|^{p}_{\mathfrak{W}^{s,p}_{\vartheta}(\Omega)} = \frac{1}{2} \int_{\Omega} \int_{\Omega}  \gamma^\vartheta_{p}(\bx, \by) |u(\by) - u({\bx })|^{p}
d\bdy d\bdx. 
\]
For $\vartheta=1,$ we use the notation $\gamma_{p}$ and $\mathfrak{W}^{s,p}(\Omega)$ instead of $\gamma^{1}_{p}$ and  $\mathfrak{W}^{s,p}_{1}(\Omega)$ respectively.  {By definition, the function space $\mathfrak{W}^{s,p}_{\vartheta}(\Omega)$ is a Banach space  with respect to the norm $\|\cdot\|_{\mathfrak{W}^{s,p}_{\vartheta}(\Omega)}$. }
In addition, using the isometric mapping  
\[
\begin{split}
\mathcal{G}: &\mathfrak{W}^{s,p}_{\vartheta}(\Omega) \to L^{p}(\Omega)\times L^{p}(\Omega\times \Omega)\\
&\mathcal{G}u = (u, (u(\by)-u(\bx))\sqrt[p]{\gamma^\vartheta_{p}(\bx, \by)})
\end{split}
\]
it follows that for $1<p<\infty$, $\mathfrak{W}^{s,p}_{\vartheta}(\Omega)$ is a reflexive Banach space
 {\cite[Proposition 3.20]{brezisbook}. See the proof of \cite[Proposition 8.1]{brezisbook} for the argument.}  
In particular for $p=2$, it is a Hilbert space with the inner product 
\[
\langle u, v\rangle = \int_{\Omega} u(\bx)v(\bx)d\bx + \int_{\Omega}\int_{\Omega} \gamma^\vartheta_{2}(\bx-\by) (u(\by) - u({\bx })) (v(\by) - v({\bx })d\by d\bx.
\]
 Notice that since $|\bx-\by|\leq \vartheta \delta(\bx)$ for all $y\in \Omega\cap B_{\vartheta \delta(\bx)} (\bx)$, we have  $|\vartheta \delta(\bx)|^{-(d+ps)} \leq |\bx-\by|^{-(d+ps)}$ and as a consequence, $W^{s,p}(\Omega)\subset \mathfrak{W}^{s,p}_{\vartheta}(\Omega)$ for any $0<s<1$, $p\geq 1$ and $\vartheta\in (0, 1]$. 

We will prove some properties of the space $\mathfrak{W}^{s,p}_{\vartheta}(\Omega)$ that we need. 
\begin{lem}\label{multibysmoothfn}
Let $1\leq p < \infty$ and $s\in (0, 1]$. 
Suppose that  $\Omega$ is  {an open set} with smooth boundary and $\psi\in  {C^{0, 1}_c(\overline{\Omega})}$, $0\leq \psi \leq 1$. Then if $u\in \Wf_{\vartheta}(\Omega)$, then $\psi u\in \Wf_{\vartheta}(\Omega)$ with the estimate
\[
\|u\psi\|_{\Wf_{\vartheta}(\Omega)}\leq C \|u\|_{\Wf_{\vartheta}(\Omega)}
\]
where $C$ depends on $\vartheta, s$, $p$, and $\psi$.  
\end{lem}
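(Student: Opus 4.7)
\smallskip
\noindent\textbf{Plan of proof.} The strategy is the standard product-rule split combined with a direct calculation of a ``Lipschitz defect'' integral that uses only the boundedness of $\Omega$.

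First, I would establish the semi-norm estimate for any measurable $u$ for which $|u|_{\Wf_\vartheta(\Omega)}$ is finite. Writing
\[
u(\by)\psi(\by)-u(\bx)\psi(\bx)=\psi(\by)\bigl(u(\by)-u(\bx)\bigr)+u(\bx)\bigl(\psi(\by)-\psi(\bx)\bigr)
\]
and applying the convexity inequality $|a+b|^p\le 2^{p-1}(|a|^p+|b|^p)$, the seminorm $|u\psi|^p_{\Wf_\vartheta(\Omega)}$ splits into two pieces. The first piece is controlled by $|u|^p_{\Wf_\vartheta(\Omega)}$ directly because $0\le\psi\le 1$.

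The second piece involves
\[
I(\bx):=\int_{\Omega\cap B_{\vartheta\delta(\bx)}(\bx)}\frac{|\psi(\by)-\psi(\bx)|^{p}}{|\vartheta\delta(\bx)|^{\mu}}\,d\by ,
\]
which I bound using the Lipschitz constant $L$ of $\psi$ and $\mu=d+ps$:
\[
I(\bx)\le L^{p}\int_{B_{\vartheta\delta(\bx)}(\bx)}\frac{|\by-\bx|^{p}}{|\vartheta\delta(\bx)|^{d+ps}}\,d\by=\frac{L^{p}|S^{d-1}|}{d+p}\,(\vartheta\delta(\bx))^{p(1-s)}.
\]
Since $s\in(0,1]$ the exponent $p(1-s)$ is nonnegative, and because $\Omega$ is bounded, $\delta(\bx)\le\operatorname{diam}\Omega$, so $I(\bx)\le C$ uniformly in $\bx$, with $C$ depending on $\vartheta,s,p,L$ and $\operatorname{diam}\Omega$. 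Multiplying by $|u(\bx)|^{p}$ and integrating over $\Omega$ yields a $C\|u\|_{L^{p}}^{p}$ bound. Combined with $\|u\psi\|_{L^{p}}\le\|u\|_{L^{p}}$, this gives the desired norm estimate.

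Second, I need to ensure that $u\psi$ lies in $\Wf_\vartheta(\Omega)$, which by definition is the closure of $C_c^{0,1}(\overline{\Omega})$. Pick a sequence $u_{n}\in C_c^{0,1}(\overline{\Omega})$ with $u_{n}\to u$ in $\|\cdot\|_{\Wf_\vartheta(\Omega)}$. Since $\Omega$ is bounded, $\psi$ extends to a Lipschitz function on $\overline{\Omega}$, and the product $\psi u_{n}$ is again Lipschitz with compact support in $\overline{\Omega}$, hence in $C_c^{0,1}(\overline{\Omega})$. Applying the estimate from the first step to $u_{n}-u_{m}$ shows that $\{\psi u_{n}\}$ is Cauchy in $\Wf_\vartheta(\Omega)$, and its limit must agree with $\psi u$ in $L^{p}$ by dominated convergence. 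Passing to the limit in the estimate yields the stated inequality.

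The proof is essentially routine; the only place where a bit of care is needed is the calculation of $I(\bx)$ and observing that $p(1-s)\ge0$ together with boundedness of $\Omega$ makes the factor $(\vartheta\delta(\bx))^{p(1-s)}$ uniformly bounded. This is what allows the Lipschitz defect to be absorbed into the $L^{p}$ term rather than producing an uncontrolled weight near $\partial\Omega$, and is the one place where the hypotheses $s\le 1$ and boundedness of $\Omega$ enter.
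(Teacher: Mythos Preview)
Your proof is correct and follows essentially the same approach as the paper: the same product-rule split, the same convexity inequality, and the same computation showing the Lipschitz defect term contributes a factor $\delta(\bx)^{p(1-s)}$ which is uniformly bounded by $s\le 1$ and boundedness of $\Omega$. You additionally spell out the density/closure argument showing that $\psi u$ actually belongs to $\Wf_\vartheta(\Omega)$, a point the paper leaves implicit; your constant's dependence on $\operatorname{diam}\Omega$ is also present (though unstated) in the paper's argument.
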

\begin{proof}
It suffices to estimate the seminorm $[\psi u]_{\Wf_{\vartheta}(\Omega)}$  {for $u\in C^{0,1}_c(\overline{\Omega})$}.
 {Notice that $\psi$ is bounded and Lipschitz, then we have
\[
\begin{split}
|\psi(\bdy)u(\bdy) - \psi(\bdx)u(\bdx)| &\leq |\psi(\by) (u(\by)-u(\bx))|+ |u(\bx) (\psi(\by) -\psi(\bx))|, \\
&\leq \| \psi \|_{C^{0,1}_c(\overline{\Omega})} \left( |u(\by)-u(\bx)| +  |u(\bx)| (1\wedge |\by-\bx|) \right)
\end{split}
\]
similar to \cite[Lemma 3.64 (ii)]{Gounoue}. 
Using the above estimate, we have 
\[
\begin{split}
&|\psi u|_{\Wf_{\vartheta}(\Omega)}^{p}  = \int_{\Omega}  \int_{B_{\vartheta\delta(\bdx)} (\bdx)}\frac{ |\psi(\bdy)u(\bdy) - \psi(\bdx)u(\bdx)|^{p}}{(\vartheta \delta(\bdx))^{\mu}} d\bdy d\bdx\\
&\leq 2^{p-1}\| \psi \|^p_{C^{0,1}_c(\overline{\Omega})} \left(\int_{\Omega}  \int_{B_{\vartheta\delta(\bdx)} (\bdx)}\frac{ |u(\bdy) - u(\bdx)|^{p}}{(\vartheta\delta(\bdx))^{\mu}} d\bdy d\bdx 
+  \int_{\Omega}  \int_{B_{\vartheta\delta(\bdx)} (\bdx)}\frac{ |u(\bx)|^{p} (1\wedge |\by-\bx|)^p}{(\vartheta\delta(\bdx))^{\mu}} d\bdy d\bdx\right)\\
&\leq 2^{p-1}\| \psi \|^p_{C^{0,1}_c(\overline{\Omega})} \left(|u|^p_{\Wf_{\vartheta}(\Omega)} + \int_{\Omega}  \int_{B_{\vartheta\delta(\bdx)} (\bdx)}\frac{ |u(\bx)|^{p} (1\wedge \vartheta\del(\bx))^p}{(\vartheta\delta(\bdx))^{\mu} } d\bdy d\bdx.\right)\\
&\leq C \left( |u|_{\Wf_{\vartheta}(\Omega)}  + \int_{\Omega}|u(\bdx)|^{p} \left(\delta(\bx)^{-sp}\wedge\delta(\bdx)^{(1-s)p}\right)d\bdx.  \right) \end{split} 
\]
}
for some constant $C$ that depends on $\vartheta, p, s$ and $\psi$.  Notice the last term is bounded by $\|u\|_{L^{p}}$ since $0<s\leq 1$.
This completes the proof.  
\end{proof}

The following technical lemma will be very useful in the  {proofs of the} Hardy-type inequality  {and} the trace theorem for functions defined on stripes. It quantifies the continuous embedding of one space into another and gives a precise comparison estimate between seminorms of the parametrized nonlocal spaces. Moreover, the lemma specifies the constants involved because we will need it later to clearly identify 
 {how the constant $C$ in Theorem \ref{hardy-for-smooth} depends on $\vartheta$}. 

\begin{lem}\label{lemma-half-to-vartheta}
Let $p\geq 1$, $d\geq 1$,  $s\in (0, 1]$ and $ps \geq 1$.   Then for any  any $M\in (0, \infty]$,  for any  $\theta_{0} \in \left(0, 1\right)$ and  any $\vartheta\in \left(0, \frac{\theta_{0}}{1-\theta_{0}}\right]$, 
we have $\mathfrak{W}^{s,p}_{ {\vartheta}}(\mathbb{R}^{d}_{M^+})\subseteq \mathfrak{W}^{s,p}_{\theta_0}(\mathbb{R}^{d}_{M^+})$. Moreover, if 
$u\in 
\mathfrak{W}^{s,p}_{\theta}(\mathbb{R}^{d}_{M^+})$, then 
\[
\begin{split}
{|u|^{p}_{\mathfrak{W}^{s,p}_{\theta_0}(\mathbb{R}^{d}_{M^+})}}
& \leq 2^{p}|B_{1}(\bm 0)|{\theta_{0}^{1-\mu}} [(1- \theta_{0})M]^{-ps} \|u\|^{p}_{L^{p}(  \mathbb{R}^{d}_{M^+} )} \\
 &\quad + 2^{d+p}(1 + \theta_{0})^{\mu}{\theta_{0}^{-\mu}} \left( {\frac{\theta_{0}}{1-\theta_{0}}}\right)^{d-p} \vartheta^{ps-p} {|u|^{p}_{\mathfrak{W}^{s,p}_{\vartheta}(\mathbb{R}^{d}_{M^+})}}. 
\end{split}
\]
\end{lem}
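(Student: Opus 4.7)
The approach is to compare the two seminorms on a pair-by-pair basis, exploiting the key geometric observation that whenever $|\bx-\by| \leq \theta_0 x_d$ one has the comparable-depth estimate $(1-\theta_0)x_d \leq y_d \leq (1+\theta_0)x_d$. The hypothesis $\vartheta \leq \theta_0/(1-\theta_0)$ is exactly the borderline condition under which pairs captured by the $\theta_0$-kernel can, after the insertion of intermediate points, be related to pairs captured by the $\vartheta$-kernel.

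First I would split the integration region defining $|u|^{p}_{\mathfrak{W}^{s,p}_{\theta_0}}(\mathbb{R}^{d}_{M^+})$ into two subregions: (A) those pairs with $x_d \leq (1-\theta_0)M$, where the full ball $B_{\theta_0 x_d}(\bx)$ lies inside $\mathbb{R}^{d}_{M^+}$, and (B) those with $x_d > (1-\theta_0)M$, where the $\theta_0$-ball is truncated by the top of the strip. In region (B) the geometry is distorted and a direct comparison with the $\vartheta$-seminorm breaks down, so I would use the crude triangle inequality $|u(\by)-u(\bx)|^p \leq 2^{p-1}(|u(\bx)|^p + |u(\by)|^p)$ and integrate the kernel $(\theta_0 x_d)^{-\mu}$ directly, using the lower bound $x_d \geq (1-\theta_0)M$ to extract the factor $[(1-\theta_0)M]^{-ps}$. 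A careful accounting of the volume $|B_1(\bm 0)|$, the $\theta_0$-powers from the kernel normalization, and the $x_d$-integration then yields the first term $2^p|B_1(\bm 0)|\theta_0^{1-\mu}[(1-\theta_0)M]^{-ps}\|u\|^p_{L^p}$.

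For region (A), where the full ball is inside $\mathbb{R}^{d}_{M^+}$, I would use an intermediate-point (chain) argument. Given such a pair $(\bx,\by)$, I would introduce $N \approx \lceil \theta_0/(\vartheta(1-\theta_0)) \rceil$ intermediate points $\bz_0 = \bx, \bz_1, \ldots, \bz_N = \by$ along the segment $[\bx,\by]$ so that each consecutive pair $(\bz_{k-1},\bz_k)$ satisfies $|\bz_k-\bz_{k-1}| \leq \vartheta \min(z_{k-1,d},z_{k,d})$ and therefore is captured by the $\vartheta$-seminorm. Jensen's inequality gives $|u(\by)-u(\bx)|^p \leq N^{p-1}\sum_{k=1}^N |u(\bz_k)-u(\bz_{k-1})|^p$, and after averaging each $\bz_k$ over a small tube-shaped neighborhood (so that Fubini reconstructs the $\vartheta$-seminorm integral), the contribution from region (A) is controlled by a constant multiple of $|u|^p_{\mathfrak{W}^{s,p}_\vartheta}$. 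The exponent $\vartheta^{ps-p} = \vartheta^{p(s-1)}$ emerges from Jensen's factor $N^{p-1} \sim (\theta_0/\vartheta)^{p-1}$ combined with the ratio of normalizations $(\theta_0 x_d)^{\mu}/(\vartheta z_d)^{\mu}$ and the averaging Jacobian; the remaining prefactor $2^{d+p}(1+\theta_0)^\mu\theta_0^{-\mu}(\theta_0/(1-\theta_0))^{d-p}$ records volume and depth-comparison factors.

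The main obstacle is the chaining step: arranging the intermediate-point construction and averaging so that the explicit $\vartheta$-dependence comes out sharp, and verifying that the intermediate points stay inside $\mathbb{R}^{d}_{M^+}$ (which is where the cutoff $x_d \leq (1-\theta_0)M$ is essential). A single-midpoint argument is clean whenever $\vartheta \geq \theta_0/(2-\theta_0)$, since then $|\bm-\bx|\leq \vartheta m_d$ with $\bm = (\bx+\by)/2$; however, for smaller $\vartheta$ inside the permitted range, a genuine chain with $O(\theta_0/\vartheta)$ steps is needed, and the delicate bookkeeping of Jacobians and kernel normalizations in this regime is the crux of the argument.
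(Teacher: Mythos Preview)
Your plan is correct and matches the paper's proof essentially step for step: the same splitting at $x_d = (1-\theta_0)M$, the same crude triangle-inequality bound near the top yielding the $L^p$ term, and the same telescoping/chain argument along the segment $[\bx,\by]$ for the main region, with Jensen producing the factor $n^{p-1}$ and the depth comparison $(1-\theta_0)x_d \leq [\bx_i]_d \leq (1+\theta_0)x_d$ handling the kernel normalization. The one difference is that your proposed ``averaging over tubes'' step is unnecessary: the paper simply writes $\bx_i = \bx + \tfrac{i-1}{n}\bs$, changes the outer variable $\bx \mapsto \bx_i$ (a translation for fixed $\bs$, with unit Jacobian, so all $n$ terms collapse to the same integral), and then rescales $\bs \mapsto \bs/n$ in the inner variable. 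This yields the estimate directly for the discrete sequence $\theta_n = \theta_0/(n(1-\theta_0))$; an arbitrary $\vartheta$ is then handled by sandwiching $\theta_{n+1} < \vartheta \leq \theta_n$ and using $(n+1)/n \leq 2$, which is exactly where the prefactor $2^{d+p}$ comes from.
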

\begin{proof} 
 {Given $\theta_{0} \in \left(0, 1\right)$, $p\geq 1$ and $d\geq 1$ and
 $u\in 
\mathfrak{W}^{s,p}_{\theta_0}(\mathbb{R}^{d}_{M^+})$,} We may   {first consider} $u\in C^{0,1}_c(\overline{\mathbb{R}^{d}_{M^+})}$ so that all  {the norms and seminorm in the above inequality} 
are finite.  {Then by the density argument, we can} extend the  {desired result to $\mathfrak{W}^{s,p}_{\theta}(\mathbb{R}^{d}_{M^+})$.}  To that end, we begin by rewriting the integral on the left hand side as 
\small{\begin{equation}\label{sum-integral}
\begin{split}
\theta_0^\mu|u|^{p}_{\mathfrak{W}^{s,p}_{\theta_0}(\mathbb{R}^{d}_{M^+})}
&= \int_{\mathbb{R}^{d}_{(1-\theta_{0})M^+}}\int_{B_{\theta_{0} x_{d}}  (\bdx)\cap  \mathbb{R}^{d}_{M^+}} \frac{|u(\bdy) - u(\bdx)|^{p} } {|x_{d}|^{\mu}}d\bdy d\bdx\\  &+ \int_{\mathbb{R}^{d-1}}\int_{(1-\theta_{0})M}^{M} \int_{B_{\theta_{0} x_{d} } (\bdx)\cap \mathbb{R}^{d}_{M^+}} \frac{|u(\bdy) - u(\bdx)|^{p} } {|x_{d}|^{\mu}}d\bdy d \bdx\\ 
\end{split}
\end{equation}}We will estimate the two terms in the right hand side of \eqref{sum-integral}.  {We will start with the second term.} Using the change of variables $\bdy = \theta_{0}x_{d}\bdz + \bdx$ we may write as 
\[
\begin{split}
&\int_{\mathbb{R}^{d-1}}\int_{(1-\theta_{0})M}^{M} \int_{B_{\theta_{0} x_{d} } (\bdx)} \chi_{\mathbb{R}^{d}_{M^+}}(\bdy)\frac{|u(\bdy) - u(\bdx)|^{p} } {|x_{d}|^{\mu}}d\bdy d \bdx\\
 &= \int_{\mathbb{R}^{d-1}}\int_{(1-\theta_{0})M}^{M} \int_{B_{1 } (\bm 0)} \chi_{\mathbb{R}^{d}_{M^+}}(\theta_{0} x_{d} \bdz + \bdx)\frac{|u(\theta_{0} x_{d} \bdz + \bdx) - u(\bdx)|^{p} } {|x_{d}|^{\mu}}(\theta_{0} x_{d})^{d} d\bdz d \bdx\\
&\leq \theta_{0}^{d} [(1- \theta_{0})M]^{-ps} \int_{B_{1 } (\bm 0)} \int_{\mathbb{R}^{d-1}}\int_{(1-\theta_{0})M}^{M}\chi_{\mathbb{R}^{d}_{M^+}}(\theta_{0} x_{d} \bdz + \bdx) |u(\theta_{0} x_{d} \bdz + \bdx) - u(\bdx)|^{p} d\bdx d \bdz\\
&\leq 2^{p}|B_{1}(\bm0)| [(1- \theta_{0})M]^{-ps}  \|u\|_{L^{p}(  \mathbb{R}^{d}_{M^+} )}^{p}\\
\end{split}
\]
where we have applied Fubini's theorem. 
We now  {estimate} the first term of the right hand side of \eqref{sum-integral}. Let us denote it by $I(\theta_{0})$ and write it in a slightly different way as 
\[
 I(\theta_{0})
 = \int_{\mathbb{R}^{d}_{(1-\theta_{0})M^+}} \int_{ {B_{\theta_{0} x_{d}}(\bm 0)} } \frac{|u(\bdx + {\bds}) - u(\bdx)|^{p} } {|x_{d}|^{\mu}}d\bds d\bdx . 
\]
 {Notice that for any $\bx\in \mathbb{R}^{d}_{(1-\theta_{0})M^+}$ and $\bs \in B_{\theta_{0} x_{d}}(\bm 0)$, we have $\bx+\bs \in\mathbb{R}^{d}_{M^+}$.}
Now for each $\bds \in  {B_{\theta_{0} x_{d}}(\bm 0)} $ and $n\in \mathbb{N}$ we  write the difference as the telescoping sum of differences  {given by}
\[
u(\bdx + \bds) - u(\bdx) = \sum_{i=1}^{n}
 {[} u(\bdx + \frac{i}{n} \bds) - u(\bdx + \frac{i-1}{n}\bds) 
{]= \sum_{i=1}^{n} [u(\bdx_{i+1})-u(\bdx_i)]
}
\]
 {where $\{\bdx_{i} = \bdx + \frac{i-1}{n}\bds\}_{i=1}^{n+1}$.}
Since $\left(\sum_{i=1}^{n} |a_{i}|\right)^{p} \leq n^{p-1}\sum_{i=1}^{n}|a_{i}|^{p}$ holds for all $1 \leq p <\infty $, we obtain that
\[
\begin{split}
I(\theta_{0})&\leq n^{p-1} \sum_{i=1}^{n} \int_{\mathbb{R}^{d}_{(1-\theta_{0})M^+}} \int_{ {B_{\theta_{0} x_{d}}(\bm 0)} } \frac{|u(\bdx + \frac{i}{n}\bds) - u(\bdx + \frac{i-1}{n}\bds)|^{p} } {|x_{d}|^{\mu}}d\bds d\bdx\\
&\leq n^{p-1}  \sum_{i=1}^{n} \int_{\mathbb{R}^{d}_{(1-\theta_{0})M^+}} \int_{ {B_{\theta_{0} x_{d}}(\bm 0)} } \frac{|u(\bdx_{i} + \frac{1}{n}{\bds}) - u(\bdx_{i} )|^{p} } {|x_{d}|^{\mu}}d\bds d\bdx,
\end{split}
\]
Notice that the $d$-component $[\bdx_{i}]_{d}$ satisfies the inequality that $
(1 -\theta_{0})x_{d} \leq [\bdx_{i}]_{d} \leq (1+\theta_{0} )x_{d}$  {for $i=1,...,n$,} therefore we have 
\[
\begin{split}
I(\theta_{0})&\leq n^{p-1}  \sum_{i=1}^{n} \int_{\mathbb{R}^{d}_{(1-\theta_{0})M^+}} \int_{ {B_{\theta_{0} x_{d}}(\bm 0)} } \frac{|u(\bdx_{i} + \frac{1}{n}{\bds}) - u(\bdx_{i} )|^{p} } {|x_{d}|^{\mu}}d\bds d\bdx
\\
&\leq n^{p-1}  (1 + \theta_{0})^{\mu}\sum_{i=1}^{n} \int_{\mathbb{R}^{d}_{(1-\theta_{0})M^+}} \int_{|\bds| \leq \frac{\theta_{0}}{1-\theta_{0}} [\bdx_{i}]_{d}} \frac{|u(\bdx_{i} + \frac{1}{n}{\bds}) - u(\bdx_{i} )|^{p} } {|[\bdx_{i}]_{d}|^{\mu}}d\bds d\bdx {.}
\end{split}
\]
 {Since $\bx\in \mathbb{R}^{d}_{(1-\theta_0)M^+}$ and $|\bs|\leq \theta_0 x_d $, we have
\[
[\bx_i]_d \leq (1-\theta_0)M + \frac{n-1}{n}\theta_0 (1-\theta_0)M =(1-\theta_0)(1+(n-1)\frac{\theta_0}{n})M =:\beta_n M.  
\]
for each $i=1,\cdots, n$. Notice that $\beta_n=(1-\theta_0)(1+(n-1)\theta_0/n)<(1-\theta_0)(1+\theta_0)<1$.
}
Making the change of variable $\bdx_{i} \to \bdx $ in the outer integral we obtain that 
\[
\begin{split}
I(\theta_{0})&\leq n^{p}  (1 + \theta_{0})^{\mu}\int_{ {\mathbb{R}^{d}_{\beta_n M^+}}} \int_{|\bds| \leq  \frac{\theta_{0}}{1-\theta_{0}}x_{d}} \frac{|u(\bdx + \frac{1}{n}{\bds}) - u(\bdx )|^{p} } {|x_{d}|^{\mu}}d\bds d\bdx.
\end{split}
\]
By a change of variables $\frac{\bds}{n} \to \bds$ in the inner integral and letting {$\theta_{n} = \frac{\theta_{0}}{n(1-\theta_{0})}$}, we get that
\[
\begin{split}
I(\theta_{0})&\leq n^{d +p}  (1 + \theta_{0})^{\mu} \int_{ {\mathbb{R}^{d}_{\beta_n M^+}}} \int_{n|\bds| \leq   \frac{\theta_{0}}{1-\theta_{0}}x_{d}} \frac{|u(\bdx + {\bds}) - u(\bdx )|^{p} } {|x_{d}|^{\mu}}d\bds d\bdx,\quad\\
& = n^{p+d}(1 + \theta_{0})^{\mu}  \frac{1}{n^{\mu}}\frac{\theta_{0}^{\mu}}{(1-\theta_{0})^{\mu}} \int_{ {\mathbb{R}^{d}_{\beta_n M^+}}} \int_{|\bds| \leq  \frac{\theta_{0}}{1-\theta_{0}}\frac{x_{d}}{n}} \frac{|u(\bdx + {\bds}) - u(\bdx )|^{p} } {|{\theta_{n}} x_{d}|^{\mu} }d\bds d\bdx\\
& = n^{p-ps}(1 + \theta_{0})^{\mu}  \frac{\theta_{0}^{\mu}}{(1-\theta_{0})^{\mu}}\int_{ {\mathbb{R}^{d}_{\beta_n M^+}}} \int_{|\bds| \leq  \frac{\theta_{0}}{1-\theta_{0}}\frac{x_{d}}{n}} \frac{|u(\bdx + {\bds}) - u(\bdx )|^{p} } {|{\theta_{n}} x_{d}|^{\mu} }d\bds d\bdx\\
& = 
\theta_n^{ps-p} (1 + \theta_{0})^{\mu}  \left( {\frac{\theta_{0}}{1-\theta_{0}}}\right)^{d+p} \int_{ {\mathbb{R}^{d}_{\beta_n M^+}}} \int_{|\bds| \leq  \theta_n x_d} \frac{|u(\bdx + {\bds}) - u(\bdx )|^{p} } {|{\theta_{n}} x_{d}|^{\mu} }d\bds d\bdx\\
\end{split}
\]
 {Notice that for $\bx\in \mathbb{R}^{d}_{\beta_n M^+}$ and $|\bs|\leq \theta_n x_d$, we have
\[
[\bx+\bs]_i \leq \beta_n M + \beta_n \theta_n M = \left( 1- \frac{n^2}{(n-1)^2} \theta_0^2\right) M \leq M. 
\]} 
Therefore, we conclude that for each $n\in \mathbb{N}$,  there exists a constant $C = C(\theta_{0})$ such that
\begin{equation}\label{inter-ineq}
  I(\theta_{0}) \leq
  C(\theta_{0})(\theta_{n})^{ps-p}   \int_{\mathbb{R}^{d}_{M^+}} 
  \int_{B_{\theta_{n} x_{d}} (\bdx) \cap\mathbb{R}^{d}_{M^+} }
\frac{|u(\bdy) - u(\bdx )|^{p} } {|\theta_{n}x_{d}|^{\mu}}d\bdy d\bdx
\end{equation}
where  $ C(\theta_{0}) = (1 + \theta_{0})^{\mu} \left( {\frac{\theta_{0}}{1-\theta_{0}}}\right)^{d+p}$.   

Now given $\vartheta\in (0,\frac{\theta_{0}}{1-\theta_{0}}]$,  we can choose $n\in \mathbb{N}$ such that $  {\theta_{n+1}}  < \vartheta\leq \theta_{n}$.  This is possible since $\theta_{0}\in (0, 1)$. 
Then, apply the inequality \eqref{inter-ineq} first for $n+1$, and then later for $n$, we obtain that 
\[
\begin{split}
I(\theta_{0}) &\leq
  C(\theta_{0})( {\theta_{n+1}})^{ps-p}   \int_{\mathbb{R}^{d}_{M^+}} \int_{B_{ {\theta_{n+1}} x_{d}} (\bdx)\cap \mathbb{R}^{d}_{M^+}}
\frac{|u(\bdy) - u(\bdx )|^{p} } {| {\theta_{n+1}}x_{d}|^{\mu}}d\bdy d\bdx\\
&\leq C(\theta_{0})( {\theta_{n+1}})^{ps-p} \left(\frac{n+1}{n}\right)^{\mu}  \int_{\mathbb{R}^{d}_{M^+}} \int_{B_{\vartheta x_{d}} (\bdx)\cap \mathbb{R}^{d}_{M^+}}
\frac{|u(\bdy) - u(\bdx )|^{p} } {|\theta_{n}x_{d}|^{\mu}}d\bdy d\bdx\\
&\leq C(\theta_{0})(\theta_{n})^{ps-p} \left(\frac{n+1}{n}\right)^{d + p}  \int_{\mathbb{R}^{d}_{M^+}} \int_{B_{\vartheta x_{d}} (\bdx)\cap \mathbb{R}^{d}_{M^+}} \frac{|u(\bdy) - u(\bdx )|^{p} } {|\vartheta x_{d}|^{\mu}}d\bdy d\bdx\\
&\leq C(\theta_{0})2^{d + p}\vartheta^{ps-p}  
{|u|^{p}_{\mathfrak{W}^{s,p}_{\vartheta}(\mathbb{R}^{d}_{M^+})}}
\end{split}
\]
where in the last inequality we used the fact that $n\geq 1$ and $ps - p\leq 0$. That completes the proof of the lemma. 
\end{proof}

\section{Nonlocal Hardy-type inequalities}\label{N-Hardy}
We begin establishing a Hardy-type inequality for functions define on a line segment in one dimension. The result will also be  used later to prove the trace theorem in for  general domains. We start with the following basic estimates that follows from convexity of the map $t\mapsto t^{p}$, $1\leq p<\infty$. 
\begin{lem}
\label{inequalities} 
The following estimates hold. 
\begin{enumerate}
\item 
Suppose that $1\leq p < \infty.$ For any $\epsilon > 0$, there exists $C = C(\epsilon, p) > 1 $ such that for any $\xi\geq 0$, 
$
\xi^{p} - (1 + \epsilon) \leq C |\xi-1|^{p}.
$
\item For any $0\leq a < b \leq 1$, and any $1 <q < \infty$, we have 
$
\frac{b^{q} - a^{q}}{q(b-a)} < 1.
$
\end{enumerate}
\end{lem}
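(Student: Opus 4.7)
The plan is to prove the two inequalities separately; both are elementary and no real obstacle arises.

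For part (1), I would split into two regimes based on whether $\xi$ is close to or far from $1$. Given $\epsilon > 0$, continuity of $t \mapsto (1+t)^p$ at the origin lets me choose $\eta = \eta(\epsilon, p) \in (0, 1]$ with $(1+\eta)^p \leq 1+\epsilon$. In the near regime $|\xi-1| \leq \eta$, monotonicity of $t^p$ on $[0, \infty)$ gives $\xi^p \leq (1+\eta)^p \leq 1+\epsilon$, so the desired inequality holds for free. In the far regime $|\xi-1| > \eta$, I would combine the triangle inequality $\xi \leq 1 + |\xi-1|$ with the standard convexity bound $(a+b)^p \leq 2^{p-1}(a^p + b^p)$ to obtain $\xi^p \leq 2^{p-1} + 2^{p-1}|\xi-1|^p$, and then absorb the constant term using the fact that $1 < \eta^{-p}|\xi-1|^p$ in this regime. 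This yields the explicit constant $C := 2^{p-1}(1 + \eta^{-p})$, which for $p \geq 1$ and $\eta \leq 1$ automatically satisfies $C > 1$.

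For part (2), the most direct route is the mean value theorem applied to $f(t) = t^q$ on $[a, b]$: there exists $\xi \in (a, b)$ with $b^q - a^q = q\xi^{q-1}(b-a)$, hence $\frac{b^q - a^q}{q(b-a)} = \xi^{q-1}$. Since $\xi$ lies in the open interval, $\xi < b \leq 1$ strictly, and $q-1 > 0$ forces $\xi^{q-1} < 1$. Equivalently, one may write $b^q - a^q = \int_a^b qt^{q-1}\,dt$ and integrate the strict pointwise bound $qt^{q-1} < q$ on $(a, b) \subseteq (0, 1)$, which gives the same conclusion.

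Neither step presents a real obstacle. The only care required is bookkeeping: keeping the dependence $C = C(\epsilon, p)$ transparent through the choice of $\eta$ in (1), and, in (2), noting that the strict inequality comes from the MVT landing inside the open interval $(a, b)$, so that $\xi < 1$ even in the boundary case $b = 1$.
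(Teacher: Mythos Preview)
Your proof is correct. Both parts are handled cleanly and the bookkeeping (the strict inequality $C>1$ in (1), the strict inequality in (2) even when $b=1$) is addressed.

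The route you take for (1) differs from the paper's. The paper writes, for $\xi\ge 1$ and any $\alpha\in(0,1)$, the convex combination $\alpha\xi=\alpha(\xi-1)+(1-\alpha)\frac{\alpha}{1-\alpha}$ and applies convexity of $t\mapsto t^{p}$ to get in one stroke
\[
\xi^{p}\le \alpha^{1-p}(\xi-1)^{p}+(1-\alpha)^{1-p},
\]
then chooses $\alpha=1-(1+\epsilon)^{1/(1-p)}$ so that the constant term is exactly $1+\epsilon$. Your near/far split with the threshold $\eta$ and the bound $(a+b)^{p}\le 2^{p-1}(a^{p}+b^{p})$ is a bit longer but equally valid and arguably more transparent; it also makes the case $p=1$ entirely routine, whereas the paper's convexity identity degenerates there and needs a separate word. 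For (2) the paper simply says ``by convexity''; your mean value theorem argument is effectively the same observation and has the advantage of making the strictness at $b=1$ explicit.
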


\begin{proof} Indeed, (2) follows from the intermediate value theorem, since there exists 
$c\in(a, b)$ such that
$$\frac{b^q-a^q}{q(b-a)}=\frac{1}{b-a}\int_a^b t^{q-1}dt =c^{q-1}<1.$$
Meanwhile, for any $\alpha\in (0,1)$,
$\alpha \xi=\alpha (\xi-1) + (1-\alpha)\frac{ \alpha}{1-\alpha}$,
so $\alpha^p \xi^p \leq \alpha (\xi-1)^p + (1-\alpha)^{1-p}\alpha^p$, that is, $ \xi^p \leq \alpha^{1-p} (\xi-1)^p + (1-\alpha)^{1-p}$. For $p=1$, this implies (1). For $p>1$ and any $\epsilon>0$, we can choose $\alpha=1-(1+\epsilon)^{1/(1-p)}$ and get the constant $C$ in (1) accordingly.
\end{proof}

\begin{prop}\label{1d-trace}
Let  $0\leq a<b\leq 1$, $1\leq p<\infty$,  and $s\in (0,1]$ such that $sp > 1$. Then there exists $C(s,p, a, b)>0$ such that for any $M> 0$ and $u\in C^{0,1}([0, M])$ with $u(0) = 0$ we have
\[
\int_{0}^{M} \frac{|u(x)|^{p}}{|x|^{sp}}dx \leq C \int_{0}^{M} \int_{ax}^{b x} \frac{|u(y) - u(x)|^{p}}{| x|^{sp + 1}}dy dx.
\]
\end{prop}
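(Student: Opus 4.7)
The plan is to derive a recursive inequality for $K:=\int_0^M |u(x)|^p/x^{sp}\,dx$ that lets us absorb a fraction of $K$ back into itself, with the fraction strictly less than $1$ thanks to Lemma \ref{inequalities}(2) and the assumption $sp>1$.

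First, I would observe that $K<\infty$ a priori: since $u\in C^{0,1}([0,M])$ with $u(0)=0$, we have $|u(x)|\le L x$ for some $L\ge 0$, hence $|u(x)|^p/x^{sp}\le L^p x^{p(1-s)}$, which is integrable on $[0,M]$ as $s\le 1$.

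Next, I would invoke Lemma \ref{inequalities}(1) in the rescaled form
\[
|u(x)|^p \le (1+\epsilon)\,|u(y)|^p + C(\epsilon,p)\,|u(x)-u(y)|^p,
\]
which follows from the lemma by setting $\xi=|u(x)|/|u(y)|$ (and handling $u(y)=0$ by continuity). Averaging this inequality in $y$ over $[ax,bx]$ yields
\[
|u(x)|^p \le \frac{1+\epsilon}{(b-a)x}\int_{ax}^{bx}|u(y)|^p\,dy \;+\; \frac{C(\epsilon,p)}{(b-a)x}\int_{ax}^{bx}|u(x)-u(y)|^p\,dy.
\]

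Then I would multiply by $x^{-sp}$ and integrate over $x\in(0,M)$. The second piece is exactly (up to the constant $C(\epsilon,p)/(b-a)$) the right-hand side of the claimed inequality. For the first piece, I apply Fubini. For $a>0$, the change-of-variable computation gives
\[
\int_0^M \frac{1}{x^{sp+1}(b-a)}\int_{ax}^{bx}|u(y)|^p\,dy\,dx
\le \frac{b^{sp}-a^{sp}}{sp\,(b-a)}\int_0^M \frac{|u(y)|^p}{y^{sp}}\,dy = \lambda K,
\]
where $\lambda:=\frac{b^{sp}-a^{sp}}{sp(b-a)}$; the case $a=0$ reduces to $\lambda=b^{sp-1}/(sp)$ by a routine limit of the same computation (and remains $<1$ by Lemma \ref{inequalities}(2) applied in the limit, or directly since $sp>1$ and $b\le 1$).

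The key point is that by Lemma \ref{inequalities}(2) with $q=sp>1$, we have $\lambda<1$. I can therefore choose $\epsilon>0$ so small that $(1+\epsilon)\lambda<1$. Rearranging
\[
K \le (1+\epsilon)\lambda\,K + \frac{C(\epsilon,p)}{b-a}\int_0^M\int_{ax}^{bx}\frac{|u(x)-u(y)|^p}{x^{sp+1}}\,dy\,dx
\]
and using the finiteness of $K$ to absorb, I obtain
\[
K \le \frac{C(\epsilon,p)}{(b-a)\bigl(1-(1+\epsilon)\lambda\bigr)}\int_0^M\int_{ax}^{bx}\frac{|u(x)-u(y)|^p}{x^{sp+1}}\,dy\,dx,
\]
which is the stated inequality with $C=C(s,p,a,b)$.

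The main obstacle is arranging the constants so that the absorption works: the naive bound $|u(x)|^p\le 2^{p-1}(|u(y)|^p+|u(x)-u(y)|^p)$ loses a factor $2^{p-1}$ that can destroy the condition $\lambda<1$ when $p>1$. Using Lemma \ref{inequalities}(1) instead gives a coefficient $(1+\epsilon)$ arbitrarily close to $1$ in front of the average of $|u(y)|^p$, which is precisely what is needed to exploit the strict inequality $\lambda<1$ supplied by the hypothesis $sp>1$ and Lemma \ref{inequalities}(2). A secondary bookkeeping issue is the Fubini swap when $a=0$ and, when $a>0$, bounding the $x$-integral on $[y/b,\min(y/a,M)]$ by the clean expression above; both are routine.
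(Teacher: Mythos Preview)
Your proposal is correct and follows essentially the same route as the paper: invoke Lemma~\ref{inequalities}(1) to get the $(1+\epsilon)$--weighted splitting, average in $y$ over $[ax,bx]$, apply Fubini to identify the coefficient $\lambda=\frac{b^{sp}-a^{sp}}{sp(b-a)}$, and then use Lemma~\ref{inequalities}(2) with $q=sp>1$ to absorb. Your additional remarks (the a~priori finiteness of $K$ from the Lipschitz bound, and the explicit treatment of the limiting case $a=0$) are welcome bookkeeping that the paper leaves implicit.
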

\begin{proof} The proof is similar to the $p=2$ case shown in \cite{TiDu2017}.
Given any $x, y\in (0, M)$ and $\epsilon > 0$, by Lemma \ref{inequalities}, there exists a constant $C(\epsilon, p) > 1$ such that 
\[
|u(x)|^{p} \leq C(\epsilon,p) |u(x) - u(y)|^{p}  + (1 + \epsilon)|u(y)|^{p}. 
\]
This is possible by taking $\xi = \frac{|u(x)|}{|u(y)|}$, when $u(y) \neq 0$  in Lemma \ref{inequalities}. 
Clearly the estimate is also true when $u(y) = 0$ since $C > 1$. 
Integrating the above inequality in $y$ in the interval $(ax, bx)$ and then integrating in $x$ from $0$ to $M$,  we get 
\[
\begin{split}
\int_{0}^{M} \frac{|u(x)|^{p}}{|x|^{sp}}dx &\leq \frac{C}{(b-a)} \int_{0}^{M} \int_{ax}^{bx} \frac{|u(x) - u(y)|^{p}}{|x|^{sp + 1}}dydx + \frac{1  +\epsilon} {b-a} \int_{0}^{M} \int_{ax}^{bx}\frac{|u(y)|^{p}}{|x|^{sp + 1}}dydx\\
& = I_{1} + I_{2}.
\end{split}
\]
$I_{1}$ is clearly in the right form, what remains is to estimate $I_{2}$. 
 {Since} $u\in C^{0,1}(\overline{\Om})$ and $u(0)=0$, we can use  Fubini's theorem and change the order of integration to get
\[
\begin{split}
I_{2}&= \frac{1+\ep}{b-a}\int_0^M \int_{ax}^{bx} \frac{| u(y)|^p}{|x|^{ps + 1}} dydx = \frac{1+\ep}{b-a}\int_0^{bM} \int_{y/b}^{{\min}\{y/a,M\}} \frac{| u(y)|^p}{|x|^{ps + 1}} dxdy \,.
\end{split} 
\]
Now, using the fact that $0\leq a<b\leq 1$, we  obtain that 
\[
\begin{split}
I_{2} &\leq \frac{1 + \epsilon}{b-a} \int_{0}^{M} \int_{y/b}^{y/a} \frac{|u(y)|^{p}}{|x|^{sp + 1}}dxdy \leq \frac{(1 + \epsilon)(b^{sp}-a^{sp})}{sp(b-a)} \int_{0}^{M}   \frac{|u(y)|^{p}}{|y|^{sp}}dy. 
\end{split}
\]
Again we use item (2) of Lemma \ref{inequalities} for $q = sp > 1$ to choose $\epsilon$ small that 
\[
 \frac{(1 + \epsilon)(b^{sp}-a^{sp})}{sp(b-a)}  =: c <1. 
\]
 {
Combining the above estimates we have
\[
(1-c) \int_{0}^{M} \frac{|u(x)|^{p}}{|x|^{sp}}dx \leq I_1,
\]
and the proposition is shown.
}

\end{proof}
 
Now that the one dimensional nonlocal Hardy-type inequality   {is} established, it is natural to expect similar result holds true for higher-space dimensions as well. 
In fact, one expects the existence of a constant  $C$ depending only on $s,p $ and $d$ such that 
 for any $M>0$
 and ${\bx} = (\bx', x_{d})$, 
\beq
 \int_{\mathbb{R}^{d}_{M^+} } \frac{|u(\bx)|^p}{|x_{d}|^{ps}} d\bx \leq C \int_{\mathbb{R}^{d}_{M^+} } \int_{\mathbb{R}^{d}_{M^+} \cap B_{x_{d}}(\bx)} \frac{|u(\by) - u({\bx })|^{p}}{|x_{d}| ^{{\mu} }} d\bdy d\bdx
 \,.
  \label{eq:hardyinstripe}
 \eeq
for any  $u\in C^{0,1}(\overline{\mathbb{R}^{d}_{M^+} })\cap \mathfrak{W}^{s,p}(\mathbb{R}^{d}_{M^+})$ and $u(\bx', 0)=0$ for $\bx'\in\R^{d-1}$. This is a special case of Theorem \ref{hardy-for-smooth} with $\vartheta=1$.

Let us begin the proof by making use of the one dimensional Hardy-type inequality 
in Proposition \ref{1d-trace}  {on} the function $u(\bx', \cdot)$ for each $\bx'\in \mathbb{R}^{d-1}$. Indeed, for any $0\leq a< b\leq 1 $ there exists a constant $C$ depending only on $a, b, p, s$ and $d$ such that 
\begin{equation}\begin{aligned}\label{H-I-inequality}
 \int_{\mathbb{R}^{d}_{M^+} } \frac{|u(\bx)|^p}{|x_d|^{ps}} d {\bx'} &= \int_{\R^{d-1}}\int_{0}^{M}  \frac{|u(\bx', x_{d})|^p}{|x_d|^{ps}} dx_d d\bx' \\
 &\leq C \int_{\R^{d-1}}\int_{0}^{M} \int_{a x_d}^{b x_d} \frac{|u(\bx', y_{d})-u(\bx', x_{d})|^p}{|x_d|^{1 + ps}} dy_ddx_d d\bx'\,,
\end{aligned}
\end{equation}
where we used the fact that, for each $\bx'\in \mathbb{R}^{d-1}$, the function $u(\bx',\cdot) \in C^{1}[0, M]$, $u(\bx',0) = 0$, and 
$\bdx = (\bx', x_{d}) \in \mathbb{R}^{d}_{M^{+}}$.
To complete the proof of \eqref{eq:hardyinstripe} we only need to appropriately estimate the integral on the right hand side of the above inequality.
To that end, we follow \cite{TiDu2017} and observe that the integral involves weighted variations of the function $u$ in the $d${th} variable.  We first present a definition on the nonlocal analog of norms of nonlocal directional derivatives, which is an extension to similar concepts first introduced in \cite{TiDu2017} for $p=2$.

\begin{defn}
\label{defn:directionalnorms}
 Suppose $0\leq a<b\leq 1$ and  $\kappa, \vartheta \in (0, 1]$ are given.  On the horizontal stripe  $  \mathbb{R}^{d}_{M^+}$, 
 we define in the following two directional nonlocal   seminorms $[\cdot]_n$ and $[\cdot]_t$,
 standing for normal and tangential directions respectively with reference to the boundary segment $\Gamma=\R^{d-1}\times \{0\}$,
 \begin{align}
 \label{def:dr1} [u]_{n}^p&=\int_{\mathbb{R}^{d}_{M^+} }\fint_{a x_d}^{ b x_d} \frac{|u(\bx',y_d)-u(\bx',x_d)|^p}{|\vartheta x_d|^{ps}} dy_ddx_d d\bx' \\
  \label{def:dr2} [u]_{t}^p&= \int_{\mathbb{R}^{d}_{M^+} } \fint_{B_{\kappa x_d}(\bx')}\frac{|u(\by', x_{d})-u(\bx', x_{d})|^p}{ |\vartheta x_d|^{ps}}  d\by'  dx_d d\bx'
 \end{align}
  {where $\fint$ denotes the average integral.}
 \end{defn}
 In the definition, the two seminorms depend on $a,b, \kappa$ and $\vartheta$. To eliminate the proliferation of messy notation, we suppress the dependence on the these constants. We now have the control on these seminorms.
 
 \begin{lem}\label{normal-tangent}
Let $\vartheta, s \in (0, 1]$,  $p\geq 1$, such that $sp > 1$. 
Then there exist constants $C$,  $a, b, $ and $\kappa$ with the property that $0\leq a<b\leq 1$, $0 < \kappa<1$ and $(a-1)^{2} + \kappa^{2} \leq \vartheta^{2}$ such that 
\begin{equation}\label{estimate-for-normal-tangential}
 [u]_{n} \leq C |u|_{\mathfrak{W}^{s,p}_{\vartheta}(\mathbb{R}^{d}_{M^+})} {,  \quad \text{ and }}
 \quad [u]_{t} \leq C |u|_{\mathfrak{W}^{s,p}_{\vartheta}(\mathbb{R}^{d}_{M^+})}
\end{equation}
for any $u\in \mathfrak{W}^{s,p}_{\theta}(\mathbb{R}^{d}_{M^+})$.
The constants depend on $\vartheta, p, s, $ and $d$. 
\end{lem}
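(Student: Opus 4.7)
The plan is to bound each of $[u]_n$ and $[u]_t$ by $|u|_{\mathfrak{W}^{s,p}_{\vartheta}(\mathbb{R}^d_{M^+})}$ via an intermediate-point averaging argument. In the defining integrals the two varying points sit on a positive-codimension slice through each other (purely normal in $[u]_n$, purely tangential in $[u]_t$), so they are not directly comparable with the integrand of the full seminorm, which lives on a full $d$-dimensional ball. The key idea is to insert an auxiliary point $\bz$ via the convexity inequality $|u(\bx) - u(\by)|^p \leq 2^{p-1}\bigl(|u(\bx) - u(\bz)|^p + |u(\bz) - u(\by)|^p\bigr)$ and to average $\bz$ over a $d$-dimensional region chosen so that $\bz$ simultaneously lies in $B_{\vartheta \delta(\bx)}(\bx)$ and in $B_{\vartheta \delta(\by)}(\by)$; each of the two resulting halves then matches the integrand of the full seminorm after a change of variables.

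Concretely, I will take $a = 1 - \vartheta/4$, $b = 1$, $\kappa = \vartheta/4$, which satisfies $(a-1)^2 + \kappa^2 = \vartheta^2/8 \leq \vartheta^2$, and use intermediate-averaging regions with both tangential radius and normal length of order $\vartheta x_d/4$. The geometric content is that two orthogonal displacements bounded by $\vartheta x_d/4$ combine to a displacement of length at most $\vartheta x_d/(2\sqrt 2) < \vartheta x_d$, so that $\bz$ lies safely inside every horizon ball. For $[u]_n^p$, I take $\bz = (\bx'+\bdw,\,y_d+s)$ with $(\bdw,s)$ averaged over $B_{\kappa x_d}^{(d-1)}(0)\times [0,\kappa x_d]$, giving $[u]_n^p \leq 2^{p-1}(P_1+P_2)$. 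The term $P_2$, anchored at $\bx$, is handled by changing variables $(\bdw,s)\mapsto \bz$, bounding the residual $y_d$-integration by $(b-a)x_d$, and extending the inner integration to $B_{\vartheta x_d}(\bx)$, which gives the full-seminorm kernel $(\vartheta x_d)^{-(d+ps)}$. The term $P_1$, anchored at $\bp=(\bx',y_d)$ with $\delta(\bp) = y_d \geq a x_d$, is handled by reparametrising as $(\bp,\bz,x_d)$ and performing the innermost integration
\[
\int_{\delta(\bp)}^{\delta(\bp)/a} x_d^{-(d+1+ps)}\,dx_d \,\leq\, \frac{1}{(d+ps)\,\delta(\bp)^{d+ps}};
\]
the membership $\bz\in B_{\vartheta\delta(\bp)}(\bp)$ then permits extending the $\bz$-integration to this ball and restoring the full-seminorm kernel. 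The argument for $[u]_t^p$ is entirely symmetric, with $\bz = (\by' + \bdw,\,t)$, $|\bdw|\leq \kappa x_d$, $t \in [(1-\kappa)x_d, x_d]$; the term anchored at $(\by',x_d)$ becomes independent of $\bx'$, so one integrates $\bx'$ out first before changing variables.

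The main obstacle is purely geometric: one must simultaneously control the two horizon-ball constraints $|\bz-\bx|\leq\vartheta\delta(\bx)$ and $|\bz-\by|\leq\vartheta\delta(\by)$, and in the normal case $\delta(\bx)$ and $\delta(\by)$ differ, so the worst-case inequality $y_d\geq a x_d$ forces exactly the hypothesis $(a-1)^2+\kappa^2\leq\vartheta^2$ together with compatible bounds on the intermediate averaging parameters. Once the geometric set-up is arranged, the rest is Fubini and change-of-variables bookkeeping, together with the observation that the factor $\delta(\bp)^{-(d+ps)}/(d+ps)$ produced by the innermost $x_d$-integration precisely reconciles the exponents in the slice-type seminorms' kernels with those of the full $d$-dimensional seminorm. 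Since $C_c^{0,1}(\overline{\mathbb{R}^d_{M^+}})$ is dense in $\mathfrak{W}^{s,p}_\vartheta(\mathbb{R}^d_{M^+})$, it suffices to establish these bounds for smooth compactly supported $u$, where all the integral manipulations are rigorous.
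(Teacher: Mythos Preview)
Your argument is essentially correct and takes a genuinely different route from the paper. The paper does \emph{not} bound $[u]_n$ and $[u]_t$ independently; instead it inserts only a $(d-1)$-dimensional tangential average and uses the refined inequality of Lemma~\ref{inequalities}(1),
\[
|u(\bx',y_d)-u(\bx',x_d)|^p \le (1+\epsilon)|u(\bx',y_d)-u(\by',y_d)|^p + C_\epsilon|u(\by',y_d)-u(\bx',x_d)|^p,
\]
to obtain a \emph{coupled} pair of estimates $[u]_n^p \le \tau_1[u]_t^p + C|u|^p_{\mathfrak{W}^{s,p}_\vartheta}$ and $[u]_t^p \le \tau_2[u]_n^p + C|u|^p_{\mathfrak{W}^{s,p}_\vartheta}$; the hypothesis $sp>1$ enters precisely through Lemma~\ref{inequalities}(2) to force $\tau_1\tau_2 = (1+\epsilon)^2\frac{b^{sp}-a^{sp}}{sp(b-a)}<1$, after which the system is solved. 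Your approach instead averages the intermediate point over a full $d$-dimensional set sitting simultaneously in both horizon balls, so each directional seminorm is controlled directly by $|u|_{\mathfrak{W}^{s,p}_\vartheta}$ with the crude constant $2^{p-1}$ and no reference to the other seminorm. This is more elementary (no $\epsilon$-trick), and in fact your computation never uses $sp>1$: the innermost integral $\int_{\delta(\bp)}^{\delta(\bp)/a} x_d^{-(d+1+ps)}\,dx_d$ is bounded by $(d+ps)^{-1}\delta(\bp)^{-(d+ps)}$ for any $s>0$. The paper's scheme, on the other hand, mirrors the absorption argument already used in Proposition~\ref{1d-trace} and makes the role of $sp>1$ transparent.

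One small gap to repair: with your choices $b=1$ and $s\in[0,\kappa x_d]$, the $d$-th coordinate of the intermediate point can reach $(1+\kappa)x_d$, which exceeds $M$ when $x_d$ is near $M$; then $u(\bz)$ is undefined and $\bz$ cannot be absorbed into the inner integral of $|u|^p_{\mathfrak{W}^{s,p}_\vartheta(\mathbb{R}^d_{M^+})}$, which is restricted to $\mathbb{R}^d_{M^+}\cap B_{\vartheta x_d}(\bx)$. This is fixed at no cost by taking the normal shift downward, $s\in[-\kappa x_d,0]$ (so $z_d\in[(a-\kappa)x_d,x_d]\subset(0,M]$), or equivalently by choosing $b<1$ with $b+\kappa\le 1$; the geometric constraints $(a-1)^2+\kappa^2\le\vartheta^2$ and $\bz\in B_{\vartheta\delta(\bx)}(\bx)\cap B_{\vartheta\delta(\bp)}(\bp)$ are still satisfied with room to spare.
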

\begin{proof}
It suffices to prove the lemma for the class $C^{0,1}_c(\overline{\mathbb{R}^{d}_{M^+}})$, and then take the  {completion} with respect to the $\mathfrak{W}^{s,p}_{\vartheta}(\mathbb{R}^{d}_{M^+})$ norm. 
To prove  the two inequalities in \eqref{estimate-for-normal-tangential} for any $ C^{0,1}_c(\overline{\mathbb{R}^{d}_{M^+}})$, it is sufficient to show the following two inequalities instead :
there exists constants $\tau_1$ and $\tau_{2}$ such that  for any $u$ measurable in $\mathbb{R}^{d}_{M^+} $, 
\begin{align}
\label{inequ1nt} [u]_n^p &\leq \tau_1 [u]_t^p + C\int_{\mathbb{R}^{d}_{M^+} } \int_{\mathbb{R}^{d}_{M^+} \cap B_{\vartheta x_{d}}(\bdx) } \frac{|u(\bdy) - u(\bdx)|^{p}}{|\vartheta x_{d}|^{{\mu} }} d\bdy d\bdx \\
\label{inequ2nt} [u]_t^p&\leq \tau_2  [u]_n^p+ C \int_{\mathbb{R}^{d}_{M^+} }\int_{\mathbb{R}^{d}_{M^+} \cap B_{\vartheta x_{d}} (\bdx)} \frac{|u(\bdy) - u(\bdx)|^{p}}{|\vartheta x_{d}|^{{\mu} }} d\bdy d\bdx
\end{align}
where the product $\tau_1\tau_2< 1$.  {Notice that for $u\in C^{0,1}_c(\overline{\mathbb{R}^{d}_{M^+}})$,
$[u]_{n}^{p}$, $[u]_{t}^{p}$ and  $|u|_{\mathfrak{W}^{s,p}_{\vartheta,M}(\mathbb{R}^{d}_{M^+})}$ are all finite.} 
We now focus on establishing \eqref{inequ1nt} and \eqref{inequ2nt}.  For any $x_{d}, y_{d} \in (0, M)$, and $\bx', \by'\in\mathbb{R}^{d-1}$,  we may write 
\[
u(\bdx', y_{d}) - u(\bdx', x_{d}) = u(\bdx', y_{d}) - u(\bdy', y_{d})   + u(\bdy',y_{d}) -u(\bdx', x_{d}). 
\]
 \begin{figure}[htbp] 
\centering
  \begin{tikzpicture}[scale=0.9]    
     \draw[line width=2pt] (-2.8,0)--(2.8,0);       
      \draw[line width=2pt] (0,0)--(0, 1.05);
            \draw[line width=2pt] (0,2.75)--(0,3.2);     
            
         \draw[line width=2pt,purple,dotted] (0,2.7)--(0,1.1);
          \draw [decorate,thick, opacity=0.6,decoration={brace,amplitude=10pt, mirror},yshift=4pt](0,0.9) -- (0,2.6) ;

          \draw [dotted, line width=2pt](0,3.2) circle(3.2);
      \draw [line width=2pt,red,dashed](-1.6,3.2)--(1.6,3.2);   
             \draw[line width=2pt,blue,dotted] (-1.6, 2.3)--(1.6,2.3);   
    \draw (0, 3.2) circle(3pt) [fill] ;
       \draw (0, 2.3) circle(3pt) [fill] ;
       \draw (1.2, 2.3) circle(3pt) [fill] ; 
              \draw (1.2,3.2) circle(3pt) [fill] ;    
         
        \node at (-0.3,3.6) {$(\bx', x_{d})$};         
       \node at (-0.8,2.6) {$(\bx', y_{d})$};      
        \node at (1.4, 2.6) {$(\by', y_{d})$};
                \node at (1.4,3.6) {$(\by', x_{d})$};
         \node at (-3.2, 3.4) { {$B_{\kappa y_d}\hspace{-2pt}(\bx')\times \{x_{d}\}$}};  
                  \node at (-3.2, 2.4) { {$B_{\kappa y_d}\hspace{-2pt}(\bx')\times \{y_{d}\}$}};  
                 \node at (1.2, 1.4) {$(ax_d,bx_d)$};  

        \node at (4.6,0.1) {$\R^{d-1}\times \{0\}$};
    \end{tikzpicture}
      \caption{Depiction of geometry  used in the proof of Lemma \ref{normal-tangent}.}\label{fig1}
      \end{figure}
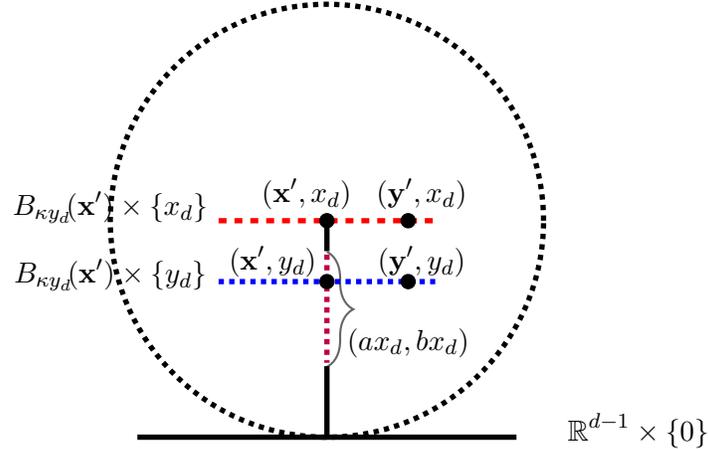

Now given any any $\epsilon > 0$, we may apply Lemma \ref{inequalities} to obtain $C_{\epsilon} > 1$ such that 
\[
|u(\bdx', y_{d}) - u(\bdx', x_{d})|^{p} \leq (1 + \epsilon)|u(\bdx', y_{d}) - u(\bdy', y_{d})|^{p}   + C_{\epsilon} |u(\bdy',y_{d}) -u(\bdx', x_{d})|^{p}.
\]
We fix constants $0 < a <b\leq 1$ and  $\kappa \in (0, 1)$ to be determined shortly. 
Integrating on both sides of the above inequality with respect to $\bdy'$ over the ball $B_{\kappa  y_{d}}(\bdx')$, we obtain that 
\[
\begin{split}
&|u(\bdx', y_{d}) - u(\bdx', x_{d})|^{p} \\ \leq  &(1 + \epsilon)\fint_{B_{ \kappa y_{d}}(\bdx')}|u(\bdx', y_{d}) - u(\bdy', y_{d}) |^{p} d\bdy'  + C_{\epsilon} \fint_{B_{\kappa y_{d}}(\bdx')} |u(\bdy',y_{d}) -u(\bdx', x_{d})|^{p}d\bdy'.
\end{split}
\]
We then integrate in the $y_{d}$ variable over the interval $(ax_{d},bx_{d})$ both sides of the above to obtain  
  \[
\begin{split} \fint_{ a x_{d}}^{bx_{d}} {|u(\bdx', y_{d}) - u(\bdx', x_{d})|^{p}}dy_{d}
&\leq (1 + \epsilon)\fint_{ a x_{d}}^{ bx_{d}}\fint_{B_{ \kappa y_{d}}(\bdx')}{|u(\bdx', y_{d}) - u(\bdy', y_{d}) |^{p}} d\bdy'dy_{d}\\
&+ C_{\epsilon}\fint_{ a x_{d}}^{ bx_{d}}\fint_{B_{ \kappa y_{d}}(\bdx')}|u(\bdy', y_{d}) - u(\bdx', x_{d}) |^{p} d\bdy'dy_{d}\\
\end{split}
\]
Dividing both sides of the inequality by $|\vartheta x_{d}|^{ps}$, and integrating over $\mathbb{R}^{d}_{M^+} $, we obtain that 
\[
\int_{\R^{d-1}} \int_{0}^{M}  \fint_{ a x_{d}}^{bx_{d}} \frac{|u(\bdx', y_{d}) - u(\bdx', x_{d})|^{p}}{|\vartheta x_{d}|^{ps}}dy_{d} dx_{d} d\bdx' = J_{1} + J_{2}
\]
where 
\[
J_{1} = (1 + \epsilon)\int_{\R^{d-1}} \int_{0}^{M}  \fint_{ a x_{d}}^{ bx_{d}}\fint_{B_{ \kappa y_{d}}(\bdx')}\frac{|u(\bdx', y_{d}) - u(\bdy', y_{d}) |^{p}}{|\vartheta x_{d}| ^{ps}} d\bdy'dy_{d}d x_{d} d\bdx'
\]
and 
\[
J_{2} = C_{\epsilon} \int_{\R^{d-1}} \int_{0}^{M}  \fint_{ a x_{d}}^{ bx_{d}}\fint_{B_{ \kappa y_{d}}(\bdx')}\frac{|u(\bdy', y_{d}) - u(\bdx', x_{d}) |^{p}}{|\vartheta x_{d}| ^{ps}} d\bdy'dy_{d}d x_{d} d\bdx'. 
\]
The integral $J_{1}$ can be estimated as follows using Fubini's theorem and a change of variables: 
\begin{align*}
J_{1}  &= \frac{(1 + \epsilon)\vartheta}{b-a} \int_{\R^{d-1}}\int_{0}^{M}  \int_{ a x_{d}}^{ bx_{d}}\fint_{B_{ \kappa y_{d}}(\bdx')}\frac{|u(\bdx', y_{d}) - u(\bdy', y_{d}) |^{p}}{|\vartheta x_{d}| ^{ps + 1}} d\bdy'dy_{d}d x_{d} d\bdx'\\
& \leq \frac{ (1 + \epsilon)\vartheta}{b-a}\int_{\mathbb{R}^{d-1}} \int_{0}^{M}\left( \int_{{y_{d}\over b}}^{{y_{d}\over a}}\fint_{B_{\kappa y_{d}}(\bdx')}\frac{|u(\bdx',y_{d}) - u(\bdy', y_{d})|^{p}}{|\vartheta x_{d}|^{ps+1}}  d\bdy'dx_{d}\right)dy_{d} d\bdx'
\end{align*}
In the last integral we iterate the integrals first in $x_{d}$  to obtain 
\[
\begin{split}
J_{1} & \leq \frac{ (1 + \epsilon)(b^{sp}-a^{sp})}{sp(b-a)} \int_{\mathbb{R}^{d-1}} \int_{0}^{M} \fint_{B_{\kappa y_{d}}(\bdx')}\frac{|u(\bdx',y_{d}) - u(\bdy', y_{d})|^{p}}{|\vartheta y_{d}|^{ps}} d\bdy'dy_{d} d\bdx' \\
&=\frac{ (1 + \epsilon)(b^{sp}-a^{sp})}{sp(b-a)}  {[u]_{t}^{p}.}
\end{split}
\] 
Next we bound $J_{2}$. Clearly,  since $a x_{d} < y_{d} < b x_{d}$, we have that measure of the ball $B_{\kappa y_{d}} (\bx')$ in $\mathbb{R}^{d-1}$ can be estimated in terms of $a$ and $x_{d}$ as 
\[
\begin{split}
 &J_{2}=  \frac{C_{\epsilon}\vartheta }{b-a} \int_{\R^{d-1}} \int_{0}^{M}  \int_{ a x_{d}}^{ bx_{d}}\fint_{B_{ \kappa y_{d}}(\bdx')}\frac{|u(\bdy', y_{d}) - u(\bdx', x_{d}) |^{p}}{|\vartheta x_{d}| ^{ps +1}} d\bdy'dy_{d}d x_{d} d\bdx'\\
 &\leq    \frac{C_{\epsilon}  \vartheta^{d}}{(b-a)|\kappa a|^{d-1}}\int_{\R^{d-1}} \int_{0}^{M}  \int_{ a x_{d}}^{ bx_{d}}\int_{B_{ \kappa y_{d}}(\bdx')}\frac{|u(\bdy', y_{d}) - u(\bdx', x_{d}) |^{p}}{|\vartheta x_{d}| ^{\mu}} d\bdy'dy_{d}d x_{d} d\bdx'. 
 \end{split}
\]
Now for any $\bdy=(\bdy', y_{d}) \in  B_{ \kappa y_{d}}(\bdx')\times (a x_{d} , bx_{d})$ and $\bdx = (\bdx', x_{d}) \in \mathbb{R}^{d}_{M^+} $, we have that $\by \in \mathbb{R}^{d}_{M^+} $ and 
\[
|\bdy - \bdx|^{2}  = (y_{d}-x_{d})^{2} + |\bdy'-\bdx'|^{2} \leq (a-1)^{2}x_{d}^{2} + \kappa^{2}b^{2} x_{d}^{2} \leq ((a-1)^{2} + \kappa^{2}) x_{d}^{2} < \vartheta^{2} x_{d}^{2}
\] 
where we have chosen $a$ and $\kappa$ in such a way that $(a-1)^{2} + \kappa^{2} < \vartheta^{2}$, which is always possible to do. It follows then that 
\[
J_{2} \leq  \frac{C_{\epsilon }\vartheta^{d}}{(b-a)| \kappa a|^{d-1}}\int_{  \mathbb{R}^{d}_{M^+} }\int_{  \mathbb{R}^{d}_{M^+} \cap B_{\vartheta x_{d}} (\bdx)} \frac{|u(\bdy) -u(\bdx)|^{p}}{|\vartheta x_{d}|^{{\mu} }}d\bdy d\bdx. 
\]
Putting together, we have  just demonstrated that 
\[
[u]_{n}^{p}\leq \tau_{1} [u]_{t}^{p} + C\int_{  \mathbb{R}^{d}_{M^+} }\int_{  \mathbb{R}^{d}_{M^+}  \cap B_{\vartheta x_{d}}(\bx)} \frac{|u(\bdy) -u(\bdx)|^{p}}{|\vartheta x_{d}|^{{\mu} }}d\bdy d\bdx ,\quad \text {with $\tau_{1} = \frac{ (1 + \epsilon)(b^{sp}-a^{sp})}{sp(b-a)}.$ }
\]
We next estimate $[u]_{t}^{p}$. Following the same procedure as above given any $\epsilon > 0$ we can find $C_{\epsilon}$ such that 
\[
\begin{split}
[u]_{t}^{p}&\leq (1 + \epsilon)  \int_{\mathbb{R}^{d-1}}\int_{0}^{M}\fint_{B_{\kappa x_{d}}(\bdx')}\fint_{ax_{d}}^{bx_{d}} \frac{|u(\bdy',x_{d}) - u(\bdy',y_{d})|^{p}}{|\vartheta x_{d}|^{ps}} dy_{d} d\bdy' dx_{d} d\bdx' \\
&+  C_{\epsilon} \int_{\mathbb{R}^{d-1}}\int_{0}^{M}\fint_{B_{\kappa x_{d}}(\bdx')} \fint_{ax_{d}}^{bx_{d}}\frac{|u(\bdy',y_{d}) - u(\bdx', x_{d})|^{p}}{|\vartheta x_{d}|^{ps}} dy_{d} d\bdy' dx_{d} d\bdx'\\
& = J_{3} + J_{4} {.}
\end{split}
\]
Using Fubini's theorem and the observation that  $\bdy' \in B_{\kappa x_{d}}(\bdx')$ if and only if $\bdx' \in B_{\kappa x_{d}}(\bdy')$, $J_{3}$  can be rewritten as
{
 \[
\begin{split}
J_{3}&= (1 + \epsilon)  \int_{\mathbb{R}^{d-1}}\int_{0}^{M}\fint_{B_{\kappa x_{d}}(\bdx')}\fint_{ax_{d}}^{bx_{d}} \frac{|u(\bdy',x_{d}) - u(\bdy',y_{d})|^{p}}{|\vartheta x_{d}|^{ps}} dy_{d} d\bdy' dx_{d} d\bdx'\\
  &= (1 + \epsilon)  \int_{0}^{M} \int_{\mathbb{R}^{d-1}}\fint_{B_{\kappa x_{d}}(\bdy')}\fint_{ax_{d}}^{bx_{d}}\frac{|u(\bdy',x_{d}) - u(\bdy',y_{d})|^{p}}{|\vartheta x_{d}|^{ps}} dy_{d} d\bdx'd\bdy' dx_{d} \\
  &= (1 + \epsilon)  \int_{0}^{M}\int_{\mathbb{R}^{d-1}}\fint_{ax_{d}}^{bx_{d}}\frac{|u(\bdy',x_{d}) - u(\bdy',y_{d})|^{p}}{|\vartheta x_{d}|^{ps} } dy_{d}  d\bdy'dx_{d}\\
  & = (1 + \epsilon)   {[u]_{n}^{p}}. 
\end{split}
 \]
 }
The integral $J_{4}$ can be controlled the same as $J_{2}$ before as follows 
\[
\begin{split}
J_{4}
&\leq \frac{C_{\epsilon} \vartheta}{b-a}\int_{\mathbb{R}^{d-1}}\int_{0}^{M}\fint_{B_{\kappa x_{d}}(\bdx')} \int_{ax_{d}}^{bx_{d}}\frac{|u(\bdy',y_{d}) - u(\bdx',x_{d})|^{p}}{|\vartheta x_{d}|^{ps+1}}dy_{d}d\bdy' dx_{d} d\bdx'\\
& = \frac{C_{\epsilon} \vartheta^{d}}{b-a} \int_{  \mathbb{R}^{d}_{M^+} }\int_{  \mathbb{R}^{d}_{M^+} \cap B_{\vartheta x_{d}} (\bx)} \frac{|u(\bdy) -u(\bdx)|^{p}}{|\vartheta x_{d}|^{{\mu} }}d\bdy d\bdx,
\end{split}
\]
 provided that $ (a-1)^{2} + \kappa^{2} < \vartheta^{2}$.  We thus  {have}
 \begin{equation}
[u]_{t}^{p} \leq \tau_{2} [u]_{n}^{p} + C  \int_{  \mathbb{R}^{d}_{M^+} }\int_{  \mathbb{R}^{d}_{M^+} \cap B_{\vartheta x_{d}} (\bx)} \frac{|u(\bdy) -u(\bdx)|^{p}}{|\vartheta x_{d}|^{{\mu} }}d\bdy d\bdx ,\quad \text{with   $\tau_{2} = (1 + \epsilon)$}
\end{equation}
Now, since  $sp > 1$, by  {Lemma \ref{inequalities}}, we have $ \frac{ (b^{sp}-a^{sp})}{sp(b-a)} < 1$.  {Therefore} we can find $\epsilon $ small such that 
\[
\tau_{1}\tau_{2} =  \frac{ (1 + \epsilon)^{2}(b^{sp}-a^{sp})}{sp(b-a) }< 1. 
\]
In the event that both $[u]_{t}^{p} $ and $[u]_{n}^{p} $ are finite, we obtain \eqref{estimate-for-normal-tangential}. 
\end{proof}
We are now in a position to prove the Hardy-type inequality for functions defined on the stripe $\mathbb{R}^d_{M^+}$ that uses the seminorm  $|\cdot|_{\mathfrak{W}^{s,p}_{\vartheta}(\mathbb{R}^{d}_{M^+})}$. A special case of this theorem is stated in  {Remark \ref{rem:hardy-for-smooth-special}} for the half space. 
\begin{thm}[Hardy-type inequality]\label{hardy-for-smooth}
Suppose that $p\geq 1$, $s\in (0,1]$ such that $sp > 1$. Let also  $\vartheta \in (0, 1]$. There exists a constant $C = C(d, p, s,\vartheta)$ such that for any $M>0$ if  $u\in C^{0,1}(\overline{\mathbb{R}^{d}_{M^{+}}})\cap \mathfrak{W}^{s,p}_{\vartheta}(\mathbb{R}^{d}_{M^+})$ and $u|_{x_{d} = 0} = 0$, then
\[
\left(\int_{\mathbb{R}^{d}_{M^+}}\frac{|u(\bdx)|^{p}}{|x_{d}|^{ps}} d\bdx \right)^{1/p}\leq C |u|_{\mathfrak{W}^{s,p}_{\vartheta}(\mathbb{R}^{d}_{M^+})}.  
\]
\end{thm}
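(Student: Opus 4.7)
The plan is to reduce the multi-dimensional Hardy inequality to the one-dimensional Hardy inequality of Proposition~\ref{1d-trace}, applied slicewise in the direction normal to $\Gamma$, and then absorb the resulting ``normal'' oscillation integral into the full $\Nw$-seminorm via Lemma~\ref{normal-tangent}. The genuine analytic work has already been done in Lemma~\ref{normal-tangent}; the present theorem is essentially an assembly of the slicewise 1D bound with the normal/tangential comparison.

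The first step is to choose constants $0\leq a<b\leq 1$ and $\kappa\in(0,1)$ simultaneously compatible with both Proposition~\ref{1d-trace} and the geometric constraint $(a-1)^2+\kappa^2\leq \vartheta^2$ required by Lemma~\ref{normal-tangent}. A concrete choice valid for every $\vartheta\in(0,1]$ is $b=1$, $a=1-\vartheta/2$, $\kappa=\vartheta/2$, for which $(a-1)^2+\kappa^2=\vartheta^2/2$. With these parameters fixed, for each $\bx'\in\mathbb{R}^{d-1}$ the function $x_d\mapsto u(\bx',x_d)$ belongs to $C^{0,1}([0,M])$ and vanishes at $0$, so Proposition~\ref{1d-trace} applied slicewise and then integrated in $\bx'$ over $\mathbb{R}^{d-1}$ yields
\[
\int_{\mathbb{R}^d_{M^+}}\frac{|u(\bx)|^p}{|x_d|^{ps}} d\bx \leq C(s,p,a,b)\int_{\mathbb{R}^{d-1}}\int_0^M\int_{ax_d}^{bx_d} \frac{|u(\bx',y_d)-u(\bx',x_d)|^p}{|x_d|^{ps+1}} dy_d dx_d d\bx'.
\]
Rewriting the inner $dy_d$-integral via $\int_{ax_d}^{bx_d}(\cdot) dy_d=(b-a)x_d\fint_{ax_d}^{bx_d}(\cdot) dy_d$ and inserting the factor $|\vartheta x_d|^{ps}$ in the denominator, the right-hand side is identified (up to the factor $(b-a)\vartheta^{ps}$) with $[u]_n^p$ from Definition~\ref{defn:directionalnorms}, giving
\[
\int_{\mathbb{R}^d_{M^+}}\frac{|u(\bx)|^p}{|x_d|^{ps}} d\bx \leq C(s,p,a,b)(b-a)\vartheta^{ps} [u]_n^p.
\]

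The final step is to invoke Lemma~\ref{normal-tangent} with our choice of $(a,b,\kappa)$, which delivers $[u]_n \leq C|u|_{\Nw(\mathbb{R}^d_{M^+})}$ with a constant depending on $d,p,s,\vartheta$. Combining this with the previous display and taking $p$-th roots yields the claimed inequality. The only subtlety in the argument is the \emph{joint} admissibility of the constants $a,b,\kappa$ for both the 1D Hardy estimate and the normal/tangential comparison: the constraint $a\geq 1-\vartheta$ (forced by $(a-1)^2\leq\vartheta^2$) means these parameters must be chosen within a $\vartheta$-dependent window, which is exactly why the final constant $C$ inevitably depends on $\vartheta$. No truly new inequality needs to be proved at this stage.
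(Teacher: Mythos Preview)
Your proof is correct and follows essentially the same approach as the paper: apply the one-dimensional Hardy inequality (Proposition~\ref{1d-trace}) slicewise in the $x_d$-direction to bound the weighted integral by $\vartheta^{ps}[u]_n^p$, then invoke Lemma~\ref{normal-tangent} to control $[u]_n$ by the full seminorm. The only cosmetic difference is that you fix explicit admissible parameters $a,b,\kappa$ up front, whereas the paper applies the 1D Hardy bound for generic $a,b$ and then selects the parameters furnished by Lemma~\ref{normal-tangent}; note that as \emph{stated} that lemma only asserts existence of some admissible triple, so strictly speaking you are using that its proof works for every $(a,b,\kappa)$ with $(a-1)^2+\kappa^2<\vartheta^2$, which is indeed the case.
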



\begin{proof}
Recall from \eqref{H-I-inequality} that, given any $0\leq a<b\leq 1$, there exists a positive constant $C$,
 {depending on $\vartheta,p,s$,and $d$} such that 
\[
\begin{split}
\int_{\mathbb{R}^{d}_{M^+}}\frac{|u(\bdx)|^{p}}{|x_{d}|^{ps}} d\bdx &\leq C \int_{\R^{d-1}} \int_{0}^{M} \fint_{a x_{d}}^{bx_{d}} \frac{u(\bdx', y_{d}) - u(\bdx', x_{d})}{|x_{d}|^{ps}} dy_{d} d x_{d} d{\bx'}\\
&= C \vartheta^{ps}  {[u]_{n}^{p}}
\end{split}
\]
where the equality is by definition of the directional derivative.  
We can now apply the Lemma \ref{normal-tangent}
 {and}
choose $a$ and $b$ such that the latter can be bounded by the seminorm $|u|_{\mathfrak{W}^{s, p}_{\vartheta} (\mathbb{R}^{d}_{M^{+}})}$to prove the theorem. 
\end{proof}

The dependence of the constant $C$ in Theorem \ref{hardy-for-smooth} on the parameter $\vartheta$ can be made explicit by using the  {$L^p$-norm} in the right hand side.  
\begin{coro}\label{cor-hardyonRM}
Suppose that $\vartheta \in (0, 1]$, $p\geq 1$, $s\in (0,1)$ such that $sp > 1$.   
Then there exists a constant $C = C(d, p, s)$ but independent of $\vartheta$ such that for any $u\in C^{0,1}(\overline{\mathbb{R}^{d}_{M^+}})\cap \mathfrak{W}^{s,p}_{\vartheta}(\mathbb{R}^{d}_{M^+})$ and $u|_{x_{d}=0} = 0$, 
\[
\int_{\mathbb{R}^{d}_{M^+}}\frac{|u(\bdx)|^{p}}{|x_{d}|^{ps}} d\bdx \leq C \left(\, {\vartheta^{ps- p} 
|u|^{p}_{\mathfrak{W}^{s,p}_{\vartheta}(\mathbb{R}^{d}_{M^+})}
+  M^{-ps} \|u\|^{p}_{L^p(\mathbb{R}^{d}_{M^+})}}
\right). 
\]
\end{coro}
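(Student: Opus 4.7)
The plan is to obtain the $\vartheta$-explicit constant by \emph{decoupling} the two roles that $\vartheta$ plays: the Hardy constant from Theorem~\ref{hardy-for-smooth} is opaque in its $\vartheta$-dependence, but Lemma~\ref{lemma-half-to-vartheta} is specifically designed to exchange one parametrized seminorm for another with all constants written out. So rather than re-running the proof of Theorem~\ref{hardy-for-smooth}, I would apply it at a convenient \emph{fixed} value of the parameter and then use Lemma~\ref{lemma-half-to-vartheta} as a bridge to the seminorm at the given $\vartheta$.

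Concretely, first I would fix the reference value $\theta_0=\tfrac12$. Then Theorem~\ref{hardy-for-smooth} applied with parameter $\theta_0$ yields
\[
\int_{\mathbb{R}^{d}_{M^+}}\frac{|u(\bdx)|^{p}}{|x_d|^{ps}}\,d\bdx
\;\leq\; C(d,p,s)\,|u|^{p}_{\mathfrak{W}^{s,p}_{\theta_0}(\mathbb{R}^{d}_{M^+})},
\]
and the constant here is genuinely independent of $\vartheta$ since $\theta_0$ is now a fixed numerical constant. Next, observe that the hypothesis $\vartheta\in(0,1]$ is exactly the range $\vartheta\in(0,\theta_0/(1-\theta_0)]$ permitted by Lemma~\ref{lemma-half-to-vartheta} when $\theta_0=\tfrac12$, so that lemma applies and gives
\[
|u|^{p}_{\mathfrak{W}^{s,p}_{\theta_0}(\mathbb{R}^{d}_{M^+})}
\;\leq\; C_1\,M^{-ps}\|u\|^{p}_{L^{p}(\mathbb{R}^{d}_{M^+})}
\;+\; C_2\,\vartheta^{ps-p}\,|u|^{p}_{\mathfrak{W}^{s,p}_{\vartheta}(\mathbb{R}^{d}_{M^+})},
\]
where $C_1,C_2$ are the constants from Lemma~\ref{lemma-half-to-vartheta} evaluated at $\theta_0=\tfrac12$, depending only on $d,p,s$. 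Chaining these two inequalities gives the stated bound.

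There is no real obstacle here beyond bookkeeping. The only thing to be mildly careful about is to verify that the admissibility condition $\vartheta\le \theta_0/(1-\theta_0)$ in Lemma~\ref{lemma-half-to-vartheta} is met by the full range $\vartheta\in(0,1]$ allowed in the corollary; the choice $\theta_0=\tfrac12$ is precisely what makes this range match. Any $\theta_0\in[\tfrac12,1)$ would work equally well, and the final constant $C(d,p,s)$ comes out as a fixed multiple of $\max(C_1,C_2)$ times the Hardy constant from Theorem~\ref{hardy-for-smooth} at parameter $\theta_0$.
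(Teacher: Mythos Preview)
Your proposal is correct and is essentially identical to the paper's own proof: apply Theorem~\ref{hardy-for-smooth} at the fixed parameter $\theta_0=\tfrac12$, then invoke Lemma~\ref{lemma-half-to-vartheta} (whose admissibility range $\vartheta\in(0,\theta_0/(1-\theta_0)]$ becomes exactly $(0,1]$ at $\theta_0=\tfrac12$) to pass to the $\vartheta$-seminorm with the explicit $\vartheta^{ps-p}$ factor and the $M^{-ps}\|u\|_{L^p}^p$ correction. Your observation about the choice of $\theta_0$ being tuned to match the range of $\vartheta$ is exactly the point.
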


\begin{proof}
We apply the Theorem \ref{hardy-for-smooth} corresponding to $\vartheta=1/2$ to get a constant $C_{1}$ such that 
\[
\int_{\mathbb{R}^{d}_{M^+}}\frac{|u(\bdx)|^{p}}{|x_{d}|^{ps}} d\bdx \leq C_1 \int_{\mathbb{R}^{d}_{M^+}} \int_{\mathbb{R}^{d}_{M^+}\cap B_{\frac{1}{2} x_{d}} (\bx)}\frac{|u(\bdy) - u(\bdx)|^{p}}{|\frac{1}{2} x_{d}|^{d + ps}}d\bdy d\bdx,
\]
for any $u\in C^{0, 1}(\overline{\mathbb{R}^{d}_{M^+}})\cap \mathfrak{W}^{s,p}_{\vartheta}(\mathbb{R}^{d}_{M^+})$ and $u|_{x_{d}=0} = 0$. 
Next apply Lemma \ref{lemma-half-to-vartheta} corresponding to $\theta_{0}  = 1/2$ to obtain a constant $C$ independent of $\vartheta$ such that  \[
\begin{split}
\int_{\mathbb{R}^{d}_{M^+}}&\frac{|u(\bdx)|^{p}}{|x_{d}|^{ps}} d\bdx \leq C_1 \int_{\mathbb{R}^{d}_{M^+}} \int_{B_{\frac{1}{2} x_{d} }(\bdx) \cap \mathbb{R}^{d}_{M^+} }\frac{|u(\bdy) - u(\bdx)|^{p}}{|\frac{1}{2} x_{d}|^{d + ps}}d\bdy d\bdx\\
&\leq C \vartheta^{ps- p} \int_{\mathbb{R}^{d}_{M^+}} \int_{B_{\vartheta x_{d}}(\bdx) \cap \mathbb{R}^{d}_{M^+} } \frac{|u(\bdy) - u(\bdx)|^{p}}{|\vartheta x_{d}|^{{\mu}}}d\bdy d\bdx + C 2^{p-1}M^{-ps} \int_{\mathbb{R}^{d}_{M^+}}|u(\bdx)|^{p}d\bdx
\end{split}
\] for any $\vartheta \in (0, 1]$, and any $u\in C^{0,1}(\overline{\mathbb{R}^{d}_{M^+}})\cap \mathfrak{W}^{s,p}_{\vartheta}(\mathbb{R}^{d}_{M^+})$ and $u|_{x_{d}=0} = 0$. 
\end{proof}
\begin{remark}
\label{rem:hardy-for-smooth-special}
A simple consequence  Corollary \ref{cor-hardyonRM} is that by letting $M\to \infty$, we get a constant $C=C(d, p,s)$ such that for any $\vartheta\in (0,1]$
\[
\int_{\mathbb{R}^{d}_{+}}\frac{|u(\bdx)|^{p}}{|x_{d}|^{ps}} d\bdx \leq C \,\vartheta^{ps- p} \int_{\mathbb{R}^{d}_{+}} \int_{B_{\vartheta x_{d}}(\bx) }\frac{|u(\bdy) - u(\bdx)|^{p}}{|\vartheta x_{d}|^{{\mu}}}d\bdy d\bdx
\]
 for any $u\in  {C^{0,1}_c(\mathbb{R}^{d}_{+})}$,  {and thus for any $u\in \mathfrak{\mathring{W}}^{s,p}_{\vartheta}(\mathbb{R}^{d}_{+})$ by completion.}
\end{remark}

\section{Trace theorems on stripe domains} 

\label{trace-strip}
In this section we prove trace theorems for functions defined on stripes. We begin estimating norm of the trace of $C^{1}$ functions in terms of  the $\Nw$-norm on $\mathbb{R}^{d}_{M^+}$. More precisely we have the following. 
\begin{thm}\label{Trace-for-half-space}
Let $p\geq 1$, $s\in (0,1]$ such that $sp > 1$. 
There exists a constant $C = C(s, p, d)$ such that for any $u\in C^{0, 1}(\overline{\mathbb{R}^{d}_{M^+}})\cap \mathfrak{W}^{s,p}_{\vartheta}(\mathbb{R}^{d}_{M^+})$
and for any $\vartheta \in (0, 1]$ we have 
\[
\|u\|^{p}_{L^p(\Gamma)}
\leq C\left( M^{-1} \| u\|^{p}_{L^{p}(\mathbb{R}^{d}_{M^+})} + \vartheta^{ps-p}  M^{ps-1}
{
|u|^{p}_{\mathfrak{W}^{s,p}_{\vartheta}(\mathbb{R}^{d}_{M^+})}
}
\right).
\]
Moreover, 
\[
|u|^{p}_{W^{s-1/p, p}(\Gamma)} \leq  C \left(M^{-ps} \|u\|^{p}_{L^{p}(\mathbb{R}^{d}_{M^+})} + \vartheta^{ps-p}
{
|u|^{p}_{\mathfrak{W}^{s,p}_{\vartheta}(\mathbb{R}^{d}_{M^+})}
}
 \right).
\]
where  $|\cdot|_{W^{s-1/p, p}(\Gamma)}$ is the usual fractional Sobolev norm on the hypersurface $\Gamma = \mathbb{R}^{d-1}\times \{0\}$. 
\end{thm}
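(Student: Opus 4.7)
The theorem asserts two bounds on the trace of $u$ on $\Gamma$. The plan is to derive both from the one-dimensional Hardy-type inequality of Proposition \ref{1d-trace} together with the comparison estimates $[u]_n,\,[u]_t\le C|u|_{\mathfrak{W}^{s,p}_\vartheta(\mathbb{R}^{d}_{M^+})}$ from Lemma \ref{normal-tangent}, where $[u]_n,[u]_t$ are the normal and tangential seminorms of Definition \ref{defn:directionalnorms}. By density it suffices to work with $u\in C^{0,1}_c(\overline{\mathbb{R}^{d}_{M^+}})$. At various points a factor $\vartheta^{ps}$ will arise naturally; since $\vartheta\in(0,1]$ and $ps-p\le ps$ give $\vartheta^{ps}\le\vartheta^{ps-p}$, the stated $\vartheta^{ps-p}$ prefactor is free.

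For the $L^p$ trace estimate, Lemma \ref{inequalities}(1) applied with $\xi=|u(\bx',0)|/|u(\bx',x_d)|$ (the case $u(\bx',x_d)=0$ is trivial) yields
\[
|u(\bx',0)|^p \le C_\epsilon\,|u(\bx',0)-u(\bx',x_d)|^p + (1+\epsilon)|u(\bx',x_d)|^p,\qquad x_d\in(0,M).
\]
Averaging in $x_d\in(0,M)$ and integrating in $\bx'\in\mathbb{R}^{d-1}$, the $|u(\bx',x_d)|^p$ piece contributes $M^{-1}\|u\|_{L^p(\mathbb{R}^{d}_{M^+})}^p$ directly. For the remaining piece I use $x_d\le M$ to insert the weight $M^{ps}/x_d^{ps}$, apply Proposition \ref{1d-trace} to $v(x_d):=u(\bx',x_d)-u(\bx',0)$ (which vanishes at $x_d=0$), and recognise the outcome, after integration in $\bx'$, as a constant multiple of $\vartheta^{ps}[u]_n^p$; Lemma \ref{normal-tangent} then finishes.

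For the $W^{s-1/p,p}(\Gamma)$ estimate I split the double integral on $\Gamma\times\Gamma$ by $r:=|\bx'-\by'|$. On $\{r\ge M\}$, the elementary bound $|u(\bx',0)-u(\by',0)|^p\le 2^{p-1}(|u(\bx',0)|^p+|u(\by',0)|^p)$ together with the tail computation $\int_{|\bdw|\ge M}|\bdw|^{-(d+sp-2)}d\bdw=CM^{1-sp}$ (valid because $sp>1$) reduces that region to $CM^{1-sp}\|u(\cdot,0)\|_{L^p(\Gamma)}^p$, which the first part of the theorem converts into the required form. On the near-diagonal region $\{r<M\}$ I fix $\alpha\in(0,\kappa)$ and use the three-piece split
\[
u(\bx',0)-u(\by',0)=[u(\bx',0)-u(\bx',x_d)]+[u(\bx',x_d)-u(\by',x_d)]+[u(\by',x_d)-u(\by',0)],
\]
raised to the $p$-th power by Jensen's inequality and averaged over $x_d\in(0,\alpha r)$. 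The two normal pieces are handled as in the $L^p$ step: swap the order of integration, use $\int_{|\bdw|\ge x_d/\alpha}|\bdw|^{-(d-1+sp)}d\bdw\le Cx_d^{-sp}$, and apply Proposition \ref{1d-trace} and Lemma \ref{normal-tangent} to produce a $\vartheta^{ps-p}|u|_{\mathfrak{W}^{s,p}_\vartheta}^p$ contribution.

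The main technical obstacle is the tangential piece $|u(\bx',x_d)-u(\by',x_d)|^p$ at scale $r>x_d/\alpha$, which is far outside the range $r\le\kappa x_d$ covered by $[u]_t$. My plan is to bridge this gap by a telescoping chain argument in the spirit of Lemma \ref{lemma-half-to-vartheta}: subdivide the segment from $\bx'$ to $\by'$ in $\mathbb{R}^{d-1}$ into $N=\lceil r/(\kappa x_d)\rceil$ pieces of length at most $\kappa x_d$, apply the discrete Jensen inequality $|\sum_k a_k|^p\le N^{p-1}\sum_k|a_k|^p$, and then use translation invariance of the $\bx'$-integral to consolidate the $N$ terms into a single tangential increment at a scale in the range of $[u]_t$. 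A careful accounting of the combinatorial factor $N^p$ against the polar integral $\int_{x_d/\alpha}^{\infty}r^{d-2-(d-1+sp)}dr$ should yield a quantity bounded by $[u]_t^p$, and one final appeal to Lemma \ref{normal-tangent} produces the required $\vartheta^{ps-p}|u|_{\mathfrak{W}^{s,p}_\vartheta}^p$ bound. Assembling the three contributions then completes the proof of the second inequality.
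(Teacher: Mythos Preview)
Your treatment of the $L^p(\Gamma)$ bound and of the far-diagonal and normal contributions to the seminorm follows the right pattern. One real but secondary issue: invoking Lemma~\ref{normal-tangent} directly at an arbitrary $\vartheta$ gives a constant that \emph{depends on} $\vartheta$ (the proof forces $(a-1)^2+\kappa^2<\vartheta^2$, so the constants degenerate as $\vartheta\to0$). The remark that $\vartheta^{ps}\le\vartheta^{ps-p}$ does not address this. The paper instead applies Lemma~\ref{normal-tangent} at a \emph{fixed} parameter (namely $\tfrac12$, or later $\theta_0$) and then uses Lemma~\ref{lemma-half-to-vartheta} to pass to general $\vartheta$; this is exactly where the explicit $\vartheta^{ps-p}$ prefactor and the extra $M^{-ps}\|u\|_{L^p}^p$ term originate.

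The substantive gap is the tangential middle term. By averaging $x_d$ over $(0,\alpha r)$ with $\alpha<\kappa$ you force $r=|\bx'-\by'|>x_d/\alpha\gg\kappa x_d$, so the number of telescoping steps $N=\lceil r/(\kappa x_d)\rceil$ is unbounded as $x_d\to0^+$. After translation invariance and the change of variables, the surviving combinatorial factor is of order $N^{p-sp}$; carrying out your ``careful accounting'' amounts to inserting $N^p\sim (r/(\kappa x_d))^p$ into the polar integral, giving $(\kappa x_d)^{-p}\int_{x_d/\alpha}^{M} r^{\,p-sp-1}\,dr$, which is only finite at the upper limit if $sp>p$, i.e.\ $s>1$. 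For $s<1$ the resulting $x_d$-weight exceeds the $x_d^{-ps}$ needed for $[u]_t^p$ by a factor $\sim (M/x_d)^{p-sp}$, which blows up. The analogy with Lemma~\ref{lemma-half-to-vartheta} is misleading: there $n$ is a single fixed integer chosen so that $\theta_n\approx\vartheta$, not a variable tending to infinity.

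The paper avoids this entirely by averaging both $x_d$ and $y_d$ over $(\alpha|{\bf h}|,\beta|{\bf h}|)$ with $1<\alpha<\beta\le 2$, so that $x_d$ is \emph{comparable} to $r$ rather than much smaller, and by replacing your purely tangential middle term with the full increment $|u(\by',y_d)-u(\bx',x_d)|^p$. With $\alpha,\beta$ chosen so that $\theta_0^2:=\frac{(\beta-\alpha)^2+1}{\alpha^2}<1$, one has $(\by',y_d)\in B_{\theta_0 x_d}(\bx)$, and this term is bounded directly by $|u|^p_{\mathfrak{W}^{s,p}_{\theta_0}(\mathbb{R}^d_{M^+})}$ with no telescoping at all; Lemma~\ref{lemma-half-to-vartheta} then converts $\theta_0$ to general $\vartheta$.
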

\begin{remark}
\label{rem:trace_half_space}
It will be clear in the proof that in the event that $\Omega = \mathbb{R}^{d}_{+}$, the inequalities in the theorem can be combined to yield the inequality that   
\[
\|u\|^{p}_{W^{s-1/p, p}(\Gamma)} \leq  C(p, d, s) \left(\|u\|^{p}_{L^{p}(\mathbb{R}^{d}_{+})
} + \vartheta^{ps-p} |u|^{p}_{\mathfrak{W}^{s, p}_{\vartheta}(\mathbb{R}^d_{M^+})}
  \right).
\]
\end{remark}
\begin{proof}
Since $u\in C^{0,1}(\overline{\mathbb{R}_{M^+}^{d}})$, pointwise evaluation on the boundary is well defined. For any $x_{d} >0$, we have that  
\[
|u(\bdx',0)|^{p} \leq 2^{p-1}( |u(\bdx', x_{d})|^{p}  +  |u(\bdx',x_{d}) - u(\bdx',0)|^{p} ,  \text{for all $\bdx'\in \mathbb{R}^{d-1}$}. 
\]
Taking the average  in $x_{d}$ over the interval $[0, M]$ first and then integrating in $\bdx'$ over $\mathbb{R}^{d-1}$, we obtain that
\begin{equation}\label{Tr-lp1}
\begin{split}
\int_{\mathbb{R}^{d-1}}|u(\bdx',0)|^{p}d\bdx' &\leq 2^{p-1}\left( \frac{1}{M}\int_{\mathbb{R}^{d-1}}\int_{0}^{M}|u(\bdx', x_{d})|^{p}dx_d d\bdx'  \right.\\
&+ \left.M^{ps-1} \int_{\mathbb{R}^{d-1}}\int_{0}^{M}
\frac{|u(\bdx',x_{d}) - u(\bdx',0)|^{p}}{|x_{d}|^{ps}}d x_{d} d\bdx'\right). 
\end{split}
\end{equation} 
We note that for each $\bdx' \in \mathbb{R}^{d-1}$ the function $x_{d} \mapsto u(\bdx',x_{d}) - u(\bdx',0)$ is in $C^{0, 1}[0, M]$ and vanishes on the hyperplane  $x_{d}= 0$. We may then apply the one space dimension Hardy's inequality  Proposition \ref{1d-trace} to obtain that given any $a,b$ such that $0\leq a < b \leq 1$,  there exists a corresponding constant $C > 0$ such that 
\[
\begin{split}
\int_{0}^{M}\frac{|u(\bdx',x_{d}) - u(\bdx',0)|^{p}}{|x_{d}|^{ps}}d x_{d}  &\leq C \int_{0}^{M} \int_{ax_{d}}^{bx_{d}} \frac{|u(\bdx',y_{d}) - u(\bdx',x_{d})|^{p}}{|x_{d}|^{ps + 1}}dy_{d} dx_{d}\\
& \leq C(b-a) \int_{0}^{M} \fint_{ax_{d}}^{bx_{d}}  \frac{|u(\bdx',y_{d}) - u(\bdx',x_{d})|^{p}}{|x_{d}|^{ps }}dy_{d} dx_{d}.\\
\end{split}
\]
We integrate the $\bdx'$ variables on $\mathbb{R}^{d-1}$ and rewrite it to obtain  
\[
\begin{split}
\int_{\R^{d-1}}\int_{0}^{M}&\frac{|u(\bdx',x_{d}) - u(\bdx',0)|^{p}}{|x_{d}|^{ps}}d x_{d} d\bdx'\\
&\leq \frac{C(b-a)}{2^{ps}} \int_{\mathbb{R}^{d-1}}\int_{0}^{M} \fint_{ax_{d}}^{bx_{d}} \frac{|u(\bdx',y_{d}) - u(\bdx', x_{d})|^{p}}{|\frac{1}{2}x_{d}|^{ps}}dy_{d} dx_{d} d\bdx' .
\end{split}
\]
The integral in the right hand side is precisely $  {[u]_{n}^{p}}$ corresponding to $\vartheta = 1/2$. 
 We now apply Lemma \ref{normal-tangent} to conclude that there exists constants $a, b,$ and $C$ such that 
 \[
 \int_{\R^{d-1}}\int_{0}^{M}\frac{|u(\bdx',x_{d}) - u(\bdx',0)|^{p}}{|x_{d}|^{ps}}d x_{d} d\bdx'
 \leq C \int_{\mathbb{R}^{d}_{M^+}} \int_{B_{\frac{1}{2} x_{\delta} }(\bdx)\cap \mathbb{R}^{d}_{M^+}  } \frac{|u(\bdy) - u(\bdx)|^{p}}{|\frac{1}{2}x_{d}|^{d + ps}} d\bdy d\bdx. 
 \]
Lemma \ref{lemma-half-to-vartheta} in turn guarantees the existence of a constant $C$ independent of $\vartheta$ such that 
\[\begin{split}
\int_{\R^{d-1}}\int_{0}^{M}&\frac{|u(\bdx', x_{d}) - u(\bdx',0)|^{p}}{|x_{d}|^{ps}}d x_{d} d\bdx' \\
&\leq C \left(\vartheta^{ps-p} \int_{\mathbb{R}^{d}_{M^+}} \int_{ B_{\vartheta x_{d}}(\bx)\cap  \mathbb{R}^{d}_{M^+}} \frac{|u(\bdy) - u(\bdx)|^{p}}{|\vartheta x_{d}|^{{\mu} }} d\bdy d\bdx +  M^{-ps} \|u\|_{L^{p}(\mathbb{R}^{d}_{M^+})}^{p}\right), 
\end{split}
\]
for all $\vartheta \in (0, 1].$
Together with the inequality \eqref{Tr-lp1}, we obtain that 
\[
\int_{\mathbb{R}^{d-1}}|u(0, \bdx')|^{p}d\bdx' \leq C\left( M^{-1} \| u\|^{p}_{L^{p}(\mathbb{R}^{d}_{M^+})} + \vartheta^{ps-p}  M^{ps-1} 
{
|u|^{p}_{\mathfrak{W}^{s,p}_{\vartheta, M}(\mathbb{R}^{d}_{M^+})}
}
\right),
\]
which proves the first inequality of the theorem. 
Let us prove the second inequality.
 By definition the seminorm $|u|_{W^{s-1/p, p}(\Gamma)} $ is given by 
 \[
 |u|^{p}_{W^{s-\frac{1}{p}, p}(\Gamma)}  = \int_{\mathbb{R}^{d-1}}\int_{\mathbb{R}^{d-1}} \frac{|u(\bdx',0) - u(\bdy',0)|^{p}}{|\bdx'-\bdy'|^{d + ps-2}}d\bdx'd\bdx. 
 \]
 We divide the domain of integration as follows 
\[\begin{aligned}
   |u|^{p}_{W^{s-\frac{1}{p}, p}(\Gamma)}&=  \int_{\mathbb{R}^{d-1}}\int_{B_{\frac{M}{2}}(\bdx')} \frac{|u(\bdx', 0) - u(\bdy',0)|^{p}}{|\bdx'-\bdy'|^{d + ps-2}}d\bdy'd\bdx' \\
&\qquad  +  \int_{\mathbb{R}^{d-1}}\int_{B_{\frac{M}{2}}(\bdx')^{C}} \frac{|u(\bdx',0) - u(\bdy',0)|^{p}}{|\bdx'-\bdy'|^{d + ps-2}}d\bdy'd\bdx'\\
  &= I_{1} + I_{2},
 \end{aligned}\]
 where $B_{\frac{M}{2}}(\bdx')^{C}$  represents the complement of $B_{\frac{M}{2}}(\bdx')$.
Estimating $I_{2}$ is relatively simple since 
 \[\begin{split}
   I_{2}&\leq  
   \int_{\mathbb{R}^{d-1}}\int_{B_{\frac{M}{2}}(\bdx')^{C}}\frac{ |u(\bdx',0) - u(\bdy',0)|^{p}}{|\bdx'-\bdy'|^{d + ps-2}}d\bdy'd\bdx' \\
  & \leq \int_{\{|\bdy'| \geq \frac{M}{2}\} }\int_{\mathbb{R}^{d-1}} \frac{|u(\bdx',0) - u( \bdx' + \bdy',0)|^{p}}{|\bdy'|^{d + ps-2}}d\bdx'd\bdy' \\
   & \leq 2^{p-1} \|u(\cdot,0 )\|_{L^{p}}^{p}  \int_{\{|\bdy'| \geq \frac{M}{2}\} } \frac{1}{|\bdy'|^{d + ps-2}} \leq C \|u(\cdot,0 )\|_{L^{p}(\mathbb{R}^{d-1})}^{p} \\
   &\leq C(p,s,d) M^{-ps + 1}  \|u(0,\cdot )\|_{L^{p}}^{p}
 \end{split}
 \]
 where we have used the assumption that $ps > 1$ that will imply the finiteness of the integral.  
 
 Let us estimate  the first term $I_{1}$.
      
      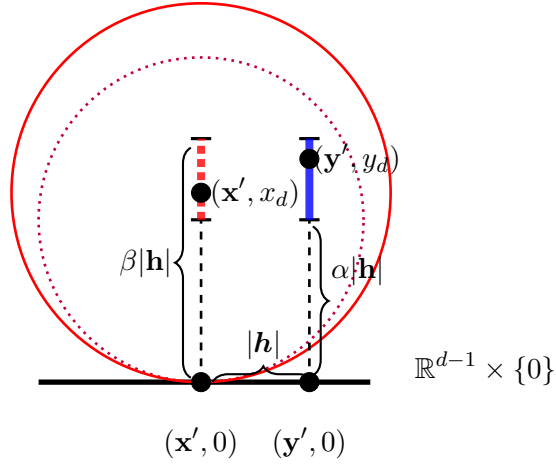
\begin{figure}[htbp] 
\centering
   \begin{tikzpicture}[scale=.9]
     \draw[thick, line width=2pt] (-2.4,0)--(2.5,0);
             \draw[dotted, line width=1pt, purple](0,2.4) circle(2.4);
        \draw[line width=1pt, red](0,2.8) circle(2.8);
        
      \node at (0,-0.9) {$(\bx',0)$};
       \node at (1.6,-0.9) {$(\by',0)$};
       
       \draw [decorate,thick, decoration={brace,amplitude=6pt},xshift=2pt]
(0.1,0) -- (1.6, 0) node [black,midway,yshift=0.5cm] {$|{\bm h}|$};

       \draw [decorate,thick, decoration={brace,amplitude=6pt},yshift=-1pt]
(-0.15,0.1)--(-0.15,3.5)  node [black,midway,xshift=-0.6cm] {$\beta|{\bf h}|$};

       \draw [decorate,thick, decoration={brace,amplitude=6pt},xshift=-0.1cm]
(1.75,2.3)--(1.75,0.1)node [black,midway,yshift=0.4cm,xshift=0.6cm]{$\alpha|{\bf h}|$};

       \draw[thick, line width=1pt, dashed] (1.6,0)--(1.6,2.4);  
       \draw[thick,line width=1pt, dashed] (0,0)--(0,2.4);  
       \draw (0,0) circle(4pt) [fill] ;
      \draw (1.6,0) circle(4pt) [fill] ;
            
       \draw[very thick] (-0.15,2.4)--(0.15,2.4);
       \draw[very thick] (-0.15,3.6)--(0.15,3.6);
       \draw[very thick] (1.6-0.15,2.4)--(1.6+0.15,2.4);
       \draw[very thick] (1.6-0.15,3.6)--(1.6+0.15,3.6);
       
       \draw[line width=3pt,red,dotted,opacity=0.8] (0,2.4)--(0,3.6);
        \draw[line width=3pt,blue,opacity=0.8] (1.6,2.4)--(1.6,3.6);
        \draw (0,2.8) circle(4pt) [fill];

       \draw (1.6, 3.3) circle(4pt) [fill] ;

       \node at (2.3, 3.3) {$(\by',y_{d})$};

       \node at (0.8,2.8) {$(\bx',x_{d})$};
       
        \node at (4.2,0.2) {$\R^{d-1}\times \{0\}$};

      \end{tikzpicture}

      \caption{Depiction of geometry  used in the proof of Theorem \ref{Trace-for-half-space}.}\label{fig:main}
      \end{figure}
The idea is again to split the left-hand side into three parts that can be controlled by the right hand side.  
As shown in Figure \ref{fig:main},
we choose $(\bx', x_{d}),(\bby,y_{d})\in\Om$ and rewrite 
\begin{align*}
u(\by',0)&= u(\by', 0)-u(\by',y_{d}) +u(\by',y_{d})\\
u(\bx',0)&=u(\bx',0)-u(\bx',x_{d})+u(\bx', x_{d})\,.
\end{align*}
Notice that the blue solid horizontal line and the red horizontal dashed line in Figure \ref{fig:main} show the possible
positions of
 $(\bx', x_{d})$ and $(\by',y_{d})$ respectively. 
The key is to determine the end points of these lines
 so that any $(\by', y_{d})$ over the blue solid line should stand in the effective neighborhood  (shown as red solid circle)  of 
any $(\bx', x_{d})$ on the red horizontal dashed line, in particular, the bottom end point whose
effective neighborhood is given by the dashed purple circle.
 Note that for any $y_{d} , x_{d}\in (0, M)$, we can write 
 \[
 \begin{split}
&|u(\bdx',0) - u(\bdy', 0)|^{p} \\
&\quad\leq 3^{p-1}\left( |u(\bdx', 0) - u(\bdx', x_{d})|^{p}  + |u(\bdy', y_{d}) - u(\bdx', x_{d})|^{p} +|u(\bdy', y_{d}) - u(\bdy', 0)|^{p} \right). 
\end{split}
 \]
 Let us denote ${\bf h} := \bdx'-\bdy'$. For  $1\leq \alpha < \beta \leq 2$ to be determined later, integrating first in $y_{d}$ in the interval $(\alpha|{\bf h}|, \beta|{\bf h}|)$, and then in the $x_d$ variable over the interval   $ (\alpha|{\bf h}|, \beta|{\bf h}|)$, we obtain that 
 \[
 \begin{split}
3^{1-p}|u(\bdx',0) - u(\bdy', 0)|^{p} &\leq \fint_{\alpha|{\bf h}|}^{\beta|{\bf h}|} |u(\bdx',0) - u(\bdx',x_{d} )|^{p} dx_{d}  + \fint_{\alpha|{\bf h}|}^{\beta|{\bf h}|}|u(\bdy',y_{d}) - u(\bdy',0)|^{p}dy_{d} \\
 &+ \fint_{\alpha|{\bf h}|}^{\beta|{\bf h}|}\fint_{ {\alpha|{\bf h}|}}^{\beta|{\bf h}|}|u(\bdy', y_{d}) - u(\bdx', x_{d})|^{p}dy_{d}d x_{d}. 
 \end{split}
 \]
 It follows then that 
 \[
 \begin{split}
 3^{1-p} I_{1}&\leq  \int_{\mathbb{R}^{d-1}}\int_{B_{\frac{M}{2}}(\bdx')} \fint_{\alpha|{\bf h}|}^{\beta|{\bf h}|} \frac{|u(\bdx',0) - u(\bdx',x_{d} )|^{p}} {|\bdx'-\bdy'|^{d + ps-2}}dx_{d}d\bdy'd\bdx'\\
  & + \int_{\mathbb{R}^{d-1}}\int_{B_{\frac{M}{2}}(\bdx')} \fint_{\alpha|{\bf h}|}^{\beta|{\bf h}|} \frac{|u(\bdy',y_{d}) - u(\bdy',0)|^{p}}{|\bdx'-\bdy'|^{d + ps-2}}dy_{d} d\bdy' d\bdx'\\
  &+  \int_{\mathbb{R}^{d-1}}\int_{B_{\frac{M}{2}}(\bdx')} \fint_{\alpha|{\bf h}|}^{\beta|{\bf h}|}\fint_{ {\alpha|{\bf h}|}}^{\beta|{\bf h}|}\frac{|u(\bdy', y_{d}) - u(\bdx', x_{d})|^{p}}{|\bdx'-\bdy'|^{d + ps-2}}dy_{d}d x_{d}d\bdy'd\bdx'.
 \end{split}
 \]
 It is not difficult to see that the first two integrals are equal   after a change of variables. 
 Therefore after rewriting it as $I_{1} \leq I_{11} + I_{12}$, we only need to estimate $I_{11}$ and $I_{12}$  which are given by  
 \[
 I_{11} = 2 \int_{\mathbb{R}^{d-1}}\int_{B_{\frac{M}{2}}(\bdx')} \fint_{\alpha|{\bf h}|}^{\beta|{\bf h}|} \frac{|u(\bdx',0) - u(\bdx',x_{d} )|^{p}} {|\bdx'-\bdy'|^{d + ps-2}}dx_{d}d\bdy'd\bdx'
 \]
 and 
 \[
 I_{12}=  \int_{\mathbb{R}^{d-1}}\int_{B_{\frac{M}{2}}(\bdx')} \fint_{\alpha|{\bf h}|}^{\beta|{\bf h}|}\fint_{ {\alpha|{\bf h}|}}^{\beta|{\bf h}|}\frac{|u(\bdy', y_{d}) - u(\bdx', x_{d})|^{p}}{|\bdx'-\bdy'|^{d + ps-2}}dy_{d}d x_{d}d\bdy'd\bdx'.
 \]
 To estimate $I_{11}$, we first  {change the integration with respect to $\bdy'$ to that with respect to ${\bf h} = \bdx'-\bdy'$ and obtain that}
 \[
 \begin{split}
 I_{11} &= 2 \int_{\mathbb{R}^{d-1}}\int_{B_{\frac{M}{2}}(\bdx')} \fint_{\alpha|{\bf h}|}^{\beta|{\bf h}|}\frac{|u(\bdx',x_{d}) - u(\bdx',0)|^{p}}{|\bx'-\by'|^{d + ps-2}}d x_{d} d\bdy'd\bdx'\\
 & =\frac{2 }{\beta-\alpha}  \int_{\mathbb{R}^{d-1}}\int_{B_{\frac{M}{2}}(\bm 0')} \int_{\alpha|{\bf h}|}^{\beta|{\bf h}|}\frac{|u(\bdx',x_{d}) - u(\bdx',0)|^{p}}{|{\bf h}|^{d + ps-1}}d x_{d} d{\bf h}d {\bdx'} .
 \end{split}\]
 Integrating the  {latter} using polar coordinates  {and letting $h = |{\bf h}|$,} we obtain that 
 and then iterating the integrals by change of variables, we obtain that 
 \[
 \begin{split}
I_{11} &=\frac{2\mathcal{H}^{d-2}(\mathbb{S}^{d-2})}{\beta-\alpha} \int_{\mathbb{R}^{d-1}}  \left(\int_{0}^{\frac{M}{2}} \int_{\alpha h}^{\beta h}\frac{|u(\bdx',x_{d}) - u(\bdx',0)|^{p}}{|h|^{d + ps-1}}h^{d-2} dx_{d} dh \right) d\bdy'd\bdx'\\
 &=\frac{2 \mathcal{H}^{d-2}(\mathbb{S}^{d-2})}{\beta-\alpha} \int_{\mathbb{R}^{d-1}}  \int_{0}^{M} \left(\int_{\frac{x_{d}}{\beta}}^{\frac{x_{d}}{\alpha}}\frac{h^{d-2}}{|h|^{d + ps-1}}dh \right) |u(\bdx',x_{d}) - u(0, \bdx')|^{p}dx_{d}  d\bdy'd\bdx'\\
&= \frac{2\mathcal{H}^{d-2}(\mathbb{S}^{d-2}) (\beta^{ps} - \alpha^{ps})}{ps(\beta-\alpha)} \int_{\mathbb{R}^{d-1}}  \int_{0}^{M} \frac{ |u(\bdx',x_{d}) - u(\bdx', 0)|^{p}}{|x_{d}|^{ps}}dx_{d} d\bdx'.\\
 \end{split}
 \]
Now a combination of Lemma \ref{1d-trace} and Lemma \ref{lemma-half-to-vartheta}  {implies that} there exists a constant $C$ independent of $\vartheta$ such that 
\[
\begin{split}
\int_{\mathbb{R}^{d-1}}\int_{0}^{M}&\frac{|u(\bdx',x_{d}) - u(\bdx',0)|^{p}}{|x_{d}|^{ps}}dx_{d} d\bdx'\\
& \leq C\left( \vartheta^{ps-p} \int_{\mathbb{R}^{d}_{M^+}} \int_{B_{\vartheta x_{d}}} \frac{|u(\bdy) - u(\bdx)|^{p}}{|\vartheta x_{d}|^{d+ ps}} d\bdy d\bdx +  M^{- ps} \|u\|_{L^{p}(\mathbb{R}^{d}_{M^+})}^{p}\right).
\end{split}
\]
Combining the above estimates, we obtain the desired estimate for $I_{11}$. 
Next we estimate $I_{12}$. Turns out that we will impose conditions on $\alpha$ and $\beta$ to get a control of  $I_{12}$.  To that end, recalling ${\bf h} = \bdx'-\bdy'$, we have 
 \[
\begin{split}
I_{12} &= \frac{1}{(\beta-\alpha)^{2}} \int_{\mathbb{R}^{d-1}}\int_{B_{M/2}({\bm 0'})} \int_{\alpha|{\bf h}|}^{\beta|{\bf h}|} \int_{\alpha|{\bf h}|}^{\beta|{\bf h}|}\frac{|u(\bdx' + {\bf h}, y_{d}) - u(\bdx', x_{d})|^{p}}{|{\bf h}|^{d + ps}}dy_{d}  dx_{d} d{\bf h} d\bdx'\\
& \leq \frac{ \beta^{d + ps}}{(\beta-\alpha)^{2}} \int_{\mathbb{R}^{d-1}}\int_{B_{M/2}({\bm 0'})} \int_{\alpha|{\bf h}|}^{\beta|{\bf h}|} \int_{\alpha|{\bf h}|}^{\beta|{\bf h}|}\frac{|u( \bdx' + {\bf h}, y_{d}) - u(\bdx', x_{d})|^{p}}{|x_{d}|^{d + ps}}dy_{d}  dx_{d} d{\bf h} d\bdx'
\end{split}
\]
where we used the fact that the integration the $x_{d}$ variable ranges from $\alpha |{\bf h}| $ to $ \beta |{\bf h}|$, and $\beta \leq 2$.   {Using polar coordinates for the integration on the $d-1$ dimensional ball $B_{\frac{M}{2}}(\bm 0')$}, we obtain that  
 {\small  \[
\begin{split}
I_{12} 
&\leq \frac{ \beta^{d + ps}}{(\beta-\alpha)^{2}} \int_{\mathbb{R}^{d-1}} \int_{\mathbb{S}^{d-2}} \left(\int_{0}^{\frac{M}{2}} \int_{\alpha h}^{\beta h} \int_{\alpha h}^{\beta h}\frac{|u(\bdx' + h \boldsymbol{\nu},y_{d} ) - u(\bdx', x_{d})|^{p}}{|x_{d}|^{d + ps}} h^{d-2}  dy_{d}  dx_{d} dh \right)  d\mathcal{H}^{d-2}(\boldsymbol{\nu}) d\bdx'
\end{split}
\]}
By iterating the integrals and using Fubini, we have that 
{\small \[
\begin{split}
I_{12} &\leq  \frac{\beta^{d + ps}}{(\beta-\alpha)^{2}} \int_{\mathbb{R}^{d-1}} \int_{\mathbb{S}^{d-2}}\left( \int_{0}^{M} \int_{\frac{x_{d}}{\beta}}^{\frac{x_{d}}{\alpha}} \int_{\alpha h}^{\beta h}\frac{|u( \bdx' + h \boldsymbol{\nu},y_{d} ) - u(\bdx',x_{d})|^{p}}{|x_{d}|^{d + ps}} h^{d-2}  dy_{d}  dh\right)  d\mathcal{H}^{d-2}(\boldsymbol{\nu})dx_{d} d\bdx'\\
&= \frac{\beta^{d + ps}}{(\beta-\alpha)^{2}} \int_{\mathbb{R}^{d-1}} \int_{0}^{M} \left(  \int_{\mathbb{S}^{d-2}}\int_{\frac{x_{d}}{\beta}}^{\frac{x_{d}}{\alpha}} \int_{\alpha h}^{\beta h}\frac{|u( \bdx' + h \boldsymbol{\nu},y_{d}) - u(\bdx',x_{d})|^{p}}{|x_{d}|^{d + ps}} h^{d-2}  dy_{d}  dhd\mathcal{H}^{d-2}(\boldsymbol{\nu})\right)  dx_{d} d\bdx'\\
&\leq \frac{\beta^{d + ps}}{(\beta-\alpha)^{2}} \int_{\mathbb{R}^{d-1}} \int_{0}^{M} \left(  \int_{\frac{x_{d}}{\beta} \leq |\bdy'-\bdx'| \leq \frac{x_{d}}{\alpha} } \int_{\alpha |\bdy'-\bdx'|}^{\beta |\bdy'-\bdx'|}\frac{|u(\bdy', y_{d}) - u(\bdx',x_{d})|^{p}}{|x_{d}|^{d + ps}}dy_{d}  d\bdy'\right)  dx_{d} d\bdx'.
\end{split}
\]}
Now for $\bdx = ( \bdx', x_{d}), \bdy = (\bdy', y_{d})\in \mathbb{R}^{d}$, with the property that  $y_{d} \in [\alpha |\bdy'-\bdx'|, \beta |\bdy'-\bdx'|]$ and $|\bdy'-\bdx'|  \in \left[\frac{x_{d}}{\beta}, \frac{x_{d}}{\alpha}\right]$, we have that, for 
$\theta_0^2=\frac{(\beta-\alpha)^{2} + 1}{\alpha^{2}}$,
\[
|\bdy - \bdx|^{2}  = (x_{d}- y_{d})^{2} + |\bdy' - \bdx'|^{2} \leq \max\left\{ \left(1 - \frac{\beta}{\alpha}\right)^{2} + \frac{1}{\alpha^{2}},  \left(1 - \frac{\alpha}{\beta}\right)^{2} + \frac{1}{\alpha^{2}}\right\} x_{d}^{2} \leq \theta_0^2 x_{d}^{2}
\]
since $\alpha <\beta$. Note that  for all $1 < \alpha < \beta \leq 2$, 
the quantity  
$\theta_0^2
\in (\frac{1}{2}, 2)$. We further choose $\alpha$ and $\beta$ such that $ 
\theta_0^2 \in (\frac{1}{2}, 1)$, which is possible,  
for example with $\beta =2$ and $\alpha = \frac{3}{2}$.
Using these values of $\alpha$ and $\beta$, we get
$\theta_{0}^{2}  
= \frac{5}{9}$, and
we can estimate $I_{12}$ as 
\[
I_{12} \leq C(d,p,s)\int_{\mathbb{R}^{d}_{M^+}} \int_{B_{\theta_{0} x_{d}}(\bdx) }\frac{|u(\bdy) - u(\bdx)|^{p}}{|x_{d}|^{d + ps}}d\bdy d\bdx.
\]
We now apply Lemma \ref{lemma-half-to-vartheta} corresponding to $\theta_{0} = \sqrt{5/9}$ to conclude that 
\[
I_{12} \leq  C  \left(\vartheta^{ps-p}\int_{\mathbb{R}^{d}_{M^+}} \int_{B_{\vartheta x_{d}} (\bdx)} \frac{|u(\bdy) - u(\bdx)|^{p} } {|\vartheta x_{d}|^{d + ps}}d\bdy d\bdx 
+M^{-ps} \|u\|^{p}_{L^{p}(\mathbb{R}^{d}_{M^+})}   \right)
\]
for any $\vartheta\in (0, 1].$
  That completes the proof. 
\end{proof}

\section{Results on Lipschitz domains}
\label{trace-bounded}
\subsection{Trace theorem for Lipschitz domains}
The aim of this subsection is the proof of Theorem \ref{coro:bdddomain-intro}. The proof follows classical arguments. We use the trace theorems on half space and stripes proved in the previous section to prove it for  Lipschitz hypographs and then for bounded domains with Lipschitz boundary.  
To be precise, following \cite{McLean},  we say that $\Omega\subset \mathbb{R}^{d}$ is a {\em Lipschitz hypograph} if there exists  a Lipschitz function $\zeta : \mathbb{R}^{d-1} \to \mathbb{R}$ and $L>0$ such that  $ \|\nabla \zeta\|_{L^{\infty}} \leq L$, 
\[
\Omega = \{\bdx = (\bdx', x_{d})\in \mathbb{R}^{d}:  x_{d} > \zeta(\bdx') \}, \,\,\text{and}\,\, \partial \Om = \{ \bdx = (\bdx', x_{d})\in \mathbb{R}^{d}: x_{d} = \zeta(\bdx') \}. 
\] 
We say that an open subset $\Om$ is  of $\R^d$ is a Lipschitz domain if its boundary $\partial \Omega$ is compact and there exists two families of sets $\{ O_{j}\}$ and $\{\Omega_{j}\}$ with properties: 
\begin{enumerate}
\item for each $j$, $O_j$ is bounded and the family $\{O_{j}\}$ is an open cover of $\partial \Omega$, 
\item each $\Omega_{j}$ can be transformed to a Lipschitz hypograph by a a change of coordinates (rigid motion), and 
\item the set $\Omega$ satisfies the property that $\Om\cap O_{j} = \Omega_{j}\cap O_{j}$ for all $j$.   
\end{enumerate}

\begin{proof}[Proof of Theorem \ref{coro:bdddomain-intro}]
We use standard procedures to prove the theorem. First, since $C^{0,1}_c(\overline{\Omega})$ is dense in $\Wf(\Omega)$ 
 it suffices to demonstrate that the linear operator $  {T}$  
 satisfies \eqref{trace-estimate} over $C^{0,1}_c(\overline{\Omega})$. Second, noticing that we have already proved the inequality \eqref{trace-estimate} when $\Omega = \mathbb{R}^{d}_{+}$, we begin by showing the validity of the theorem for Lipschitz hypographs, and finally using partition of unity and flattening argument we prove it for general Lipschitz domains. 
 
Step 1. We prove the statement when $\Omega $ is a Lipschitz hypograph.  { Let $\zeta$ be the associated Lipschitz function as above. Then the surface measure $d\sigma$ on $\partial \Omega$ is given by, \cite{McLean}, 
\[
d\sigma(\bx) = \sqrt{1 + |\nabla \zeta (\bx')|^2} d\bx'
\]
By definition $ u\in W^{s-{1\over p},p}(\partial \Omega)$ if and only if $u\in L^{p}(\partial \Omega, d\sigma)$ and 
\[
|u|^p_{W^{s-{1\over p},p}(\partial \Omega)}=\int_{\partial \Omega}\int_{\partial \Omega} \frac{|u(\by) - u(\bx)|^p}{|\bx-\by|^{d + ps-2}}d\sigma(\bx) d\sigma(\by) < \infty. 
\]
Now making the change of variables $\bx\mapsto \hat{\bx}=(\bx', \zeta(\bx'))$ and $\by\mapsto \hat{\by}=(\by',  \zeta(\by'))$ and using 
$u_{\zeta}(\bx'):= u(\bdx', \zeta(\bdx'))$, we have that  
{\small \[
 |u|^p_{W^{s-{1\over p},p}(\partial \Omega)}= \iint_{[\mathbb{R}^{d-1}]^2} \frac{|u_{\zeta}(\by')-u_{\zeta}(\bx')|^p}{(|\bx'-\by'|^2 + |\zeta(\by')-\zeta(\bx')|^{2})^{d+ps-2\over 2}}\sqrt{1 + |\nabla \zeta (\bx')|^2} \sqrt{1 + |\nabla \zeta (\by')|^2} d\by'd\bx'. 
\]}
Notice that for any $\bx',\by'\in \mathbb{R}^{d-1}$
\[
|\bx'-\by'|\leq \sqrt{|\bx'-\by'|^2 + |\zeta(\by')-\zeta(\bx')|^{2}} \leq \sqrt{1+ L^2}|\by'-\bx'| 
\]
and $1\leq \sqrt{1 + |\nabla \zeta(\bx')|^2} \leq \sqrt{1 + L^2}$. As a consequence, there is a constant $C=C(d, L)$ such that  \[{1\over C}|u|_{W^{s-{1\over p},p}(\partial \Omega)} \leq  |u_{\zeta}|_{W^{s-{1\over p}, p}(\mathbb{R}^{d-1})} \leq C |u|_{W^{s-{1\over p},p}(\partial \Omega)}\] 
We conclude that $u\in W^{s-{1\over p},p}(\partial \Omega)$ if and only if $u_\zeta \in W^{s-{1\over p},p}(\mathbb{R}^{d-1})$ and we may take the  the norm 
$\|u\|_{W^{s-{1\over p}, p}(\partial \Omega)} = \|u_{\zeta}\|_{W^{s-{1\over p},p}(\mathbb{R}^{d-1})}$. }

Now given $u\in  C^{0,1}_c(\overline{\Omega})$,  let us introduce the function $G_{\zeta}u (\bdx', x_{d}) := u(\bdx', x_{d}+ \zeta(\bdx')) $ for all $(\bdx',x_{d}) \in \mathbb{R}^{d}_{+}$. Then notice that $G_{\zeta}u$ has a compact support. Moreover, as a composition of Lipschitz functions,  $G_{\zeta}u\in C^{0,1}_{c}(\mathbb{R}^{d}_{+})$ and that 
\[
\|G_{\zeta}u\|_{L^{p}(\mathbb{R}^{d}_{+})}^{p}= \int_{\mathbb{R}^{d-1} }\int_{0}^{\infty}|u(\bdx', x_{d}+ \zeta(\bdx'))|^{p}d\bdx'dx_{d} = \|u\|^{p}_{L^{p}(\Omega)}
\]
We next show that there exists a $\sigma \in (0,1]$ and a positive constant $C$, depending only on the Lipschitz constant $L$, $d, p, $ $s$ such that $|G_{\zeta}u |_{\Wf_{\sigma}(\mathbb{R}^{d}_{+})} \leq C |u|_{\Wf(\Omega)}$. That is, 
\[
\int_{\mathbb{R}^{d}_{+}} \int_{B_{\sigma  x_{d}}(\bx)} \frac{|G_{\zeta}u(\bdy) - G_{\zeta}u(\bdx)|^{p}} {|\sigma x_{d}|^{{\mu} }} d\bdy d\bdx  \leq C \int_{\Omega} \int_{B_{\delta(\bdx)}(\bdx) } \frac{|u(\bdy) - u(\bdx)|^{p}} {| \delta(\bdx)|^{{\mu} }} d\bdy d\bdx, 
\]
where $\delta(\bdx) = \dist(\bdx, \partial \Omega).$
To that end, for a $\sigma$ to be determined, we estimate that 
\[
\begin{split}
\int_{\mathbb{R}^{d}_{+}} \int_{B_{\sigma  x_{d}} (\bdx) } &\frac{|G_{\zeta}u(\bdy) - G_{\zeta}u(\bdx)|^{p}} {|\sigma \cdot x_{d}|^{{\mu} }} d\bdy d\bdx\\
&=\int_{\mathbb{R}^{d}_{+} }\int_{B_{\sigma\cdot x_{d}}((\bdx', x_{d}))} \frac{ |u(\bdy', y_{d}+ \zeta(\bdy')) - u(\bdx', x_{d}+ \zeta(\bdx'))|^{p}}{ |\sigma\cdot x_{d}|^{{\mu}} } d\bdy'd y_{d} d\bdx \\
& =  \int_{\Omega }\int_{H_{\sigma}({\bf w})}\frac{ |u({\bf z}', z_{d}) - u({\bf w}', w_{d})|^{p}}{ |\sigma\cdot(w_{d}-\zeta({\bf w}'))|^{{\mu}} } d{\bf z}'d z_{d} d{\bf w} 
\end{split}
\]
where 
\[
H_{\sigma}({\bf w}) = \{({\bf z}', z_{d})\in \Omega: |{\bf z}'-{\bf w}'|^{2} + |z_{d}-w_{d} - ( \zeta({\bf w}' )-\zeta({\bf z}') )   |^{2} < (\sigma(w_{d} - \zeta ({\bf w}')) )^{2}  \}. 
\]
Note  that using the fact that $({\bf w}', \zeta({\bf w}')) \in \partial \Omega$, we have 
\begin{equation}\label{dist-from-boundary}
 {\dist}({\bf w},\partial \Omega)\leq | w_{d}-\zeta({\bf w}')| \leq K_{1}  {\dist}({\bf w},\partial \Omega)
\end{equation}
for some constant $K_{1}$ that depends only on the Lipschitz character of $\Omega$.  Indeed, the quantity $x_d -\zeta(\bdx')$ is represented in Figure \ref{fig:Lipgraph} below as the length of the read dashed line. 
In Figure  \ref{fig:Lipgraph}, $\bdw=(\bdw',w_d)$ is a point in $\Om$ and $(\bdw', \zeta(\bdw'))$ is a point on $\partial \Om$.  The Lipschitz graph $\zeta$ 
always remains outside the double cone centered at $(\bdw', \zeta(\bdw'))$. It is obvious from Figure \ref{fig:Lipgraph} that 
dist$(\bdw, \partial\Om)$ is greater than or equal to the length of black dashed line, which represents the distance of $\bdw$ to the edge of the cone. 
Therefore, $ {\dist}(\bdw, \partial\Om)\geq c (w_d -\zeta(\bdw'))$ where $c>0$ is a constant that only depends on $L$. \begin{figure}[htbp] 
\centering
  \begin{tikzpicture}[scale=1.2]  
    \tikzset{to/.style={->,>=stealth',line width=1pt}}; 
        \draw[to] (0,0)--(5,0) node[xshift=.3cm] {$\bdw'$};;
           \draw[to] (0,0)--(0,4) node[yshift=.3cm] {$w_d$};
     \draw[line width=1pt] (-1,1)--(1,2)--(2,1.5)--(4,2.2)--(5,1.5);       
     \node at (3.2, 1.6) {$\zeta(\bdw')$};
         \node at (5.3, 1.6) {$\partial\Om$};
       \draw (2.5,3) circle(2pt) [fill] node[above] {$(\bdw',w_d)$};
    \draw[red, dashed, line width=2pt]   (2.5,3)--(2.5, 1.6);
      \draw[dashed, line width=2pt]   (2.5,3)--(3.1, 2.3);
    \draw[blue, line width=1pt] (1, 0.4)--(2.5, 1.7)--(4, 3);
    \node at (4.8, 3) {\small slope $= L$};
      \node at (4.8, 0.4) {\small slope $= -L$};
        \draw[blue, line width=1pt] (1, 3)--(2.5, 1.7)--(4, 0.4);
      \end{tikzpicture}
 \caption{\small The Lipschitz graph $\zeta$ remains outside the double cone centered at $(\bdw', \zeta(\bdw'))$. The double cone is generated by the blue lines with axis parallel to the $w_d$-axis. The red dashed line represents $w_d - \zeta(\bdw')$ and the black dashed line represents the distance of $(\bdw', w_d)$ to the edge of the cone.} 
 \label{fig:Lipgraph}     
\end{figure}
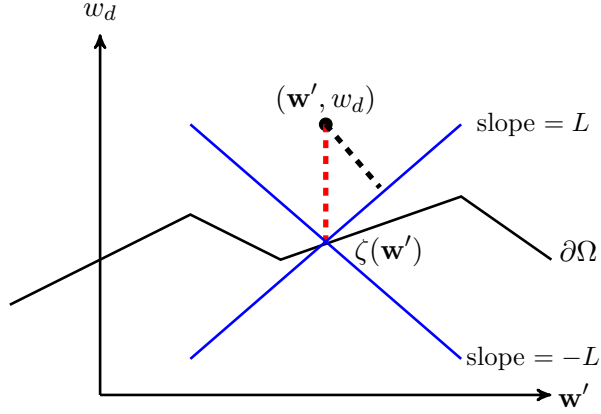

\noindent Moreover, for any ${\bf z} \in H_{\sigma}({\bf w})$, we have 
\[
\begin{split}
|{\bf z} - {\bf w}|^{2} &\leq 2(| z_{d}-w_{d} - ( \zeta({\bf w}' )-\zeta({\bf z}') )|^{2} + | \zeta({\bf w}' )-\zeta({\bf z}')|^{2}) + |{\bf w}' - {\bf z}'|^{2}\\
&\leq (\sigma K _{2} \dist({\bf w}, \partial \Omega))^{2},
\end{split}
\]
where $K_{2}$ is another universal constant depending only on the Lipschitz constant.  We choose $\sigma\in (0, 1)$ small so that $\sigma K _{2} < 1$. With this choice of $\sigma$ we have  $ H_{\sigma}({\bf w})\subset B_{\delta(\bdw)} ({\bf w})$. 
It then follows that there exists a constant $C$ such that 
\[
\int_{\mathbb{R}^{d}_{+}} \int_{B_{\sigma \cdot x_{d}}(\bdx)} \frac{|G_{\zeta}u(\bdy) - G_{\zeta}u(\bdx)|^{p}} {|\sigma \cdot x_{d}|^{{\mu} }} d\bdy d\bdx \leq C  \int_{\Omega }\int_{B_{\delta(\bdw)} ({\bf w})}\frac{ |u(\bdz) - u(\bdw)|^{p}}{ |\delta(\bdw)|^{\mu} } d\bdz d{\bf w}. 
\]
Applying  {Remark \ref{rem:trace_half_space} with $\vartheta=\sigma$,} there exists a constant $C = C(d, p, s)$ such that 
\[
\begin{split}
\| {T}G_{\zeta}u\|^{p}_{W^{s-\frac{1}{p}} (\mathbb{R}^{d-1})} &\leq C \left( \|G_{\zeta}u \|_{L^{p}(\mathbb{R}^{d}_{+})}^{p} + \sigma ^{ps-p} \int_{\mathbb{R}^{d}_{+}} 
\int_{B_{\sigma \cdot x_{d}(\bdx)} }\frac{|G_{\zeta}u(\bdy) - G_{\zeta}u(\bdx)|^{p}} {|\sigma \cdot x_{d}|^{{\mu} }} 
d\bdy d\bdx\right ) \\
&\leq C \left( \|u\|_{L^{p}(\Omega)}^{p} +  \int_{\Omega }\int_{B_{\delta(\bdw)} ({\bf w})}\frac{ |u(\bdz) - u(\bdw)|^{p}}{ |\delta(\bdw)|^{\mu} } d\bdz d{\bf w} \right) = C \|u\|_{\Wf(\Omega)}^{p}. 
\end{split}
\]
We finally observe that $ {T}G_{\zeta}u(\cdot) = G_{\zeta} u(\cdot, 0) = u_{\zeta} (\cdot)$, therefore, 
\[
\| {T}u\|^{p}_{W^{s-\frac{1}{p}} (\partial \Omega)}  = \|u_{\zeta}\|^{p}_{W^{s-\frac{1}{p}}(\mathbb{R}^{d-1}) } = \| {T}G_{\zeta}u\|^{p}_{W^{s-\frac{1}{p}} (\mathbb{R}^{d-1})}  \leq C \|u\|_{\Wf(\Omega)}^{p}
\]

Step 2. Assume now that $\Omega$ is a Lipschitz domain. Let the family of sets $\{\Omega_{i}\}_{i=1}^{N}$ and $\{O_{i}\}_{i=1}^{N}$ are as given at the beginning of the section. We introduce a partition of unity $\{ \phi_{i}\}$ subordinate to $\{O_{i}\}_{i=1}^{N}$.  By Lemma \ref{multibysmoothfn} and the property that $\Omega\cap O_{i} = \Omega_{i}\cap O_{i}$, the functions $u_{i} = \phi_{i} u$, after extending by $0$, are in $C^{0,1}_c(\overline{\Omega_{i}})$ for all $i=1, 2, \dots, N$ and that there exists a constant  $C$ such that 
\begin{equation}\label{truncated-u-control}
\|u_{i}\|_{\Wf(\Omega_{i})} \leq C \|u\|_{\Wf(\Omega)}, \text{ for all $ i=1, 2, \dots, N$}. 
\end{equation}
 It is also clear that 
\[
 {T}u (\xi) = u(\xi) = \sum_{i=1}^{N} u_{i}(\xi) = \sum_{i=1}^{N}  {T}^iu_{i}(\xi), \quad \text{for all $\xi \in \partial\Omega$}. 
\] 
Here we are the using the notation $ {T}^i u_i (\xi) =  {T}u_i(\xi) \chi_{\partial\Omega_i\cap \partial \Omega} (\xi)$,  the restriction of the trace map $ {T}$ on $\partial\Omega_i\cap \partial \Omega$.  Notice that for each $i$
$
\text{supp}( {T}^i u_i)\subset \partial\Omega\cap \text{supp}(\phi_i)
$ 
and as a consequence for each $i$
\begin{equation}\label{dist-supp}
 {\dist}(\text{supp}( {T}^i u_i), \partial \Omega\setminus \partial \Omega_i) > 0. 
\end{equation}
Now we  may write 
\begin{equation}\label{trace-decomposition}
\| {T}u\|_{W^{s-\frac{1}{p}} (\partial\Omega)} \leq \sum_{i=1}^{N} \| {T}^i u_{i}\|_{W^{s-\frac{1}{p},p} (\partial \Omega)}. 
\end{equation}
  For each $i$, 
  \[
  \begin{split}
  \| {T}^iu_{i}\|^{p}_{W^{s-\frac{1}{p},p} (\partial \Omega)} &= \int_{\partial \Omega}| {T}^iu_{i} (\xi)|^{p} d\sigma(\xi) + \int_{\partial \Omega}\int_{\partial \Omega} \frac{| {T}^iu_{i}(\xi) -  {T}^iu_{i}(w)|^{p} }{|\xi -w|^{d + ps -2}}  {d\sigma(\xi) d\sigma(w)}\\
  &\leq C \left(\int_{\partial \Omega_{i}}| {T}u_{i} (\xi)|^{p} d\sigma(\xi) + \int_{\partial \Omega_{i}}\int_{\partial \Omega_{i}} \frac{| {T}u_{i}(\xi) -  {T}u_{i}(w)|^{p} }{|\xi -w|^{d + ps -2}}  {d\sigma(\xi) d\sigma(w)}\right)\\
  &=C \| {T} u_{i}\|_{W^{s-\frac{1}{p},p}(\partial \Omega_{i})}^{p} 
  \end{split}
  \]
  where we used the observation in \eqref{dist-supp} about the support of  $ {T}^i u_{i}$ and its a positive distance away from the boundary $ \partial \Omega\setminus \partial \Omega_i$.   
  Since we know that $\partial \Omega_{i}$ is a hypograph, up to a rigid transformation, we  can apply Step 1, to write the estimate that there exists a constant $C$ such that for all $i = 1, 2, \dots, N$ 
  \begin{equation}\label{Step1-for-Omi}
   \| {T} u_{i}\|_{W^{s-\frac{1}{p},p}(\partial \Omega_{i})} \leq C \|u_{i}\|_{\Wf(\Omega_{i})}. 
  \end{equation}
  We finally get the inequality after we put together and \eqref{truncated-u-control}, \eqref{trace-decomposition} and \eqref{Step1-for-Omi}. That completes the proof. 
  \end{proof}
  \subsection{Hardy-type inequality for {Lipschitz} domains}
 We now show  {the proof of Theorem \ref{NLHardy-Lips}}. 
 The theorem is an immediate consequence of two results that we prove below. The first result establishes equation \eqref{prop:ghardy-semi} with a right hand side that is a norm instead of the seminorm.  {This result is then used to show a nonlocal Poincar\'e-type inequality (Proposition \ref{Poincare}). Propositions \ref{hardy-for-domain} and \ref{Poincare} together imply Theorem \ref{NLHardy-Lips}.}
 
\begin{prop}\label{hardy-for-domain}
Let $1\leq p < \infty$, $s\in (0, 1]$ and $ps > 1$. Assume that $\Omega$ is a bounded Lipschitz domain in $\mathbb{R}^{d}(d\geq 2)$.
Then there exists a constant $C>0$ such that for any $u\in \mathfrak{\mathring{W}}^{p,s}(\Omega)$
\beq \label{prop:ghardy}
 \int_\Om \frac{|u(\bm x)|^p}{( {\dist}(\bm x, \partial \Om))^{ps}} d\bm x \leq C  \|u\|^p_{\Wf (\Omega)}\,.
 \eeq
\end{prop}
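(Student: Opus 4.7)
The approach is to mimic the partition-of-unity reduction from Step 2 of the proof of Theorem \ref{coro:bdddomain-intro} and then invoke the half-space Hardy inequality of Theorem \ref{hardy-for-smooth} on each flattened boundary chart. By the definition of $\mathfrak{\mathring{W}}^{s,p}(\Om)$ as the closure of $C^{0,1}_c(\Om)$ together with Fatou's lemma, it suffices to establish \eqref{prop:ghardy} for $u\in C^{0,1}_c(\Om)$. Fix a finite open cover $\{O_0,O_1,\dots,O_N\}$ of $\overline{\Om}$ with $O_0\Subset\Om$, and for each $i\ge 1$ chosen so that, up to a rigid motion, $\Om\cap O_i$ is a piece of a Lipschitz hypograph $\Om_i=\{x_d>\zeta_i(\bx')\}$. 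Let $\{\phi_i\}$ be a subordinate partition of unity and set $v_i=\phi_i u$.

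The interior term is handled directly: since $\mathrm{supp}\,v_0\subset O_0\Subset\Om$, one has $\dist(\bx,\partial\Om)\ge c_0>0$ on $\mathrm{supp}\,v_0$, so
\[
\int_\Om \frac{|v_0(\bx)|^p}{\dist(\bx,\partial\Om)^{ps}}\,d\bx \;\le\; c_0^{-ps}\,\|u\|_{L^p(\Om)}^p.
\]
For each boundary piece $i\ge 1$, define $\tilde v_i(\bx',x_d):=v_i(\bx',x_d+\zeta_i(\bx'))$ on $\mathbb{R}^d_+$. Since $u\in C^{0,1}_c(\Om)$ vanishes in a neighbourhood of $\partial\Om$, so does $v_i$, and hence $\tilde v_i\in C^{0,1}_c(\overline{\mathbb{R}^d_+})$ vanishes in a neighbourhood of $\{x_d=0\}$; in particular $\tilde v_i|_{x_d=0}=0$. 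The geometric estimate \eqref{dist-from-boundary} from the trace-theorem proof yields a constant $K_1$ depending only on the Lipschitz character of $\Om$ such that $w_d-\zeta_i(\bw')\le K_1\dist(\bw,\partial\Om)$ whenever $\bw=(\bw',w_d)\in\Om_i$, and the change of variables $\bw=(\bx',x_d+\zeta_i(\bx'))$ therefore gives
\[
\int_\Om \frac{|v_i(\by)|^p}{\dist(\by,\partial\Om)^{ps}}\,d\by \;\le\; K_1^{ps}\int_{\mathbb{R}^d_+}\frac{|\tilde v_i(\bx)|^p}{x_d^{ps}}\,d\bx.
\]

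Now pick $\sigma\in(0,1)$ as in Step 1 of the proof of Theorem \ref{coro:bdddomain-intro}, small enough that the inverse flattening carries $B_{\sigma x_d}(\tilde\bx)$ into $B_{\dist(\bw,\partial\Om)}(\bw)\cap\Om$. Applying Theorem \ref{hardy-for-smooth} to $\tilde v_i$ with $\vartheta=\sigma$, combined with the seminorm-transport bound already established in that same step, we obtain
\[
\int_{\mathbb{R}^d_+}\frac{|\tilde v_i|^p}{x_d^{ps}}\,d\bx \;\le\; C\,|\tilde v_i|^p_{\mathfrak{W}^{s,p}_\sigma(\mathbb{R}^d_+)} \;\le\; C\,|v_i|^p_{\Wf(\Om)} \;\le\; C\,\|u\|^p_{\Wf(\Om)},
\]
where the last inequality uses Lemma \ref{multibysmoothfn} applied to $v_i=\phi_i u$. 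Summing over $i=0,\dots,N$ yields \eqref{prop:ghardy}. The main obstacle is the bookkeeping required to chain together the three ingredients---the distance comparison \eqref{dist-from-boundary}, the parametrised-space seminorm transport under the flattening (which forces the use of $\mathfrak{W}^{s,p}_\sigma$ rather than $\Wf$ on the half-space side, since the graph map shrinks the interaction radius by a factor depending on the Lipschitz constant), and the parametrised half-space Hardy inequality---but since each ingredient is already in place, the argument is a careful assembly rather than new analytic work.
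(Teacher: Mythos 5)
Your proposal is correct and follows essentially the same route as the paper: localize with a partition of unity, treat the interior piece trivially, flatten each boundary piece via $G_{\zeta_i}$, invoke the parametrized Hardy inequality (Theorem \ref{hardy-for-smooth}) together with the seminorm-transport bound and the distance comparison \eqref{dist-from-boundary}, and control $\|v_i\|$ by $\|u\|$ via Lemma \ref{multibysmoothfn}. The only point to state carefully is that \eqref{dist-from-boundary} compares $w_d-\zeta_i(\bdw')$ with $\dist(\bdw,\partial\Omega_i)$ rather than $\dist(\bdw,\partial\Omega)$ (and likewise the transported seminorm is $|v_i|_{\Wf(\Omega_i)}$), so as in the paper one should shrink the sets $O_i$ so that the two distances coincide on $O_i\cap\Omega$ before summing.
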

\begin{proof}
 {Without loss of generality we assume that $u\in C_{c}^{0, 1}(\Omega)$.} 
Following the proof of Theorem \ref{coro:bdddomain-intro}, we show \eqref{prop:ghardy} in two steps. 

Step 1. We prove the statement when $\Omega $ is a Lipschitz hypograph, i.e, 
$\Omega= \{ (\bdx', x_{d}) : x_{d} > \zeta(\bdx')\} $
for a Lipschitz function $\zeta:\R^{d-1}\to\R$ with Lipschitz constant $L$. 
 {
Introduce the function $G_{\zeta}u (\bdx', x_{d}) := u(\bdx', x_{d}+ \zeta(\bdx')) $ for all $(\bdx',x_{d}) \in \mathbb{R}^{d}_{+}$ as before. Then $G_{\zeta}u \in C_c^{0, 1}(\mathbb{R}^{d}_{+})$
and applying Remark \ref{rem:hardy-for-smooth-special}, for any $\sigma\in (0, 1]$,  we have 
\[
\int_{\mathbb{R}^{d}_{+}}\frac{|G_{\zeta} u(\bdx)|^{p}}{|x_{d}|^{ps}} d\bdx 
\leq C |G_{\zeta}u |^p_{\Wf_{\sigma}(\mathbb{R}^{d}_{+})} \,,
\]
where $C$ is a constant depending only on $d$, $p$, $s$ and $\sigma$.  
From Step 1 in the proof of Theorem \ref{coro:bdddomain-intro} that  $|G_{\zeta}u |_{\Wf_{\sigma}(\mathbb{R}^{d}_{+})} \leq C |u|_{\Wf(\Omega)}$ for some $\sigma \in (0,1]$, and so we have  
\beq \label{eq:ghardy_step1_1}
\int_{\mathbb{R}^{d}_{+}}\frac{|G_{\zeta} u(\bdx)|^{p}}{|x_{d}|^{ps}} d\bdx \leq C |G_{\zeta}u |^p_{\Wf_{\sigma}(\mathbb{R}^{d}_{+})}\leq C |u|^p_{\Wf(\Omega)}\,. 
\eeq }
Now by change of variables we can see 
\[
\int_{\mathbb{R}^{d}_{+}}\frac{|G_{\zeta} u(\bdx)|^{p}}{|x_{d}|^{ps}} d\bdx = \int_{\Om}\frac{ |u(\bdx)|^{p}}{|x_d -\zeta(\bdx')|^{ps}} d\bdx \,.
\]
Using the inequality \eqref{dist-from-boundary}, see also  Figure \ref{fig:Lipgraph}, we then have 
\beq \label{eq:ghardy_step1_2}
\int_{\mathbb{R}^{d}_{+}}\frac{|G_{\zeta} u(\bdx)|^{p}}{|x_{d}|^{ps}} d\bdx = \int_{\Om}\frac{ |u(\bdx)|^{p}}{|x_d -\zeta(\bdx')|^{ps}} d\bdx \geq c^{ps} \int_{\Om}\frac{ |u(\bdx)|^{p}}{| {\dist}(\bdx, \partial\Om)|^{ps}} d\bdx \,.
\eeq 
By combining \eqref{eq:ghardy_step1_1} and \eqref{eq:ghardy_step1_2}, we have shown \eqref{prop:ghardy} for $\Om$ being a Lipschitz hypograph. 

Step 2. Assume now that $\Omega$ is a Lipschitz domain. Let the family of sets $\{\Omega_{i}\}_{i=1}^{N}$ and $\{O_{i}\}_{i=1}^{N}$ are as given at the beginning of the section.  
Notice that  $\Om\cap O_i =\Om_i \cap O_i$ and each $\Omega_{i}\, (i\in \{ 1,\cdots, N\})$ can be transformed to a Lipschitz hypograph by rigid motion. In addition, we define an open set $O_0\subset \Om$ such that $\{ O_i\}_{i=0}^N$ is an open cover of $\overline{\Om}$. 
Without loss of generality, we assume that $ {\dist}(O_0, \partial\Om)= c^\ast >0$. 
Similarly, we introduce a partition of unity $\{ \phi_i\}_{i=0}^N$ for $\overline{\Om}$ subordinate to $\{ O_i\}_{i=0}^N$. 
Therefore, for any $\bdx\in \overline{\Om}$, $\textstyle u(\bdx) = \sum_{i=0}^N (\phi_i u)(\bdx)$.
We also define $u_i = \phi_i u$ and assume a zero extension of $u_i$ outside $O_i$ when it is necessary. 
Then 
\beq \label{eq:ghardy_step2_1}
\int_{\Om}\frac{ |u(\bdx)|^{p}}{| {\dist}(\bdx, \partial\Om)|^{ps}}d\bdx \leq (N+1)^{p-1} \sum_{i=0}^N \int_{\Om\cap O_i} \frac{ |u_i(\bdx)|^{p}}{| {\dist}(\bdx, \partial\Om)|^{ps}} d\bdx\,.
\eeq 
Notice that on $O_0$ we have 
\beq \label{eq:ghardy_step2_2}
\int_{O_0} \frac{ |u_0(\bdx)|^{p}}{| {\dist}(\bdx, \partial\Om)|^{ps}} d\bdx\leq (c^\ast)^{-ps} \int_{O_0} |u_0(\bdx)|^{p} \leq C \| u \|^p_{L^p(\Om)}\,. 
\eeq
For any $\bdx\in O_i\, (i\in \{1,\cdots, N\})$, we could assume without loss of generality that $ {\dist}(\bdx, \partial\Om)=  {\dist}(\bdx, \partial\Om_i)$
(which is true by choosing small enough sets $\{ O_i\}_{i=1}^N$). Then for $i\in \{1,\cdots, N\}$,
\beq \label{eq:ghardy_step2_3}
\int_{\Om\cap O_i} \frac{ |u_i(\bdx)|^{p}}{| {\dist}(\bdx, \partial\Om)|^{ps}} d\bdx \leq \int_{\Om_i} \frac{ |u_i(\bdx)|^{p}}{| {\dist}(\bdx, \partial\Om_i)|^{ps}} d\bdx \leq |u_i |^p_{\Wf (\Omega_i)}\,,
\eeq
where we have used Step 1 on $\Om_i$, since it is a Lipschitz hypograph up to a rigid transformation. 
 By combining equations \eqref{truncated-u-control}, \eqref{eq:ghardy_step2_1}, \eqref{eq:ghardy_step2_2}, and \eqref{eq:ghardy_step2_3}, we
 have shown \eqref{prop:ghardy} for a general Lipschitz domain $\Om$.  
\end{proof}

\subsection{A Poincar\'e-type inequality for {Lipschitz} domains}
\begin{prop}[Poincar\'e-type inequality]\label{Poincare}
Let $1<p < \infty$, $s\in (0, 1]$ and $ps > 1$. Assume that $\Omega$ is a bounded Lipschitz domain in $\mathbb{R}^{d}(d\geq 2)$.
There exists a constant $C_P>0$ such that 
\[
\|u\|_{L^{p}(\Omega)} \leq C_P |u|_{\mathfrak{W}^{s,p}(\Omega)} 
\]
for all $u\in \mathfrak{{\mathring W}}^{s, p}(\Omega)$
\end{prop}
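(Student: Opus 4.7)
The plan is to argue by contradiction, exploiting the norm-based Hardy inequality in Proposition \ref{hardy-for-domain} together with a compactness argument. Suppose the inequality fails; then there is a sequence $\{u_n\}\subset\mathfrak{\mathring{W}}^{s,p}(\Omega)$ with $\|u_n\|_{L^p(\Omega)}=1$ and $|u_n|_{\mathfrak{W}^{s,p}(\Omega)}\to 0$, in particular bounded in $\mathfrak{W}^{s,p}(\Omega)$. First I would apply Proposition \ref{hardy-for-domain} to obtain the uniform bound $\int_\Omega |u_n(\bx)|^p\,\delta(\bx)^{-ps}\,d\bx \leq C_0$, where $\delta(\bx)=\dist(\bx,\partial\Omega)$; for the boundary strip $\Omega_\epsilon := \{\bx\in\Omega:\delta(\bx)<\epsilon\}$ this yields the tightness estimate $\|u_n\|_{L^p(\Omega_\epsilon)}^p \leq C_0\,\epsilon^{ps}$, uniformly in $n$.

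Next, I would establish strong $L^p$ compactness of $\{u_n\}$ by mollification in the interior. Fix $\epsilon>0$ and $\eta<\epsilon$, and let $\rho_\eta$ be a standard mollifier supported in $B_\eta$. For $\bx\in K_\epsilon := \Omega\setminus\Omega_\epsilon$ and $|\mathbf{h}|<\eta\leq\delta(\bx)$, one has $\bx-\mathbf{h}\in B_{\delta(\bx)}(\bx)\subset\Omega$ and $\delta(\bx)^{-(d+ps)}\geq\operatorname{diam}(\Omega)^{-(d+ps)}$; Jensen's inequality combined with Fubini then give
\[
\|u_n*\rho_\eta - u_n\|_{L^p(K_\epsilon)}^p \;\leq\; C(\epsilon,\eta)\,|u_n|_{\mathfrak{W}^{s,p}(\Omega)}^p \;\longrightarrow\; 0.
\]
Because $u_n*\rho_\eta$ is smooth and bounded in $W^{1,p}(K_\epsilon)$ for each fixed $\eta$, Rellich--Kondrachov extracts a subsequence convergent in $L^p(K_\epsilon)$, and combined with the display this gives $u_n\to v_\epsilon$ strongly in $L^p(K_\epsilon)$ along a subsequence. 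A diagonal procedure in $\epsilon$, together with the boundary tightness estimate and Fatou, produces a subsequence (still denoted $\{u_n\}$) with $u_n\to v$ strongly in $L^p(\Omega)$ and pointwise a.e.; in particular $\|v\|_{L^p(\Omega)}=1$.

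To finish, Fatou's lemma applied to the double integral defining the seminorm forces $|v|_{\mathfrak{W}^{s,p}(\Omega)}\leq\liminf|u_n|_{\mathfrak{W}^{s,p}(\Omega)}=0$, so $v$ is constant (as observed in Section \ref{Prelim}). Then $\|u_n-v\|_{\mathfrak{W}^{s,p}(\Omega)}\to 0$ together with the closedness of $\mathfrak{\mathring{W}}^{s,p}(\Omega)$ yields $v\in\mathfrak{\mathring{W}}^{s,p}(\Omega)$; applying Proposition \ref{hardy-for-domain} to $v\equiv c$ gives $c^p\int_\Omega\delta(\bx)^{-ps}\,d\bx<\infty$. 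Since $ps\geq 1$ and $\partial\Omega$ is Lipschitz, this integral diverges, forcing $c=0$, which contradicts $\|v\|_{L^p(\Omega)}=1$.

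The hard part is the compactness step: $|u_n|_{\mathfrak{W}^{s,p}}\to 0$ provides essentially no interior regularity, so $\mathfrak{W}^{s,p}(\Omega)$ does not embed compactly in $L^p(\Omega)$ in any standard way. The rescue is that on $K_\epsilon$ the localization radius $\delta(\bx)$ is bounded above and below by positive constants, and the seminorm therefore dominates honest $L^p$-translation differences; mollification quantifies this into a Rellich-based argument, and Hardy's inequality supplies the missing tightness near $\partial\Omega$.
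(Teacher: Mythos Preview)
Your argument is correct and complete, but it departs from the paper's proof in two notable respects.

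For the interior compactness step, the paper does not mollify. Instead, it truncates the symmetrized kernel $\gamma_p$ to $\Upsilon^m_p = \min\{m,\gamma_p\}$, writes the seminorm inequality
\[
|u_n|^p_{\mathfrak{W}^{s,p}(\Omega)} \;\geq\; \int_{\Omega_\epsilon} j(\bx)\,|u_n(\bx)|^p\,d\bx \;-\; \frac{p}{2}\int_{\Omega}\mathcal{K}u_n(\bx)\,|u_n(\bx)|^{p-2}u_n(\bx)\,d\bx
\]
via the convexity inequality $|b|^p \geq |a|^p + p|a|^{p-2}a(b-a)$, and then invokes compactness of the integral operator $\mathcal{K}u(\bx)=\int_\Omega \Upsilon^m_p(\bx,\by)u(\by)\,d\by$ on $L^p$ (finite double-norm kernel) to kill the second term along the weakly null sequence. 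Your mollification-plus-Rellich route is more elementary and transparent; the paper's device is more intrinsic to the nonlocal structure and avoids any appeal to Rellich--Kondrachov or to regularity of the interior sets $K_\epsilon$.

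For ruling out a nonzero constant limit, the paper first takes a \emph{weak} limit in the reflexive space $\mathfrak{\mathring{W}}^{s,p}(\Omega)$, uses weak lower semicontinuity of the seminorm to force the limit to be constant, and then dismisses nonzero constants by the one-line remark that the only constant in $\mathfrak{\mathring{W}}^{s,p}(\Omega)$ is $0$ (implicitly via the trace theorem). Your route---Hardy applied to the constant $c$, forcing $|c|^p\int_\Omega \delta(\bx)^{-ps}\,d\bx < \infty$, which diverges since $ps>1$---is a self-contained alternative that does not lean on the trace map. Both proofs use Proposition~\ref{hardy-for-domain} in exactly the same way for the boundary tightness estimate $\|u_n\|_{L^p(\Omega\setminus\Omega_\epsilon)}^p \leq C\epsilon^{ps}$.
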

\begin{proof}
Suppose the inequality is false. Then there exists a sequence $\{u_n\} \in \mathfrak{{\mathring W}}^{s,p}(\Omega)$ such that $\|u_n\|_{L^p}=1$ for all $n$ and $|u_n|_{\mathfrak{W}^{s, p}(\Omega)} \to 0$ as $n\to \infty.$ 
Since $\{u_n\}$ is a bounded sequence in $\mathfrak{\mathring{W}}^{s, p}(\Omega)$, we have a weakly convergent subsequence with a limit $u \in\mathfrak{\mathring{W}}^{s, p}(\Omega)$. Moreover, this $u$ is also an $L^p$ weak limit of $\{u_n\}$ since $L^{p}(\Omega)$ is a subset of the dual of $\mathfrak{W}^{s, p}(\Omega)$.    {The seminorm $|\cdot|_{\mathfrak{\mathring{W}}^{s, p}(\Omega)}$ {,} being convex and (strongly) continuous ,  is thus weakly lower semicontinuous}. Therefore we have 
\[
|u|_{\mathfrak{W}^{s,p }(\Omega)} \leq \liminf_{n\to \infty}|u_n|_{\mathfrak{W}^{s, p}(\Omega)}=0
\]
 {As a consequence,} $u$ is a constant  {on each connected component of $\Omega$}. In particular, $u=0$ since the only  {(piecewise)} constant in $\mathfrak{\mathring{W}}^{s, p}(\Omega)$ is 0. Our next goal is to show that $u_n\to 0$ strongly in $L^{p}(\Omega)$ (up to a subsequence). To show this let $\epsilon >0$ be fixed.  {Let $\Upsilon^{m}_p= \min\{m, \gamma_p\}$, where $m>0$ and $\gamma_p=\gamma_p^1$ is defined by \eqref{symmetrized-kernel} with $\vartheta=1$. } 
Then 
\[
j(\bx) = {1\over 2}\int_{\Omega}\Upsilon^{m}_p(\bx, \by)d\by
\] 
has a minimum $c>0$ on $\Omega_{\epsilon}=\{\bx\in\Omega:  {\dist}(\bx, \partial \Omega)>\epsilon \}$. 
It then follows from the symmetricity $\gamma_p$ and the algebraic inequality $|b|^p\geq |a|^{p} + p|a|^{p-2}a(b-a)$, which holds true for $p\geq 1,$ that 
\[
\begin{split}
&|u_n|_{\mathfrak{W}^{s, p}(\Omega)}^{p} = {1\over 2}\int_{\Omega}\int_{\Omega} \gamma_p(\bx, \by) |u_n(\bx)-u_{n}(\by)|^{p}d\by d\bx\\
&\geq {1\over 2}\int_{\Omega}\int_{\Omega} \Upsilon^{m}_p(\bx, \by) |u_n(\bx)-u_n(\by)|^{p}d\by d\bx\\
&\geq{1\over 2} \int_{\Omega}\left(\int_{\Omega} \Upsilon^{m}_p(\bx, \by) d\by\right) |u_n(\bx)|^pd\bx - {p\over 2}\int_{\Omega}\left(\int_{\Omega} \Upsilon^{m}_p(\bx, \by)u_n(\by)d\by\right) |u_n(\bx)|^{p-2}u_{n}(\bx)d\bx\\
&\geq  \int_{\Omega_\epsilon} j(\bx) |u_n(\bx)|^pd\bx - {p\over 2}\int_{\Omega}\mathcal{K}u_n(\bdx) |u_n(\bx)|^{p-2}u_{n}(\bx)d\bx
\end{split}
\]
where the linear operator  $\mathcal{K}$ on $L^{p}(\Omega)$ is given by $\mathcal{K}u(\bx)=\int_{\Omega} \Upsilon^m_p(\bx, \by)u(\by)d\by$. Now by restricting the first integral in the right hand side on $\Omega_\epsilon$ we have for each $n$ 
\[
|u_n|_{\mathfrak{W}^{s, p}(\Omega)}^{p}\geq c\|u_n\|^p_{L^{p}(\Omega_{\epsilon})} - {p\over 2}\int_{\Omega}\mathcal{K}u_n(\bdx)|u_n(\bx)|^{p-2}u_{n}(\bx)d\bx.
\] 
We note that the integral operator $\mathcal{K}$ on $L^{p}(\Omega)$ is generated by the kernel $\Upsilon^m_p(\bx, \by)$ with finite double-norm in $L^{p}(\Omega)$, i.e. the map $\bx\mapsto\int_{\Omega} |\Upsilon^m_p(\bx, \by)|^{p'}d\by \in L^{p}(\Omega)$ where $p'$ is the H\"older conjugate of $p$. Such types of operators are known to be compact operators on $L^{p}(\Omega)$  \cite{Grobler, Luxemburg-Zaanen}. As a consequence, since we have show that $u_n\rightharpoonup 0,$ weakly in $L^{p}(\Omega)$, then $\mathcal{K}u_{n}\to 0$ strongly in $L^{p}(\Omega).$ Moreover, since $\{ |u_n|^{p-2}u_{n}\}$ is a bounded sequence in $L^{p'}(\Omega)$, we have 
\[
\int_{\Omega}\mathcal{K}u_n(\bdx)|u_n(\bx)|^{p-2}u_{n}(\bx)d\bx \to 0, \,\,\text{as $n\to \infty.$}
\]
Therefore we have 
\[
\limsup_{n\to \infty}\| u_n\|_{L^{p}(\Omega_\epsilon)} \leq {1\over \sqrt[p]{c}}\limsup_{n\to \infty}| u_n|_{\mathfrak{W}^{s,p}(\Omega)}=0. 
\]
We next study the behavior of the sequence on near the boundary on $\Omega\setminus \Omega_\epsilon$. To that end, for any $x\in\Omega\setminus \Omega_\epsilon$, $ {\dist}(x, \partial \Omega) \leq \epsilon$ and so applying Proposition \ref{hardy-for-domain}
we have for a uniform positive constant $C>0$
\[
{1\over \epsilon^{ps}} \int_{\Omega\setminus \Omega_\epsilon}|u_n|^{p}d\bx  \leq \int_{\Omega}{|u_n(\bx)|^{p}\over  {\dist}(\bx, \partial \Omega)^{ps}} d\bx \leq C\| u_n\|^p_{\mathfrak{W}^{s, p}(\Omega)}.
\]
Combining the estimates on $\Om_\ep$ and $\Om\backslash\Om_\ep$,  we have \[ \limsup_{n\to\infty} \| u_n\|_{L^p(\Om)}\leq C \ep^s\]for any $\ep>0$.
Thus $\limsup_{n\to\infty}\|u_n\|_{L^{p}(\Omega)}=0$, which is the contradiction we were looking for. 
\end{proof}
%

We note that the Poincar\'e-type inequality showing here extends a result stated in \cite{TTD19} for the case of $p=2$ and  {$s=1$} and also fills in a gap in the proof presented there.

\section{application: a variational problem}\label{appl}
In this section and as we discussed in the introduction, we present  an application of the trace theorem that study the problem of minimizing an energy functional  defined 
over a convex and closed subset of  $
\mathfrak{{\mathring W}}^{s, p}(\Omega)$.  The main objective of this section is proving Theorem \ref{obstacle}.  The proof is based on the direct method of calculus of variation \cite[Theorem 1.2]{DCoV}.  {From  the fact that $\mathfrak{W}^{s, p}(\Omega)$ is a reflexive Banach space, as mentioned in Section \ref{Prelim}, we know that $\mathfrak{{\mathring W}}^{s, p}(\Omega) \subset \mathfrak{W}^{s, p}(\Omega)
$ is also a reflexive Banach space.} 
As a consequence, energy functionals on $\mathfrak{{\mathring W}}^{s, p}(\Omega)
$ will attain their minimum in a weakly closed subset provided they are coercive
and (sequentially) weakly lower semi-continuous on such a subset with respect to $
\mathfrak{{\mathring W}}^{s, p}(\Omega)
$.

\begin{proof}[Proof of Theorem \ref{obstacle} and Corollary \ref{obstaclep=2}]
We first observe that  {for $f\in [\mathfrak{W}^{s,p}(\Omega)]^\ast$}, if $u$ is a minimizer of $E(v) - \langle f, v \rangle$ in $K_{\phi}(p, h)$,  then $w=u-\phi$ is a minimizer of 
$
E(u+\phi) -\langle f, u+\phi \rangle
$ in   {$K_{0}(p, h-\phi)$} and vice versa.  So we will focus on the latter energy. 
Observe that since  {$K_{0}(p, h-\phi)$} is convex and closed in the strong topology, then it is a  weakly closed subset of $
\mathfrak{{\mathring W}}^{s, p}(\Omega)
$. Now for  {$f\in [\mathfrak{{W}}^{s, p}(\Omega)]^\ast$}, consider the functional 
\[
E_{f}(u) = E(u+\phi) - \langle f, u +\phi \rangle.
\]
where $E$ is as given by \eqref{Energy}. 
Let us show this functional is weakly lower semicontinuous. Since $u\mapsto \langle f,u+\phi\rangle$ is weakly continuous, it suffices to show that $E(\cdot + \phi)$ is weakly lower semicontinuous. Suppose that $u_n\to u$ weakly in $\mathfrak{W}^{s,p}(\Omega)$. Using the inequality 
\[
F(\tau)-F(t) \geq F'(t)(\tau-t),\quad \text{for all $t,\tau\in \mathbb{R}$}
\]
which follows from the convexity of $F$, we have the inequality 
\begin{equation}\label{Energy-convexity-inequality}
E(u_n+\phi) -E(u+\phi) \geq   \int_{\Omega}\int_{\Omega} {A(\bx, \by)\over \delta(\bx)^{\mu}} F'(u_\phi(\bx)-u_\phi(\by)) (u_n(\by)-u_n(\bx) - (u(\by) - u(\bx)))d\by d\bx  
\end{equation}
where $u_\phi = u + \phi\in \mathfrak{W}^{s,p}(\Omega). $
The expression in the right hand side can be rewritten as the action of the functional $\Phi(u)$ on the difference $u_n-u$ where for $u\in \mathfrak{W}^{s,p}(\Omega)$
 functional $\Phi(u)$ on  $\mathfrak{W}^{s,p}(\Omega)$ is defined as 
\[
\langle \Phi(u), v\rangle =  \int_{\Omega}\int_{\Omega} {A(\bx, \by)\over \delta(\bx)^{\mu}} F'(u_\phi(\bx)-u_\phi(\by)) (v(\by) - v(\bx))d\by d\bx.   
\]
$\Phi(u)$ is in fact in the dual space of $\mathfrak{W}^{s,p}(\Omega)$. Indeed, we only check its boundedness. For any $v\in \mathfrak{W}^{s,p}(\Omega)$, by the bound on the growth of $F'$ in \eqref{Convexity-of-F} and H\"older's inequality we have we have 
\[
\begin{split}
|\langle \Phi(u), v\rangle| &\leq c_1\int_{\Omega}\int_{\Omega} {A(\bx, \by)\over \delta(\bx)^{\mu}} |u_\phi(\bx)-u_\phi(\by)|^{p-1} |v(\by) - v(\bx)|d\by d\bx \\
&\leq c_1|u_\phi|_{\mathfrak{W}^{s,p}(\Omega)}^{p-1}|v|_{\mathfrak{W}^{s,p}(\Omega)}. 
\end{split}
\]Thus we can write \eqref{Energy-convexity-inequality} as 
\[
E(u_n) -E(u) \geq  \langle \Phi(u), u_n-u\rangle. 
\]
Now taking the liminf on both sides of the equation and using the weak convergence of $u_n$ to $u$ in $\mathfrak{W}^{s, p}(\Omega)$ we have that 
\[
E(u) \leq \liminf_{n\to \infty}E(u_n).
\]
Moreover, $E_f$ is coercive over $\mathfrak{{\mathring W}}^{s, p}(\Omega)$. In fact, from duality and the bound for $F$ from \eqref{Convexity-of-F} we have 
that {for any $u\in\mathfrak{{\mathring W}}^{s, p}(\Omega)$,}
\[
E_f(u) \geq \alpha_1c_1|u_\phi|^p_{\mathfrak{W}^{s, p}(\Omega)}- 
{
\|f\|_{
[\mathfrak{{W}}^{s, p}(\Omega)]^\ast}
\|u\|_{\mathfrak{W}^{s, p}(\Omega)} + \langle f, \phi\rangle
}
\]
Now using the trivial inequality $|u|^p_{\mathfrak{W}^{s,p}(\Omega)} \leq 2^{p-1}| u_\phi|^p_{\mathfrak{W}^{s, p}(\Omega)} +2^{p-1}|\phi|^p_{{\mathfrak{W}^{s, p}(\Omega)}}$, the Poincar\'e-type inequality,  {and Young's inequality for products}
\[
\begin{split}
E_f(u) &\geq c\|u\|^{p}_{\mathfrak{W}^{s, p}(\Omega)}
- C |\phi|^p_{{\mathfrak{W}^{s, p}}} 
 {-C_\varepsilon
   \| f\|_{[{\mathfrak{W}}^{s,p}]^\ast
  }^{p/(p-1)}-\varepsilon \| u \|_{\mathfrak{W}^{s,p}}^p}
+ \langle f, \phi\rangle\\
\end{split}
\]
 {
Let $\varepsilon=c/2$ in the above, we have
\[
E_f(u) \geq \frac{c}{2}\|u\|^{p}_{\mathfrak{W}^{s, p}(\Omega)} -C
\]
where the constants $c$ and $C$ are independent of $u$.}
Therefore, \cite[Theorem 1.2]{DCoV} is applicable to conclude that $E_f$ has attains its minimum in  {$K_{0}(p, h-\phi)$}.

The proof of Corollary \ref{obstaclep=2} follows from the above and 
the theorem of Stampacchia \cite[Theorem 5.6]{brezisbook}.  {In particular, corresponding to $f\in [\mathfrak{W}^{s, 2}(\Omega)]^\ast$, a unique solution $u\in K_\phi(2,h)$ exists satisfying the inequality \eqref{obstacle-inequality}}. 
\end{proof}

 \section{Conclusion}
 
 This work is a continuation of earlier studies on nonlocal models involving nonlocal interactions with a varying horizon. A major contribution is to extend existing results to more general and non-Hilbert space settings.
 This can be very useful in treating nonlinear problems as illustrated here through an application to nonlinear obstacle problem. The theory can also be useful in the design and analysis of numerical approximations of the variational problems \cite{TiDu14}.
 The main results here provide another
demonstration to 
 the regularity pick-up  due to the vanishing nonlocal horizon so that local boundary conditions can be imposed for nonlocal variational problems.
We anticipate that further extensions can be explored, such as analogous results for spaces involving more general localization strategies (instead of having the horizon parameter linearly proportional to the distance function to the boundary). For applications in mechanics, we may extend the study to spaces of vector fields, for example, by introducing heterogeneous localizations to spaces studied in \cite{MeDu14,MeDu15} and nonlocal analog of spaces with only control of local divergence rather than full gradient.  Moreover, it is interesting to explore systematically how the possibly heterogeneous spatial nonlocal interactions over a given domain can induce the effective interactions over a (possibly lower dimensional) subset.  The latter can lead to more mathematical studies as well as practical applications.\\

{\bf Acknowledgement.} The authors would like to thank the referees for their careful reading of the manuscript and  their constructive comments that helped improving the presentation of the work.

\bibliographystyle{unsrt}

 \end{document}